\documentclass{amsart}

\usepackage{latexsym}
\usepackage{amssymb, latexsym}
\usepackage{amsmath}
\usepackage{mathrsfs}
\usepackage{amsxtra}
\usepackage[balance]{diagrams}
\usepackage{amsthm}
\newtheorem{theorem}{Theorem}[section]
\newtheorem{corollary}[theorem]{Corollary}

\newtheorem{lemma}[theorem]{Lemma}

\newtheorem{proposition}[theorem]{Proposition}

\theoremstyle{definition}
\newtheorem{definition}[theorem]{Definition}
\newtheorem{question}[theorem]{Question}
\newtheorem{remark}[theorem]{Remark}

\newcommand{\p}{\mathbb{P}}
\newcommand{\q}{\mathbb{Q}}

\newcommand{\I}{\mathscr{I}}

\newcommand{\la}{\langle}
\newcommand{\ra}{\rangle}

\newcommand{\her}[1]{H_{{#1}^+}}

\newcommand{\lset}[1]{\langle L_{#1}[A],A\rangle}
\newcommand{\lsetex}[1]{\langle L_{#1}[A],A,\xi\rangle}

\newcommand{\Power}{\mathcal P}

\newcommand{\pl}{\emph{(}}
\newcommand{\pr}{\emph{)}}

\newcommand{\restrict}{\upharpoonright}
\newcommand{\forces}{\Vdash}

\newarrow{Corresponds}<--->
\newarrow{Dashto}{}{dash}{}{dash}>
\newarrow {Dotsto} .....

\newcommand{\tmop}[1]{\ensuremath{\operatorname{#1}}}
\newcommand{\tmtextit}[1]{{\itshape{#1}}}
\begin{document}
\author{Victoria Gitman}
\author{P.D. Welch}
\today
\address{New York City College of Technology (CUNY), 300 Jay Street,
Brooklyn, NY 11201 USA} \email{vgitman@nylogic.org}
\address{University of Bristol, School of Mathematics
University of Bristol Clifton, Bristol, BS8 1TW United
Kingdom}\email{P.Welch@bristol.ac.uk}
\thanks{The research of the first author has been supported by grants from the
CUNY Research Foundation.}
\title{Ramsey-like Cardinals II}
\maketitle
\begin{abstract}
This paper continues the study of the \emph{Ramsey-like} large
cardinals introduced in \cite{gitman:ramsey} and
\cite{welch:ramsey}. Ramsey-like cardinals are defined by
generalizing the characterization of Ramsey cardinals via the
existence of elementary embeddings. Ultrafilters derived from such
embeddings are fully iterable and so it is natural to ask about
large cardinal notions asserting the existence of ultrafilters
allowing only $\alpha$-many iterations for some countable ordinal
$\alpha$. Here we study such $\alpha$-\emph{iterable} cardinals. We
show that the $\alpha$-iterable cardinals form a strict hierarchy
for $\alpha\leq\omega_1$, that they are downward absolute to $L$ for
$\alpha<\omega_1^L$, and that the consistency strength of
Schindler's remarkable cardinals is strictly between 1-iterable and
2-iterable cardinals.

We show that the strongly Ramsey and super Ramsey cardinals from
\cite{gitman:ramsey} are downward absolute to the core model $K$.
Finally, we use a forcing argument from a strongly Ramsey cardinal
to separate the notions of Ramsey and \emph{virtually Ramsey}
cardinals. These were introduced in
\cite{welch:ramsey} as an upper bound on the consistency strength of
the Intermediate Chang's Conjecture.

\end{abstract}
\section{Introduction}
The definitions of measurable cardinals and stronger large cardinal
notions follow the template of asserting the existence of elementary
embeddings $j:V\to M$ from the universe of sets to a transitive
subclass with that cardinal as the critical point. Many large
cardinal notions below a measurable cardinal can be characterized by
the existence of elementary embeddings as well. The
characterizations of these smaller large cardinals $\kappa$ follow
the template of asserting the existence of elementary embeddings
\hbox{$j:M\to N$} with critical point $\kappa$ from a weak
$\kappa$-model or $\kappa$-model $M$ of set theory to a transitive
set.\footnote{It will be assumed throughout the paper that, unless
stated otherwise, all embeddings are elementary and between
transitive structures.} A \emph{weak} $\kappa$-\emph{model} $M$ of
set theory is a transitive set of size $\kappa$ satisfying ${\rm
ZFC}^-$ (ZFC without the \emph{Powerset Axiom}) and having
$\kappa\in M$. If a weak $\kappa$-model $M$ is additionally closed
under $<\kappa$-sequences, that is $M^{<\kappa}\subseteq M$, it is
called a $\kappa$-\emph{model} of set theory. Having embeddings on
$\kappa$-models is particularly important for forcing
indestructibility arguments, where the techniques rely on
$<\kappa$-closure. The weakly compact cardinal is one example of a
smaller large cardinal that is characterized by the existence of
elementary embeddings. A cardinal $\kappa$ is \emph{weakly compact}
if $\kappa^{<\kappa}=\kappa$ and every $A\subseteq\kappa$ is
contained in a weak $\kappa$-model $M$ for which there exists an
elementary embedding $j:M\to N$ with critical point $\kappa$.
Another example is the strongly unfoldable cardinal. A cardinal
$\kappa$ is \emph{strongly unfoldable} if for every ordinal
$\alpha$, every $A\subseteq\kappa$ is contained a weak
$\kappa$-model $M$ for which there exists an elementary embedding
$j:M\to N$ with critical point $\kappa$, $\alpha<j(\kappa)$, and
$V_\alpha\subseteq N$.

An embedding $j:V\to M$ with critical point $\kappa$ can be used to
construct a $\kappa$-complete ultrafilter on $\kappa$. These
measures are \emph{fully iterable}; they allow iterating the
ultrapower construction through all the ordinals. The iteration
proceeds by taking ultrapowers by the image of the original
ultrafilter at successor ordinal stages and direct limits at limit
ordinal stages to obtain a directed system of elementary embeddings
of \emph{well-founded} models of length $Ord$. Returning to smaller
large cardinals, an embedding $j:M\to N$ with critical point
$\kappa$ and $M$ a model of $\rm{ZFC}^-$ can be used to construct an
ultrafilter on $\Power(\kappa)^M$ that is $\kappa$-complete
\emph{from the perspective} of $M$. These small measures are called
$M$-\emph{ultrafilters} because all their measure-like properties
hold only from the perspective of $M$. It is natural to ask what
kind of iterations can be obtained from $M$-ultrafilters. Here,
there are immediate technical difficulties arising from the fact
that an $M$-ultrafilter is, in most interesting cases, external to
$M$. In order to start defining the iteration, the $M$-ultrafilter
needs to have the additional property of being \emph{weakly
amenable}. The existence of weakly amenable $M$-ultrafilters on
$\kappa$ with well-founded ultrapowers is equivalent to the
existence of embeddings $j:M\to N$ with critical point $\kappa$
where $M$ and $N$ have the \emph{same} subsets of $\kappa$. We will
call such embeddings $\kappa$-\emph{powerset preserving}.

Gitman observed in \cite{gitman:ramsey} that weakly compact
cardinals are not strong enough to imply the existence of
$\kappa$-powerset preserving embeddings. She called a cardinal
$\kappa$ \emph{weakly Ramsey} if every $A\subseteq\kappa$ is
contained in a weak $\kappa$-model for which there exists a
$\kappa$-powerset preserving elementary embedding $j:M\to N$. In
terms of consistency strength weakly Ramsey cardinals are above
completely ineffable cardinals and therefore much stronger than
weakly compact cardinals. We associate iterating $M$-ultrafilters
with Ramsey cardinals because Ramsey cardinals imply the existence
of fully iterable $M$-ultrafilters. Mitchell showed in
\cite{mitchell:ramsey} that $\kappa$ is Ramsey if and only if every
$A\subseteq\kappa$ is contained in a weak $\kappa$-model $M$ for
which there exists a weakly amenable countably complete\footnote{An
$M$-ultrafilter is \emph{countably complete} if every countable
collection of sets in the ultrafilter has a nonempty intersection
(see Section \ref{sec:preliminary}).} $M$-ultrafilter on $\kappa$.
Kunen showed in \cite{kunen:ultrapowers} that countable completeness
is a sufficient condition for an $M$-ultrafilter to be fully
iterable, that is, for every stage of the iteration to produce a
well-founded model. The \hbox{$\alpha$-iterable} cardinals were
introduced in \cite{gitman:ramsey} to fill the gap between weakly
Ramsey cardinals that merely assert the existence of
$M$-ultrafilters with the potential to be iterated and Ramsey
cardinals that assert the existence of fully iterable
$M$-ultrafilters. A cardinal $\kappa$ is $\alpha$-\emph{iterable} if
every subset of $\kappa$ is contained in a weak $\kappa$-model $M$
for which there exists an $M$-ultrafilter on $\kappa$ allowing an
iteration of length $\alpha$. By a well-known result of Gaifman
\cite{gaifman:ultrapowers}, an ultrafilter that allows an iteration
of length $\omega_1$ is fully iterable. So it only makes sense to
study the $\alpha$-iterable cardinals for $\alpha\leq\omega_1$.

Welch and Sharpe showed in \cite{welch:ramsey} that
$\omega_1$-iterable cardinals are strictly weaker than
$\omega_1$-Erd\H{o}s cardinals. In Section \ref{sec:iterable}, we
show that for $\alpha<\omega_1^L$, the $\alpha$-iterable cardinals
are downward absolute to $L$. In Section \ref{sec:hierarchy}, we
show that for $\alpha\leq\omega_1$,  the $\alpha$-iterable cardinals
form a hierarchy of strength. Also, in Section \ref{sec:hierarchy},
we establish a relationship between $\alpha$-iterable cardinals and
$\alpha$-Erd\H{o}s cardinals, and provide an improved upper bound on
the consistency strength of Schindler's remarkable cardinals by
placing it strictly between 1-iterable cardinals and 2-iterable
cardinals. Finally we answer a question of Gitman about whether
1-iterable cardinals imply existence of embeddings on weak
$\kappa$-models of ZFC.

Gitman also introduced \emph{strongly Ramsey} cardinals and
\emph{super Ramsey} cardinals by requiring the existence of
$\kappa$-powerset preserving embeddings on $\kappa$-models instead
of weak $\kappa$-models. The strongly Ramsey and super Ramsey
cardinals fit in between Ramsey cardinals and measurable cardinals
in strength. In Section \ref{sec:K}, we show that these two large
cardinal notions are downward absolute to the core model $K$. In
Section \ref{sec:vram}, we use a forcing argument starting from a
strongly Ramsey cardinal to separate the notions of virtually Ramsey
and Ramsey cardinals. \emph{Virtually Ramsey} cardinals were
introduced by Welch and Sharpe in \cite{welch:ramsey} as an upper
bound on the consistency of the Intermediate Chang's Conjecture.
\section{Preliminaries}\label{sec:preliminary}
In this section, we review facts about $M$-ultrafilters and formally
define the $\alpha$-iterable cardinals. We begin by giving a precise
definition of an $M$-ultrafilter.
\begin{definition}
Suppose $M$ is a transitive model of $\rm{ZFC}^-$ and $\kappa$ is a
cardinal in $M$. A set $U\subseteq \Power(\kappa)^M$ is an
$M$-\emph{ultrafilter} if $\la M,\in,U\ra\models ``U$ is a
$\kappa$-complete normal ultrafilter".
\end{definition}
Recall that an ultrafilter is $\kappa$-\emph{complete} if the
intersection of any $<\kappa$-sized collection of sets in the
ultrafilter is itself an element of the ultrafilter. An ultrafilter
is \emph{normal} if every function regressive on a set in the
ultrafilter is constant on a set in the ultrafilter. By definition,
$M$-ultrafilters are $\kappa$-complete and normal only from the
\emph{point of view} of $M$, that is, the collection of sets being
intersected or the regressive function has to be an element of $M$.
We will say that an $M$-ultrafilter is \emph{countably complete} if
every countable collection of sets in the ultrafilter has a nonempty
intersection. Obviously, any $M$-ultrafilter is, by definition,
countably complete from the point of view of $M$, but countable
completeness requests the property to hold of \emph{all} sequences,
not just those in $M$.\footnote{It is more standard for countable
completeness to mean $\omega_1$-completeness which requires the
intersection to be an element of the ultrafilter. However, the
weaker notion we use here is better suited to $M$-ultrafilters
because the countable collection itself can be external to $M$, and
so there is no reason to suppose the intersection to be an element
of $M$.} Unless $M$ satisfies some extra condition, such as being
closed under countable sequences, an $M$-ultrafilter need not be
countably complete. In this article we shall consider the usual
ultrapower of a structure $M$ taken using only functions in $M$. We
are thus not using fine-structural ultrapowers in our arguments. An
ultrapower by an $M$-ultrafilter is not necessarily well-founded. An
$M$-ultrafilter with a well-founded ultrapower may be obtained from
an elementary embedding $j:M\to N$.
\begin{proposition}\label{prop:genult}
Suppose $M$ is a weak $\kappa$-model and $j:M\to N$ is an elementary
embedding with critical point $\kappa$, then
$U=\{A\in\Power(\kappa)^M\mid \kappa\in j(A)\}$ is an
$M$-ultrafilter on $\kappa$ with a well-founded ultrapower.
\end{proposition}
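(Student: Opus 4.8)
The plan is to verify the two assertions—that $U$ is an $M$-ultrafilter in the sense of the definition, and that its ultrapower is well-founded—separately, since the first is essentially a diagram-chase through the elementarity of $j$ while the second exploits the actual well-foundedness of $N$.

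Let me think about what "$U$ is an $M$-ultrafilter" unwinds to. I need to check that $\langle M,\in,U\rangle\models ``U$ is a $\kappa$-complete normal ultrafilter''. The key technical point is that this is a statement whose quantifiers range over $M$, so I only ever test membership in $U$ against sets $A\in\Power(\kappa)^M$, and I may freely apply $j$ to such $A$ and to parameters in $M$.

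<br>

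First I would confirm $U$ is a filter on $\Power(\kappa)^M$. Since $\kappa$ is the critical point, $j(\kappa)=\kappa$ fails—rather $j$ fixes everything below $\kappa$ and $\kappa<j(\kappa)$—so $\kappa\in j(\kappa)\setminus\kappa$, giving $\kappa\in U$ (taking $A=\kappa$) and $\emptyset\notin U$. Upward closure and closure under intersection follow because $j(A\cap B)=j(A)\cap j(B)$ and $A\subseteq B$ implies $j(A)\subseteq j(B)$, both by elementarity. For the ultrafilter property, given $A\in\Power(\kappa)^M$ note $j(\kappa\setminus A)=j(\kappa)\setminus j(A)$, so $\kappa$ lands in exactly one of $j(A)$, $j(\kappa\setminus A)$; hence exactly one of $A,\kappa\setminus A$ is in $U$.

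<br>

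Next I would verify $\kappa$-completeness and normality, both interpreted inside $M$. For $\kappa$-completeness, suppose $\langle A_\xi\mid\xi<\delta\rangle\in M$ with $\delta<\kappa$ and each $A_\xi\in U$. Applying $j$ and using $j(\delta)=\delta$ and $j\restrict\delta=\mathrm{id}$, elementarity gives $j(\bigcap_{\xi<\delta}A_\xi)=\bigcap_{\xi<\delta}j(A_\xi)$, which contains $\kappa$ since each factor does; thus the intersection lies in $U$. For normality, let $f\in M$ be regressive on some $A\in U$, i.e. $f(\alpha)<\alpha$ for $\alpha\in A$. Then $j(f)$ is regressive on $j(A)\ni\kappa$, so $j(f)(\kappa)<\kappa$; since $j(f)(\kappa)=\gamma<\kappa$ is fixed by $j$, elementarity applied to the set $\{\alpha\in A\mid f(\alpha)=\gamma\}$ shows $\kappa\in j$ of it, so $f$ is constant with value $\gamma$ on a set in $U$.

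<br>

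Finally, for well-foundedness of the ultrapower, I would use the standard factor-map argument. Let $\mathrm{Ult}(M,U)$ be formed using functions $f\colon\kappa\to M$ with $f\in M$. Define $k\colon\mathrm{Ult}(M,U)\to N$ by $k([f]_U)=j(f)(\kappa)$. Los's theorem for $M$-ultrafilters gives that $k$ is well-defined and elementary, and the verification that it is well-defined and order-preserving again runs through the equivalence $[f]_U\in[g]_U \Leftrightarrow \{\alpha\mid f(\alpha)\in g(\alpha)\}\in U \Leftrightarrow \kappa\in j(\{\alpha\mid f(\alpha)\in g(\alpha)\}) \Leftrightarrow j(f)(\kappa)\in j(g)(\kappa)$. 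Since $N$ is transitive and hence its $\in$ is well-founded, the existence of the $\in$-embedding $k$ into $N$ forces the $\in$-relation of $\mathrm{Ult}(M,U)$ to be well-founded as well. I expect the main obstacle to be bookkeeping around \emph{weak amenability versus mere $M$-ultrafilter}: one must be careful that Los's theorem for the definability of $\mathrm{Ult}(M,U)$ goes through using only functions in $M$, and that the needed instances of elementarity only ever reference objects in $M$, so that no appeal to external countable completeness or to $U\in M$ is smuggled in.
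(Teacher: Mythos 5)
Your proposal is correct and follows exactly the route the paper intends: the ultrafilter, $\kappa$-completeness, and normality clauses are the standard elementarity computations (applied only to objects of $M$, as you note), and the well-foundedness is obtained from the factor embedding $k([f]_U)=j(f)(\kappa)$ into the transitive $N$, which is precisely the paper's one-line justification that the ultrapower ``embeds into $N$.'' The only cosmetic point is that you do not need full \L o\'{s}/elementarity of $k$ for well-foundedness—order-preservation on the membership relation, which holds for atomic formulas using only functions in $M$, already suffices.
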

In this case, we say that $U$ is \emph{generated by $\kappa$ via
  j}. The well-foundedness of the ultrapower follows since it embeds
into $N$.

To define $\alpha$-iterable cardinals, we will need the
corresponding key notion of $\alpha$-\emph{good} $M$-ultrafilters.
\begin{definition}
Suppose $M$ is a weak $\kappa$-model. An $M$-ultrafilter $U$ on
$\kappa$ is $0$-\emph{good} if the ultrapower of $M$ by $U$ is
well-founded.
\end{definition}

To begin discussing the iterability of $M$-ultrafilters, we need the
following key definitions.
\begin{definition}\label{d:wa}
Suppose $M$ is a weak $\kappa$-model. An $M$-ultrafilter $U$ on
$\kappa$ is \emph{weakly amenable} if for every $A\in M$ of size
$\kappa$ in $M$, the intersection $U\cap A$ is an element of $M$.
\end{definition}
\begin{definition}
Suppose $M$ is a model of $\rm{ZFC}^-$. An elementary embedding
\hbox{$j:M\to N$} with critical point $\kappa$ is
$\kappa$-\emph{powerset preserving} if $M$ and $N$ have the same
subsets of $\kappa$.
\end{definition}
It turns out that the existence of weakly amenable 0-good $M$-ultrafilters
on $\kappa$ is equivalent to the
existence of $\kappa$-powerset preserving embeddings.
\begin{proposition}\label{p:wa}Suppose $M$ is a transitive model of $\rm{ZFC}^-$.
\begin{itemize}
\item[(1)] If $j:M\to N$ is the ultrapower by a weakly amenable
$M$-ultrafilter on $\kappa$, then $j$ is $\kappa$-powerset
preserving.
\item[(2)] If $j:M\to N$ is a $\kappa$-powerset preserving embedding,
then the $M$-ultrafilter $U=\{A\in\Power(\kappa)^M\mid \kappa\in
j(A)\}$ is weakly amenable. \end{itemize}
\end{proposition}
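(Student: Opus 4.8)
The plan is to treat the two implications separately, in each case reducing the question to a single $\kappa$-indexed family of subsets of $\kappa$ lying in $M$, and then exploiting either weak amenability or $\kappa$-powerset preservation to locate the relevant set inside $M$.

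For part (1), let $j:M\to N$ be the ultrapower by a weakly amenable $M$-ultrafilter $U$. First I would dispose of the inclusion $\Power(\kappa)^M\subseteq\Power(\kappa)^N$, which needs nothing beyond $\mathrm{crit}(j)=\kappa$: for $A\subseteq\kappa$ in $M$ we have $A=j(A)\cap\kappa\in N$. The substantive direction is $\Power(\kappa)^N\subseteq\Power(\kappa)^M$. Given $B\in\Power(\kappa)^N$, represent it as $B=[f]_U$ with $f\in M$ a function on $\kappa$; since $B\subseteq\kappa=[\mathrm{id}]_U$, I may assume $f(\xi)\subseteq\xi$ for all $\xi<\kappa$. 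By the \L o\'{s} theorem, for each $\alpha<\kappa$ we have $\alpha\in B$ iff $A_\alpha:=\{\xi<\kappa\mid\alpha\in f(\xi)\}\in U$. The sequence $\la A_\alpha\mid\alpha<\kappa\ra$ lies in $M$ and enumerates a family $A\in M$ of size at most $\kappa$ in $M$, so weak amenability (padding to size exactly $\kappa$ if necessary) yields $U\cap A\in M$. Then $B=\{\alpha<\kappa\mid A_\alpha\in U\cap A\}$ is definable in $M$ from the sequence and from $U\cap A$, whence $B\in M$.

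For part (2), let $j:M\to N$ be $\kappa$-powerset preserving and let $U$ be the induced $M$-ultrafilter, which is an $M$-ultrafilter by Proposition \ref{prop:genult}. Fix $A\in M$ of size $\kappa$ in $M$; since $U\subseteq\Power(\kappa)^M$ I may discard the members of $A$ that are not subsets of $\kappa$ and fix in $M$ an enumeration $\la A_\alpha\mid\alpha<\kappa\ra$ of the rest, with range $A'$. Applying $j$ and using $j(\alpha)=\alpha$ for $\alpha<\kappa$, I get $j(\la A_\alpha\mid\alpha<\kappa\ra)(\alpha)=j(A_\alpha)$, so by definition of $U$ the set $X:=\{\alpha<\kappa\mid\kappa\in j(\la A_\alpha\mid\alpha<\kappa\ra)(\alpha)\}$ satisfies $\alpha\in X$ iff $A_\alpha\in U$. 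Now $X$ is definable in $N$ from $j(\la A_\alpha\mid\alpha<\kappa\ra)$ and $\kappa$, so $X\in\Power(\kappa)^N$; the crucial point is that $\kappa$-powerset preservation then puts $X\in\Power(\kappa)^M$. Finally $U\cap A=U\cap A'=\{A_\alpha\mid\alpha\in X\}$ is the image of $X$ under the enumeration, hence an element of $M$.

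I expect the main obstacle to be the bookkeeping in part (1): one must check that an arbitrary $N$-subset of $\kappa$ really is captured by the $\kappa$-indexed slice family $\la A_\alpha\mid\alpha<\kappa\ra$, that this family has size at most $\kappa$ in $M$ so that Definition \ref{d:wa} applies, and that $B$ is recoverable inside $M$ once $U\cap A$ is known. By contrast, part (2) is essentially immediate once one observes that the index set $X$ is forced into $\Power(\kappa)^N$ and hence, by powerset preservation, into $M$.
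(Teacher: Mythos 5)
Your proof is correct, and it is the standard argument for this fact (the paper itself gives no proof, deferring to Kanamori's text): in (1) you slice $B=[f]_U$ into the $M$-definable family $\la A_\alpha\mid\alpha<\kappa\ra$ of sections and recover $B$ from $U\cap\{A_\alpha\mid\alpha<\kappa\}$ via weak amenability, and in (2) you code $U\cap A$ by the index set $X\subseteq\kappa$, which $\kappa$-powerset preservation places in $M$. Both directions, including the easy inclusion $\Power(\kappa)^M\subseteq\Power(\kappa)^N$ and the use of normality to arrange $f(\xi)\subseteq\xi$, are handled exactly as in the standard reference, so there is nothing to add.
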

\begin{definition}
Suppose $M$ is a weak $\kappa$-model. An $M$-ultrafilter on $\kappa$
is 1-\emph{good} if it is 0-good and weakly amenable.
\end{definition}

\begin{lemma}\label{le:iterate}
Suppose $M$ is a weak $\kappa$-model, $U$ is a $1$-good
$M$-ultrafilter on $\kappa$, and $j:M\to N$ is the ultrapower by
$U$. Define
\begin{displaymath}
j(U)=\{A\in\Power( j(\kappa))^N\mid A=[f]\text{ and
}\{\alpha\in\kappa\mid f(\alpha)\in U\}\in U\}.
\end{displaymath}
Then $j(U)$ is a weakly amenable $N$-ultrafilter on $j(\kappa)$
containing $j''U$ as a subset.
\end{lemma}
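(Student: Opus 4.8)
The plan is to treat $j(U)$ as the natural ``pushforward'' of $U$ along the ultrapower and to verify each requirement directly, the recurring engine being the following consequence of weak amenability: for every $f\in M$ the set $X_f=\{\alpha<\kappa\mid f(\alpha)\in U\}$ is itself a member of $M$. Indeed $\mathrm{ran}(f)$ is a set of size at most $\kappa$ in $M$, so by weak amenability $W=U\cap\mathrm{ran}(f)\in M$ (padding $\mathrm{ran}(f)$ to size $\kappa$ if need be), and then $X_f=\{\alpha<\kappa\mid f(\alpha)\in W\}$ is definable in $M$ from $f$ and $W$. First I would record that every $A\in\Power(j(\kappa))^N$ has a representative $f\in M$ with $f(\alpha)\in\Power(\kappa)^M$ for $U$-almost every $\alpha$ (by \L o\'s's theorem applied to the ordinary ultrapower $j\colon M\to N$), so that the defining clause $X_f\in U$ is meaningful; well-definedness of $j(U)$ is then immediate, since if $[f]=[g]$ then $f$ and $g$ agree on a set in $U$, whence $X_f$ and $X_g$ agree on a set in $U$ and $X_f\in U\Leftrightarrow X_g\in U$.

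Next I would check $j''U\subseteq j(U)$ and the ultrafilter axioms. For $A\in U$ the value $j(A)$ is represented by the constant function $c_A$, and $X_{c_A}=\{\alpha\mid A\in U\}=\kappa\in U$, so $j(A)\in j(U)$. For the $N$-ultrafilter properties one transfers, through the representing functions, the corresponding properties of $U$ that hold in $\la M,\in,U\ra$. The Boolean and maximality clauses are the cleanest: given $[f]\in\Power(j(\kappa))^N$, the two sets $X_f$ and $X_{\kappa\setminus f}$ lie in $M$ by the engine above, they partition a set in $U$ because $U$ is an ultrafilter from the viewpoint of $M$, and hence exactly one is in $U$, giving maximality of $j(U)$ in $N$; closure under $N$-supersets and finite intersections is similar, using that $U$ is a $\kappa$-complete filter in $M$. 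The $j(\kappa)$-completeness and normality of $j(U)$ require more bookkeeping: a $<j(\kappa)$-indexed family, respectively a regressive function, in $N$ is represented by a single function in $M$ whose values are $<\kappa$-indexed families, respectively regressive functions, in $M$, and one applies the $\kappa$-completeness and normality of $U$ pointwise, again invoking the engine to keep the relevant index sets inside $M$.

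For weak amenability of $j(U)$ I would argue concretely. Let $B\in N$ with $|B|^N=j(\kappa)$ and fix $g\in M$ with $B=[g]$; then $A=\bigcup_{\alpha<\kappa}g(\alpha)$ is a set of size at most $\kappa$ in $M$, so $W=U\cap A\in M$ by weak amenability of $U$. Put $h(\alpha)=W\cap g(\alpha)$, a function in $M$ by separation in $M$. A short computation with \L o\'s's theorem shows that for any $[f]$ one has $\{\alpha\mid f(\alpha)\in h(\alpha)\}=\{\alpha\mid f(\alpha)\in g(\alpha)\}\cap X_f$, which lies in $U$ if and only if $[f]\in B$ and $X_f\in U$, that is, if and only if $[f]\in j(U)\cap B$. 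Hence $[h]=j(U)\cap B$, and in particular $j(U)\cap B\in N$, which is the weak amenability of $j(U)$.

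The step I expect to be the main obstacle is the honest verification of the $N$-ultrafilter axioms, specifically $j(\kappa)$-completeness and normality: here one must pass from a family or function living in $N$ to a single representing function in $M$ and then transfer the pointwise-valid instances of $U$'s completeness and normality, and at each such transfer one relies on weak amenability to guarantee that the sets defined using the external ultrafilter $U$ genuinely belong to $M$. Everything else is a direct unwinding of the definition of $j(U)$ together with the ultrafilter properties of $U$ in $\la M,\in,U\ra$.
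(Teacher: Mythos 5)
Your proof is correct and follows the standard argument the paper alludes to (the paper does not write out a proof but cites Kanamori and remarks that the content is a partial \L o\'{s} theorem for the structure $\la M,\in,U\ra$): your ``engine'' --- that $X_f=\{\alpha<\kappa\mid f(\alpha)\in U\}$ lands in $M$ via weak amenability applied to $U\cap\operatorname{ran}(f)$ --- is precisely that partial \L o\'{s} theorem for atomic formulas in the predicate $U$, and the rest of your verification (well-definedness, $j''U\subseteq j(U)$, the ultrafilter axioms transferred pointwise, and the explicit $[h]=j(U)\cap B$ computation for weak amenability) is the standard unwinding. No gaps.
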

See \cite{kanamori:higher} for details on the above facts. Lemma
\ref{le:iterate} is essentially saying that the weak amenability of
$U$ implies a partial  \L o\'{s} Theorem for the ultrapower of
\hbox{$\la M,\in, U\ra$} by $U$ resulting in $j(U)$, the predicate
corresponding to $U$ in the ultrapower, having the requisite
properties. The resulting ultrapower is fully elementary in the
language without the predicate for $U$ and $\Sigma_0$-elementary
with the predicate. This suffices since the main purpose in taking
the ultrapower in the extended language is to obtain the next
ultrafilter in the iteration. Weak amenability serves as the basis
of any fine structural analysis of measures and extenders
\cite{zeman:inner}. As we shall see later, it is not necessarily the
case that the ultrapower by $j(U)$ is well-founded.

Suppose $M$ is a weak $\kappa$-model and $U_0$ is a 1-good
$M$-ultrafilter on $\kappa$. Let $j(U_0)=U_1$ be the weakly amenable
ultrafilter obtained as above for the ultrapower of $M$ by $U$. If
the ultrapower by $U_1$ happens to be well-founded, we will say that
$U_0$ is 2-\emph{good}. In this way, we can continue iterating the
ultrapower construction so long as the ultrapowers are well-founded.
For $\xi\leq\omega$, we will say that $U$ is $\xi$-\emph{good} if
the first $\xi$-many ultrapowers are well-founded. Suppose next that
the first $\omega$-many ultrapowers are well-founded. We can form
their direct limit and ask if that is well-founded as well. If the
direct limit of the first $\omega$-many iterates turns out to be
well-founded, we will say that $U$ is $\omega+1$-\emph{good}.
Continuing the pattern, we make the following definition.
\begin{definition}
Suppose $M$ is a weak $\kappa$-model and $\alpha$ is an ordinal.  An
$M$-ultrafilter on $\kappa$ is $\alpha$-\emph{good}, if we can
iterate the ultrapower construction for $\alpha$-many steps.
\end{definition}
Gaifman showed in \cite{gaifman:ultrapowers} that to be able to
iterate the ultrapower construction through all the ordinals it
suffices to know that we can iterate through all the countable
ordinals.
\begin{theorem}\label{th:gaifman}
Suppose $M$ is a weak $\kappa$-model. An $\omega_1$-good
$M$-ultrafilter is $\alpha$-good for every ordinal $\alpha$.
\end{theorem}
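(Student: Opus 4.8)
The plan is to follow Gaifman's original strategy: show that any failure of well-foundedness among the iterates can be reflected down to a countable stage, where it would contradict $\omega_1$-goodness. Throughout, write $M_0 = M$, let $M_{\alpha+1}$ be the ultrapower of $M_\alpha$ by $U_\alpha = j_{0\alpha}(U)$, take direct limits at limit stages, and let $j_{\alpha\beta}:M_\alpha\to M_\beta$ denote the iteration maps and $\kappa_\alpha = j_{0\alpha}(\kappa)$ the critical sequence. Since an $\omega_1$-good $M$-ultrafilter is in particular $1$-good, hence weakly amenable, Lemma \ref{le:iterate} guarantees that the construction proceeds and that each $U_\alpha$ is a weakly amenable $M_\alpha$-ultrafilter, so all the maps and predicates used below are well defined.

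First I would establish the \emph{representation lemma}: every element of $M_\alpha$ has the form $j_{0\alpha}(f)(\kappa_{\xi_1},\ldots,\kappa_{\xi_n})$ for some $n<\omega$, ordinals $\xi_1<\cdots<\xi_n<\alpha$, and a function $f\in M$ of $n$ variables. This goes by induction on $\alpha$, using the characterization of ultrapower elements as $[g]$ at successor steps and the definition of the direct limit at limit steps, together with the fact that $j_{\alpha,\alpha+1}$ fixes every $\kappa_\xi$ below its critical point. Weak amenability is what makes the predicate $U_\alpha$ transform correctly under the iteration maps, via the partial \L o\'{s} theorem of Lemma \ref{le:iterate}, so that the induction closes.

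The key step, and the main obstacle, is the \emph{indiscernibility (factoring) lemma}: the critical sequence behaves as a set of order-indiscernibles, in the sense that for any $C\subseteq\alpha$ of order type $\bar\alpha$, the set $H_C$ of elements representable as $j_{0\alpha}(f)(\kappa_{\xi_1},\ldots,\kappa_{\xi_n})$ with all $\xi_i\in C$ is closed under the structure, and the canonical map sending $j_{0\bar\alpha}(f)(\kappa_{\eta_1},\ldots,\kappa_{\eta_n})$ to $j_{0\alpha}(f)(\kappa_{c(\eta_1)},\ldots,\kappa_{c(\eta_n)})$, where $c:\bar\alpha\to C$ is the order isomorphism, is a well-defined isomorphism of $M_{\bar\alpha}$ onto $H_C$. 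The crux is to verify that, for terms built from the $\kappa_\xi$ and functions in $M$, the truth of an atomic formula $j_{0\alpha}(f)(\vec\kappa)\in j_{0\alpha}(g)(\vec\kappa)$ depends only on the relative order of the critical points occurring; this is proved by an induction that reduces such a statement, via normality and the partial \L o\'{s} theorem, to a corresponding measure-one statement in $M$ about the diagonal intersection, independent of which particular critical points were substituted. I expect this order-indiscernibility verification to be the technically delicate part, since one must track carefully how the weakly amenable predicates $U_\alpha$ interact across successor and limit stages.

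Granting these two lemmas, the conclusion is quick. Suppose toward a contradiction that some $M_\alpha$ is ill-founded, and fix a descending $\in$-sequence $\langle x_m : m<\omega\rangle$ in $M_\alpha$. By the representation lemma each $x_m$ involves only finitely many critical points; let $C\subseteq\alpha$ collect all their indices, so that $C$ is countable and $\bar\alpha := \mathrm{ot}(C)<\omega_1$. Every $x_m$ then lies in $H_C$, so pulling the sequence back along the isomorphism $M_{\bar\alpha}\cong H_C$ of the factoring lemma yields a descending $\in$-sequence in $M_{\bar\alpha}$; hence $M_{\bar\alpha}$ is ill-founded, contradicting $\omega_1$-goodness. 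Therefore every $M_\alpha$ is well-founded, i.e. $U$ is $\alpha$-good for every ordinal $\alpha$.
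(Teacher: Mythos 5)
The paper gives no proof of this theorem---it is quoted as a classical result of Gaifman from \cite{gaifman:ultrapowers}---and your sketch is a correct reconstruction of Gaifman's standard argument: representation of every element of $M_\alpha$ as $j_{0\alpha}(f)(\kappa_{\xi_1},\ldots,\kappa_{\xi_n})$ with $f\in M$, order-indiscernibility of the critical sequence yielding $H_C\cong M_{\mathrm{ot}(C)}$, and reflection of any descending $\in$-sequence into a countable iterate. You correctly identify the one point where the $M$-ultrafilter setting differs from the full-measure setting, namely that weak amenability and the partial \L o\'{s} theorem of Lemma \ref{le:iterate} are what make the product-ultrafilter computation for the indiscernibility step go through; nothing further is needed.
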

Thus, the study of $\alpha$-good ultrafilters only makes sense for
$\alpha\leq\omega_1$.
\begin{definition}
For $\alpha\leq\omega_1$, a cardinal $\kappa$ is
$\alpha$-\emph{iterable} if every $A\subseteq\kappa$ is contained in
a weak $\kappa$-model $M$ for which there exists an $\alpha$-good
$M$-ultrafilter on $\kappa$.
\end{definition}
A few easy observations about the definition are in order.
\begin{remark}\label{rem:iterablecards}$\,$\\
\begin{itemize}
\item[(1)] If $\kappa^{<\kappa}=\kappa$, then $\kappa$ is 0-iterable if and
only if $\kappa$ is weakly compact. Without the extra assumption
$\kappa^{<\kappa}=\kappa$, being 0-iterable is not necessarily a
large cardinal notion. Hamkins showed in \cite{hamkins:book} that it
is consistent for $2^\omega$ to be 0-iterable.
\item[(2)] Weakly Ramsey cardinals are exactly the
$1$-iterable cardinals. Unlike\break 0-iterability, 1-iterability
implies inaccessibility and hence weak compactness (see
\cite{gitman:ramsey} for the strength of 1-iterable cardinals).
\item[(3)] By our previous comments, Ramsey cardinals
are $\omega_1$-iterable.
\item[(4)] $\omega_1$-iterable
cardinals are \emph{strongly unfoldable} in $L$ (see
\cite{villaveces:unfoldable}).
\end{itemize}
\end{remark}
\section{$\alpha$-iterable cardinals in $L$}\label{sec:iterable}
In this section, we show that for $\alpha<\omega_1^L$, the
$\alpha$-iterable cardinals are downward absolute to $L$. This
result is optimal since $\omega_1$-iterable cardinals cannot exist
in $L$.

Many of our arguments below will use the following two simple facts
about weak $\kappa$-models.
\begin{remark}\label{rem:model}$\,$\\
\begin{itemize}
\item[(1)] If $M$ is a weak $\kappa$-model of height $\alpha$, then
$L^M=L_\alpha$. Note that $M\cap L$ can be a proper superset of
$L_\alpha$. That is, $M$ might contain constructible elements that
it does not realize are constructible.
\item[(2)] If $M$ is a weak $\kappa$-model, $j:M\to N$ is an elementary embedding with critical point
$\kappa$,
and $X$ has size $\kappa$ in $M$, then $j\restrict X$ is an element
of $N$. This follows since $j\restrict X$ is definable from an
enumeration $f$ of $X$ in $M$ together with $j(f)$, both of which
are elements of $N$.
\end{itemize}
\end{remark}
Next, we give an argument why $\omega_1$-iterable cardinals cannot
exist in $L$.
\begin{proposition}
If there is an $\omega_1$-iterable cardinal, then $0^\#$ exists.
\end{proposition}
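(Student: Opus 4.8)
The plan is to run the classical argument that an iterable ultrafilter produces Silver indiscernibles for $L$, now carried out for a set-sized model. Since $\kappa$ is $\omega_1$-iterable, I apply the definition (say to $A=\emptyset$) to fix a weak $\kappa$-model $M$ together with an $\omega_1$-good $M$-ultrafilter $U$ on $\kappa$. By Theorem~\ref{th:gaifman}, $U$ is then $\alpha$-good for every ordinal $\alpha$, so I may iterate the ultrapower construction through all the ordinals, using Lemma~\ref{le:iterate} to carry the ultrafilter forward at successor stages and taking direct limits at limits. This produces a directed system of well-founded (hence, after collapse, transitive) models $M_\beta$ with fully elementary embeddings $j_{\beta\gamma}\colon M_\beta\to M_\gamma$ and critical points $\kappa_\beta=j_{0\beta}(\kappa)$, satisfying $j_{\beta\gamma}(\kappa_\beta)=\kappa_\gamma$ and $j_{\beta\gamma}\restrict\kappa_\beta=\mathrm{id}$.

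Next I would transfer everything to $L$. By Remark~\ref{rem:model}(1) each iterate computes $L^{M_\beta}=L_{\lambda_\beta}$, where $\lambda_\beta=\mathrm{Ord}\cap M_\beta$, and since each $M_\beta$ is transitive these $L_{\lambda_\beta}$ are genuine initial segments of the real $L$. As $\lambda_\beta>\kappa_\beta\geq\beta$, the heights are cofinal in $\mathrm{Ord}$, so $L=\bigcup_\beta L_{\lambda_\beta}$ and the maps $j_{\beta\gamma}$ restrict to elementary embeddings $L_{\lambda_\beta}\to L_{\lambda_\gamma}$ whose direct limit is $L$. I then take $I=\{\kappa_\beta:\beta\in\mathrm{Ord}\}$ as the candidate class of indiscernibles; unboundedness and closure of $I$ are immediate from $\kappa_\beta\geq\beta$ and the continuity of the critical sequence at limits.

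The core is to verify that $I$ is a class of order-indiscernibles for $L$ and that the associated Ehrenfeucht--Mostowski set $\Sigma=\{\varphi:L\models\varphi(\kappa_{\delta_1},\dots,\kappa_{\delta_n})\text{ for }\delta_1<\dots<\delta_n\}$ is well-founded, unbounded, and remarkable, for such a $\Sigma$ is precisely $0^\#$. Indiscernibility follows from the homogeneity of linear iterations: because each $j_{\delta\gamma}$ has critical point $\kappa_\delta$, fixes the earlier critical points, and sends $\kappa_\delta$ to $\kappa_\gamma$, the standard argument shows that the truth value in $L$ of a fixed formula on an increasing tuple from $I$ depends only on the length of the tuple. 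Well-foundedness for every index order type is inherited from the well-foundedness of the iterates guaranteed by Theorem~\ref{th:gaifman}, and unboundedness holds because $I$ is a proper class.

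The step I expect to be the genuine obstacle is \emph{remarkability} of $\Sigma$: that any Skolem term in the indiscernibles whose value lies below one of them is independent of the larger indiscernibles. This is exactly where the \emph{normality} of the $M$-ultrafilter (and hence of each image measure supplied by Lemma~\ref{le:iterate}) is essential, since normality forces every ordinal of $M_\gamma$ below $\kappa_\delta$ to lie in the range of $j_{\delta\gamma}$ and so to be named without reference to the later critical points; weak amenability (Definition~\ref{d:wa}, Proposition~\ref{p:wa}) underwrites the whole construction by ensuring the image ultrafilters exist so that the iteration is well-defined. Once $\Sigma$ is shown to be well-founded, unbounded, and remarkable, the existence of $0^\#$ follows by the standard Silver analysis, for which I would refer to \cite{kanamori:higher}.
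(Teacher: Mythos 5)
Your strategy—iterate through the ordinals and read off the critical points as Silver indiscernibles—is the classical Kunen argument, but it does not transfer to this setting as written, and the step that fails is the one you treat as routine, not the one you flag as the obstacle. The homogeneity of a linear iteration gives indiscernibility of the critical points \emph{relative to the iterates}: the shift maps show that for a fixed $\varphi$ the truth value of $\varphi(\kappa_{\delta_1},\dots,\kappa_{\delta_n})$ computed in $L^{M_\gamma}=L_{\lambda_\gamma}$ is independent of $\gamma>\delta_n$ and of the choice of indices. In Kunen's original argument one iterates a proper class, every iterate of $L$ is $L$ itself, and this immediately yields indiscernibility \emph{in} $L$. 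Here $M$ has size $\kappa$, each $L_{\lambda_\gamma}$ is a proper initial segment of $L$ of height below $(\kappa_\gamma^+)^V$, and there is no reason that $L_{\lambda_\gamma}\prec L$; truth along an unbounded chain of initial segments does not converge to truth in $L$. So your definition of $\Sigma$ via satisfaction in $L$, and the claim that ``the standard argument shows'' $I$ is a class of indiscernibles for $L$, is a genuine gap—establishing that some club class is a class of indiscernibles for $L$ is essentially the conclusion you are after. The standard repair is Kunen's criterion that $0^\#$ exists if and only if \emph{some} $L_\lambda$, $\lambda$ a limit ordinal, admits an uncountable set of indiscernibles: using Theorem~\ref{th:gaifman} to form the $\omega_1$-st iterate, the first $\omega_1$ critical points are indiscernibles for $L^{M_{\omega_1}}$ by the shift-map argument, and the well-foundedness/remarkability analysis is then carried out for that set structure (where your remarks about normality are in the right spirit).

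For contrast, the paper's proof sidesteps all of this. It uses the class-length iteration only to absorb $\Power(\kappa)^L$ into $M$: since $\Power(\kappa)^{M_\alpha}=\Power(\kappa)^M$ for all $\alpha$ while $L^{M_\alpha}=L_{\mathrm{ht}(M_\alpha)}$ grows unboundedly, eventually $\Power(\kappa)^L\subseteq M_\alpha$ and hence $L_{(\kappa^+)^L}\subseteq M$. A single ultrapower map then restricts to a nontrivial elementary embedding of $L_{(\kappa^+)^L}$ into an initial segment of $L$ with critical point $\kappa$, and the standard criterion for $0^\#$ from such an embedding finishes the proof in a few lines. To salvage your write-up you should either route through Kunen's set-indiscernible criterion as above or switch to the embedding criterion as the paper does.
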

\begin{proof}
Suppose $\kappa$ is an $\omega_1$-iterable cardinal. Fix a weak
$\kappa$-model $M$ and an \hbox{$\omega_1$-good} $M$-ultrafilter $U$
on $\kappa$. By Theorem \ref{th:gaifman}, $U$ is fully iterable. Let
$j_\alpha:M_\alpha\to N_\alpha$ be the $\alpha^{\text{th}}$-iterated
ultrapower of $U$. Observe that
$\Power(\kappa)^M=\Power(\kappa)^{M_\alpha}$ for all $\alpha$. By
remark \ref{rem:model} (1), $M_\alpha\cap L$ contains $L_\alpha$.
Thus, for a large enough $\alpha$, we have
$\Power(\kappa)^L\subseteq L_\alpha\subseteq M_\alpha$. It follows
that $L_{\kappa^+}\subseteq M$. Thus, $j_\alpha$ restricts to an
embedding on $L_{\kappa^+}$ and hence $0^\#$ exists.
\end{proof}
We will first show that if $0^\#$ exists, then the Silver
indiscernibles are $\alpha$-iterable in $L$ for all
$\alpha<\omega_1^L$. Later, we will modify this argument to show
that for $\alpha<\omega_1^L$, the $\alpha$-iterable cardinals are
downward absolute to $L$. We begin with the case of 1-iterable
cardinals. We will make use of a standard lemma below (see
\cite{welch:codingtheuniverse} for a proof).
\begin{lemma}\label{le:indheight}
If $0^\#$ exists and $\kappa$ is a Silver indiscernible, then
$\tmop{cf}^V((\kappa^+)^L)=\omega$.
\end{lemma}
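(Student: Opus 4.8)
The plan is to run the standard theory of Silver indiscernibles supplied by $0^\#$. Recall that the class $I$ of indiscernibles is closed and unbounded, that every element of $L$ is of the form $t^L(\vec\imath)$ for an $L$-Skolem term $t$ and a finite increasing tuple $\vec\imath$ from $I$, and that every indiscernible is inaccessible (indeed far more) in $L$. Writing $\kappa^+$ for $(\kappa^+)^L$, I would first record that $\kappa^+$ lies strictly below the next indiscernible $\kappa'$ above $\kappa$: since $\kappa'$ is inaccessible, hence a limit cardinal, in $L$, there is an $L$-cardinal strictly between $\kappa$ and $\kappa'$, so $\kappa^+<\kappa'$.

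Next I would show that every ordinal $\alpha<\kappa^+$ has the form $t^L(\vec\gamma,\kappa)$ for an $L$-Skolem term $t$ and a finite increasing tuple $\vec\gamma$ of indiscernibles below $\kappa$, so that no indiscernibles above $\kappa$ are needed. Starting from a general representation $\alpha=t^L(\vec\gamma,\kappa,\vec\delta)$ with $\vec\gamma<\kappa<\vec\delta$ from $I$, the point is that $\alpha<\kappa^+<\delta_1$, where $\delta_1$ is the least entry of $\vec\delta$; by the standard indiscernibility principle that a term value lying strictly below the least indiscernible of the top block does not depend on that block, $\alpha$ lies in the Skolem hull of $(I\cap\kappa)\cup\{\kappa\}$ and so has the desired form.

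The heart of the argument is then a boundedness step, carried out one term at a time. For a term $t$ with $k$ lower argument places, consider the function $\tilde G\colon\kappa^k\to\mathrm{Ord}$ given by $\tilde G(x_1,\dots,x_k)=t^L(x_1,\dots,x_k,\kappa)$ on increasing tuples (and $0$ elsewhere); this is $L$-definable from the single parameter $\kappa$, hence $\tilde G\restriction\kappa^k\in L$. By the previous paragraph every $\alpha<\kappa^+$ produced by $t$ lies in $\mathrm{ran}(\tilde G)$, while $\mathrm{ran}(\tilde G)$ has $L$-cardinality at most $(\kappa^k)^L=\kappa<\kappa^+$; since $\kappa^+$ is regular in $L$, the ordinal $s_t:=\sup\{\,t^L(\vec\gamma,\kappa):\vec\gamma\in[I\cap\kappa]^k,\ t^L(\vec\gamma,\kappa)<\kappa^+\,\}$ satisfies $s_t<\kappa^+$. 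As there are only countably many Skolem terms and every $\alpha<\kappa^+$ is bounded by some $s_t$, we obtain $\kappa^+=\sup_t s_t$, a supremum of countably many ordinals each below $\kappa^+$; since $\kappa^+$ is a limit ordinal this gives $\tmop{cf}^V(\kappa^+)\le\omega$, hence $\tmop{cf}^V(\kappa^+)=\omega$.

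The main obstacle is precisely this boundedness step: one must keep the values $t^L(\vec\gamma,\kappa)$ bounded below $\kappa^+$ even though there may be $\kappa$-many tuples $\vec\gamma$ below $\kappa$, so a naive count only yields cofinality at most $|I\cap\kappa|$. The device that reduces the cofinality all the way to $\omega$ is to dominate the indiscernible-values by the range of the genuinely $L$-definable function $\tilde G$ on all of $\kappa^k$ and then invoke regularity of $\kappa^+$ \emph{inside} $L$. The elimination of the upper indiscernibles via $\kappa^+<\kappa'$ is exactly what permits $\tilde G$ to be taken with the sole parameter $\kappa$, and thus to live in $L$.
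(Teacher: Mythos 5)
The paper itself does not prove this lemma\emdash it is quoted as standard with a citation to \cite{welch:codingtheuniverse}\emdash so your argument can only be measured against the standard one, which it essentially reproduces. The core mechanism is exactly right: dominate the values of each Skolem term on tuples of indiscernibles below $\kappa$ by the range of a genuinely $L$-definable function on all of $[\kappa]^k$, invoke the regularity of $(\kappa^+)^L$ \emph{inside} $L$ to bound that range below $(\kappa^+)^L$, and then use the countability of the set of terms to get cofinality $\omega$. You also correctly flag why the naive count fails and why one must pass to an honest $L$-function.

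There is, however, a non sequitur in your second paragraph. Remarkability gives that if $t^L(\vec\gamma,\kappa,\vec\delta)<\min\vec\delta$ then the value is unchanged when $\vec\delta$ is replaced by any other increasing tuple of indiscernibles above $\max\vec\gamma$; it does \emph{not} follow that the value lies in the Skolem hull of $(I\cap\kappa)\cup\{\kappa\}$, since every representation of it as a term value still mentions \emph{some} upper indiscernibles. Independence of the choice of upper block is not definability without an upper block: there is no first-order way, inside $L$, to express ``the common value of $t(\vec\gamma,\kappa,\vec z)$ as $\vec z$ ranges over indiscernible tuples.'' Fortunately the step is unnecessary. Fix once and for all the indiscernibles $\kappa<\delta_1<\delta_2<\cdots$ immediately above $\kappa$; by remarkability every $\alpha<(\kappa^+)^L$ (which is below $\delta_1$, by your first paragraph) equals $t^L(\vec\gamma,\kappa,\delta_1,\dots,\delta_n)$ for some term $t$, some $n$, and some $\vec\gamma\in[I\cap\kappa]^{<\omega}$. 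Now define $\tilde G$ with the ordinals $\delta_1,\dots,\delta_n$ as additional parameters: these are elements of $L$, so $\tilde G\restrict\kappa^k$ is still a set in $L$ whose range has $L$-cardinality at most $\kappa$, and the rest of your argument goes through verbatim. In particular your closing remark that eliminating the upper indiscernibles is ``exactly what permits $\tilde G$ to live in $L$'' reflects a misconception\emdash ordinal parameters above $\kappa$ are perfectly good parameters for an $L$-definition\emdash and the lemma's truth does not hinge on that elimination.
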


\begin{theorem}\label{th:ind}
If\/ $0^\#$ exists, then the Silver indiscernibles are $1$-iterable
in $L$.
\end{theorem}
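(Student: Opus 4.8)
The plan is to reduce the statement, via Proposition \ref{p:wa}, to producing the relevant embeddings inside $L$: for each $A\subseteq\kappa$ with $A\in L$ it suffices to build, in $L$, a weak $\kappa$-model $M$ with $A\in M$ together with a $\kappa$-powerset preserving embedding $\bar\jmath\colon M\to N$ with $M,N,\bar\jmath\in L$; the generated ultrafilter $U=\{X\in\Power(\kappa)^M\mid\kappa\in\bar\jmath(X)\}$ is then $1$-good in $L$ by Proposition \ref{prop:genult} and Proposition \ref{p:wa}. The raw material is the existence, from $0^\#$, of an elementary embedding $j\colon L\to L$ with critical point $\kappa$ sending $\kappa$ to the next Silver indiscernible. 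The difficulty is that $j\notin L$, so the entire point of the argument is to cut $j$ down to a set-sized restriction that does live in $L$; here I would lean on Remark \ref{rem:model}(2), which guarantees that once $M$ and $N$ are the (constructible) transitive collapses of suitable hulls, the relevant pieces of $\bar\jmath$ are elements of $N$, and hence of $L$.

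The heart of the construction is the choice of $M$. One cannot simply take $M=L_\beta$: an initial segment of $L$ of size $\kappa$ has $\Power(\kappa)^{L_\beta}=\Power(\kappa)\cap L_\beta$, and since new subsets of $\kappa$ appear cofinally below $(\kappa^+)^L$, no such $M$ can be $\kappa$-powerset preserving under an embedding with critical point $\kappa$. The escape is to build $M$ as a transitive model of $\mathrm{ZFC}^-$ that does \emph{not} satisfy $V=L$, whose collection $\Power(\kappa)^M$ is a carefully chosen family of size $\kappa$ that is matched exactly by $N$. This is where Lemma \ref{le:indheight} enters: using $\mathrm{cf}^V((\kappa^+)^L)=\omega$, I would write $(\kappa^+)^L$ as the supremum of an increasing $\omega$-sequence and construct $M$ as the union of an $\omega$-chain of collapsed Skolem hulls $M_0\subseteq M_1\subseteq\cdots$, arranging at each stage that $A\in M_0$, that $\kappa\subseteq M_n$, and that the $U$-trace of every $\kappa$-sized family in $M_n$ is thrown into $M_{n+1}$. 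The last clause is precisely what delivers weak amenability of $U$, since every $\kappa$-sized $B\in M$ already lies in some $M_n$, whence $U\cap B\in M_{n+1}\subseteq M$.

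For $0$-goodness I would verify well-foundedness of the ultrapower on the $V$-side and then transfer it downward. In $V$ the club of Silver indiscernibles below $\kappa$ makes $U$ countably complete: any countable subfamily of $U$ shares a common tail of indiscernibles, which is nonempty because $\kappa$ is uncountable, so the ultrapower is well-founded; well-foundedness is absolute, so it holds in $L$ as well. It is worth noting that this is exactly the point at which the argument stops short of higher iterability: the generating club of indiscernibles is not available in $L$, so $U$ is \emph{not} countably complete in $L$, in keeping with the impossibility of $\omega_1$-iterable cardinals there.

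The main obstacle, and the step I expect to absorb most of the work, is the simultaneous control of two competing demands on $M$: it must be small enough ($|M|=\kappa$) to be a weak $\kappa$-model, yet rich enough that $\Power(\kappa)^M=\Power(\kappa)^N$ so that $\bar\jmath$ is $\kappa$-powerset preserving. Threading this needle forces $M$ to be a non-$L_\beta$ model and makes essential use of the cofinality-$\omega$ decomposition of $(\kappa^+)^L$; verifying along the way that $M$, $N$, $U$, and $\bar\jmath$ are all genuinely constructible---despite being read off from the non-constructible embedding $j$---is the remaining delicate bookkeeping, to be handled through Remark \ref{rem:model} and the absoluteness to $L$ of the relevant properties.
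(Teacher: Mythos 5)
Your outline assembles the right ingredients (the shift embedding $j\colon L\to L$, the restriction to $L_{(\kappa^+)^L}$, the cofinality-$\omega$ decomposition from Lemma \ref{le:indheight}, the interleaving of the traces $U\cap M_n$ for weak amenability, and countable completeness for well-foundedness), but it has a genuine gap at the one step that carries the whole proof: getting the final object \emph{into} $L$. Your $\omega$-chain $M_0\subseteq M_1\subseteq\cdots$ is constructed in $V$ by repeatedly consulting $U$ and $j$, neither of which is in $L$. Remark \ref{rem:model}(2) and weak amenability do give you that each individual $M_n$, each trace $U\cap M_n$, and each restriction $j\restrict M_n$ lies in $L$; but they say nothing about the \emph{sequence} $\la M_n\mid n\in\omega\ra$, and an $\omega$-sequence of constructible sets need not be constructible --- indeed the cofinal $\omega$-sequence in $(\kappa^+)^L$ you start from is certainly not in $L$, since $(\kappa^+)^L$ is regular there. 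So your $M=\bigcup_n M_n$ and the ultrafilter $\bigcup_n(U\cap M_n)$ are, as constructed, objects of $V$ with no argument that they belong to $L$. The paper closes exactly this gap by defining \emph{in $L$} a tree of finite approximations to such a chain (finite coherent sequences of embeddings $h_i\colon L_{\gamma_i}\to L_{\delta_i}$ with the derived ultrafilters interleaved into later models), observing that the $V$-constructed chain witnesses that this tree is ill-founded, and invoking absoluteness of well-foundedness to obtain a branch of the tree inside $L$; the union of that branch is the desired weak $\kappa$-model and $1$-good ultrafilter. Without some device of this kind, what you call the ``remaining delicate bookkeeping'' is not bookkeeping but the entire theorem.

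A secondary point: your claim that the witnessing model cannot be an initial segment of $L$ is false, and it steers you toward an unnecessarily strong target. The paper's witness is $L_\gamma$ for $\gamma=\sup_i\gamma_i$, an honest initial segment of $L$. What is needed is not that the ambient embedding $h\colon L_\gamma\to L_\delta$ be $\kappa$-powerset preserving (it is not), but only that the derived ultrafilter be weakly amenable over $L_\gamma$ with well-founded ultrapower; weak amenability is secured by interleaving the traces into the $L_{\gamma_{i+1}}$, and Proposition \ref{p:wa}(1) then makes the ultrapower map itself $\kappa$-powerset preserving. Your countable-completeness argument for well-foundedness is correct, though the paper gets the same conclusion more cheaply from the factor embedding into $L_\delta$.
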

\begin{proof}
Let $I=\{i_\xi\mid \xi\in Ord\}$ be the Silver indiscernibles
enumerated in increasing order. Fix $\kappa\in I$ and let
$\lambda=(\kappa^+)^L$. Define $j:I\to I$ by $j(i_\xi)=i_\xi$ for
all $i_\xi<\kappa$ and $j(i_\xi)=i_{\xi+1}$  for all
$i_\xi\geq\kappa$ in $I$. The map $j$ extends, via the Skolem
functions, to an elementary embedding $j:L\to L$ with critical point
$\kappa$. Restrict to $j:L_\lambda\to L_{j(\lambda)}$, which is
clearly $\kappa$-powerset preserving.
Let $U$ be the weakly amenable $L_\lambda$-ultrafilter generated by
$\kappa$ via $j$ as in Proposition \ref{prop:genult}. Since every
$\alpha<\lambda$ has size $\kappa$ in $L_\lambda$, by weak
amenability, $U\cap L_\alpha$ is an element of $L_\lambda$.
Construct, using Lemma \ref{le:indheight}, a sequence $\la
\lambda_i:i\in\omega\ra$ cofinal in $\lambda$, such that each
$L_{\lambda_i}\prec L_\lambda$, and $U\cap
L_{\lambda_i},L_{\lambda_i}\in L_{\lambda_{i+1}}$. Let $j_i$ be the
restriction of $j$ to $L_{\lambda_i}$. Each $j_i:L_{\lambda_i}\to
L_{j(\lambda_i)}$ is an element of $L$ by remark \ref{rem:model}
(2), since it has size $\kappa$ in $L_\lambda$. These observations
motivate the construction below.

To show that $\kappa$ is 1-iterable in $L$, for every
$A\subseteq\kappa$ in $L$, we need to construct in $L$ a weak
$\kappa$-model $M$ containing $A$ and a 1-good $M$-ultrafilter on
$\kappa$. Fix $A\subseteq\kappa$ in $L$. Define in $L$, the tree $T$
of finite sequences of the form
\begin{displaymath}
s=\la h_0:L_{\gamma_0}\to L_{\delta_0},\ldots,\break
h_n:L_{\gamma_n}\to L_{\delta_n}\ra
\end{displaymath}
ordered by extension and satisfying the properties:
\begin{itemize}
\item[(1)] $A\in L_{\gamma_0}\models\rm{ZFC}^-$,
\item[(2)] $h_i:L_{\gamma_i}\to L_{\delta_i}$ is an elementary
embedding with critical point $\kappa$,
\item[(3)] $\delta_i<j(\lambda)$.
\end{itemize}
Let $W_i$ be the $L_{\gamma_i}$-ultrafilter generated by $\kappa$
via $h_i$. Then:
\begin{itemize}
\item[(4)] for $i<j\leq n$, we have $L_{\gamma_i},W_i\in
L_{\gamma_j}$, $L_{\gamma_i}\prec L_{\gamma_j}$, $L_{\delta_i}\prec
L_{\delta_j}$, and $h_j$ extends $h_i$.
\end{itemize}
We view the sequences $s$ as better and better approximations to the
embedding we are trying to build.

Consider the sequences
\begin{displaymath}
s_n=\la j_0:L_{\lambda_0}\to
L_{j(\lambda_0)},\ldots,j_n:L_{\lambda_n}\to L_{j(\lambda_n)}\ra.
\end{displaymath}
Clearly each $s_n$ is an element of $T$ and \hbox{$\la
s_n:n\in\omega\ra$} is a branch through $T$ in $V$. Hence the tree
$T$ is ill-founded. By absoluteness, it follows that $T$ is
ill-founded in $L$ as well. Let $\{h_i:L_{\gamma_i}\to
L_{\delta_i}:i\in \omega\}$ be a branch of $T$ in $L$ and $W_i$ be
the $L_{\gamma_i}$-ultrafilters as above. Let
\begin{displaymath}
h=\cup_{i\in\omega} h_i,\, L_\gamma=\cup_{i\in\omega}
L_{\gamma_i},\, L_\delta=\cup_{i\in\omega} L_{\delta_i}, \text{ and
} W=\cup_{i\in\omega} W_i.
\end{displaymath}
It is clear that $h:L_\gamma\to L_\delta$ is an elementary embedding
with critical point $\kappa$ and $W$ is a weakly amenable
$L_\gamma$-ultrafilter generated by $\kappa$ via $h$. Since the
ultrapower of $L_\gamma$ by $W$ is a factor embedding of $h$, it
must be well-founded.

We have now found a weak $\kappa$-model $L_\gamma$ containing $A$
for which there exists a 1-good $L_\gamma$-ultrafilter on
$\kappa$. This completes the proof
that $\kappa$ is 1-iterable.
\end{proof}
The next lemma will allow us to modify the proof of Theorem
\ref{th:ind} to show that 1-iterable cardinals are downward absolute
to $L$.
\begin{lemma}\label{le:countheight}
If $\kappa$ is a $1$-iterable cardinal, then every
$A\subseteq\kappa$ is contained in a weak $\kappa$-model $M$ for
which there exists a $1$-iterable $M$-ultrafilter $U$ on $\kappa$
satisfying the conditions:
\begin{itemize}
\item[(1)] $M=\cup_{n\in\omega}M_n$,
\item[(2)] $M_n,M_n\cap U\in M_{n+1}$,
\item[(3)] for $i<j$, we have $M_i\prec M_j$,
\item[(4)] $M\models$ ``I am $\her{\kappa}$" \emph{(}every set has transitive closure of size at most $\kappa$\emph{)}.
\end{itemize}
\end{lemma}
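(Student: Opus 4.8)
The plan is to follow the architecture of the proof of Theorem~\ref{th:ind}, replacing the canonical levels $L_{\lambda_i}$ by an $\omega$-chain of elementary submodels extracted from a single $1$-iterability witness. First I would apply $1$-iterability to fix a weak $\kappa$-model $N$ with $A\in N$ together with a $1$-good $N$-ultrafilter $U$ on $\kappa$; by Proposition~\ref{p:wa} this amounts to fixing a $\kappa$-powerset preserving embedding $j\colon N\to\bar N$ with $U=\{X\in\Power(\kappa)^N\mid\kappa\in j(X)\}$. I would then arrange that $N$ computes $\mathbb H:=\her{\kappa}^{N}$ as a \emph{set}, and work below $\mathbb H$. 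The model $\mathbb H$ is transitive, satisfies $\mathrm{ZFC}^-$ and ``I am $\her{\kappa}$'', has $A\in\mathbb H$ and $\Power(\kappa)^{\mathbb H}=\Power(\kappa)^N$, and $U$ remains a $1$-good $\mathbb H$-ultrafilter: weak amenability passes down because for $Y\in\mathbb H$ of size $\kappa$ the set $U\cap Y$ lies in $N$ and has hereditary size $\kappa$, hence $U\cap Y\in\mathbb H$, while $0$-goodness passes down because the ultrapower of $\mathbb H$ by $U$ embeds into that of $N$. The essential gain is that $\mathbb H\in N$, so the size-$\kappa$ elementary substructures of $\mathbb H$ are themselves elements of $\mathbb H$.

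Next I would build, by an external recursion using $U$, an increasing sequence $\langle X_n\mid n\in\omega\rangle$ of elementary substructures $X_n\prec\mathbb H$, each of size $\kappa$ with $\kappa\cup\{A\}\subseteq X_0$, each an element of $\mathbb H$, and satisfying $X_n,\,U\cap X_n\in X_{n+1}$. The two membership requirements are precisely what weak amenability delivers: since $X_n\in\mathbb H$ has size $\kappa$, weak amenability of $U$ gives $U\cap X_n\in\mathbb H$, so both $X_n$ and $U\cap X_n$ are available in $\mathbb H$ and may be absorbed into the next elementary substructure $X_{n+1}$.

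Finally I would set $X=\bigcup_n X_n\prec\mathbb H$, let $\pi\colon X\to M$ be the transitive collapse (which fixes $\kappa$ since $\kappa\subseteq X$), and put $M_n=\pi[X_n]$ and $U'=\pi[U\cap X]$. Then $M=\bigcup_n M_n$ is a transitive weak $\kappa$-model with $A\in M$; by elementarity $M\models$ ``I am $\her{\kappa}$'', giving~(4); the chain is elementary, giving~(3); and applying $\pi$ to $X_n,U\cap X_n\in X_{n+1}$ yields $M_n,\,U'\cap M_n\in M_{n+1}$, giving~(1) and~(2). It remains to check that $U'$ is a $1$-good $M$-ultrafilter: weak amenability is immediate from~(2), and $0$-goodness follows because the ultrapower of $M$ by $U'$ is a factor embedding of the transitive collapse of $j\restriction X$, hence well-founded, exactly as in the closing step of Theorem~\ref{th:ind}.

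The hard part will be securing both element-hood conditions in~(2) at once while keeping the limit model transitive. Membership $M_n\in M_{n+1}$ forces the approximating models to be genuine sets inside the next model, which is the reason one descends to $\mathbb H=\her{\kappa}^{N}$ (a set in $N$) rather than working with external submodels of $N$; the condition $U'\cap M_n\in M_{n+1}$ then rests squarely on weak amenability of the global $U$. Verifying that the transitive collapse preserves both relations, and that the collapsed ultrafilter $U'$ remains $0$-good, is the delicate bookkeeping the argument must carry out.
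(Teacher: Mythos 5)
Your overall architecture is the right one --- an $\omega$-chain of size-$\kappa$ elementary substructures with $X_n$ and $U\cap X_n$ absorbed into $X_{n+1}$ via weak amenability, and well-foundedness of the final ultrapower obtained from a factor embedding --- but the step on which everything rests, ``arrange that $N$ computes $\mathbb{H}=\her{\kappa}^{N}$ as a set,'' is not available. Having $\her{\kappa}^{N}\in N$ is equivalent over ${\rm ZFC}^-$ to $N\models``\Power(\kappa)$ exists,'' and Theorem~\ref{th:zfc} of this paper shows that if every $A\subseteq\kappa$ could be placed in such an $N$ carrying a $1$-good $N$-ultrafilter, then $\kappa$ would be a limit of $1$-iterable cardinals. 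The least $1$-iterable cardinal is not such a limit, so for it your ``arrangement'' fails outright; indeed, whether $1$-iterability yields witnesses satisfying more of ZFC is precisely the question this paper answers in the negative. Everything downstream --- that the $X_n$ are elements of $\mathbb{H}$ and hence available to be placed into $X_{n+1}$ --- depends on $\mathbb{H}$ being a set of $N$, so the gap is not cosmetic.

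The paper's proof threads this needle differently. One first shrinks the witness to its hereditary part, so that $M'\models$ ``I am $\her{\kappa}$'' (your observation that $U$ restricts to a $1$-good ultrafilter on this part is the same move). The key point is then that, because the ultrapower $h\colon M'\to N'$ is $\kappa$-powerset preserving, $M'$ is exactly the class $\her{\kappa}$ \emph{as computed in the target} $N'$. The chain of submodels is therefore built inside $N'$, not inside the domain model: one takes transitive $M_n\prec\her{\kappa}^{N'}$ of size $\kappa$ in $N'$; each such $M_n$, and each $U'\cap M_n$ by weak amenability, then lies in $\her{\kappa}^{N'}=M'$ itself, which is what secures condition (2) without ever requiring $\her{\kappa}^{M'}$ to be a set of $M'$. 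A small bonus of this route is that the $M_n$ are already transitive (any $X\prec\her{\kappa}$ with $\kappa\subseteq X$ is transitive, since every set there is a surjective image of $\kappa$), so no transitive collapse is needed at the end; the union $M=\bigcup_n M_n$ with $U=\bigcup_n (U'\cap M_n)$ is the desired model, and its ultrapower is well-founded because it embeds into $N'$.
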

\begin{proof}
Fix $A\subseteq\kappa$ and find a weak $\kappa$-model $M'$
containing $A$ with a 1-good $M'$-ultrafilter $U'$ on $\kappa$. Let
$h:M'\to N'$ be the ultrapower embedding by $U'$. We can assume
without loss of generality that $M'\models$ ``I am $\her{\kappa}$"
by taking $\{B\in M'\mid$ $M'\models$ transitive closure of $B$ has
size $\leq\kappa\}$ instead of $M'$ and restricting the embedding
accordingly. Since $M'\models$ ``I am $\her{\kappa}$" and $h$ is
$\kappa$-powerset preserving, it follows that
$M'=\her{\kappa}^{N'}$. In $N'$, let $M_0$ be a transitive
elementary submodel of $\her{\kappa}$ of size $\kappa$ containing
$A$. Since $U'$ is weakly amenable, it follows that $U_0=M_0\cap U'$
is an element of $\her{\kappa}^{N'}$. Let $M_1$ be a transitive
elementary submodel of $\her{\kappa}$ of size $\kappa$ containing
$M_0$ and $U_0$ and let $U_1=M_1\cap U'$. Again, $U_1$ is clearly an
element of $\her{\kappa}^{N'}$. Inductively let $M_{n+1}$ be a
transitive elementary submodel of $\her{\kappa}$ of size $\kappa$
containing $M_n$ and $U_n$ and $U_{n+1}=M_{n+1}\cap U'$. Let
$M=\cup_{n\in\omega} M_n$ and $U=\cup_{n\in\omega} U_n$. Clearly $U$
is a weakly amenable $M$-ultrafilter and the ultrapower of $M$ by
$U$ is well-founded as it embeds into $N$.
\end{proof}

\begin{theorem}\label{th:1iterabledown}
If $\kappa$ is $1$-iterable, then $\kappa$ is $1$-iterable in $L$.
\end{theorem}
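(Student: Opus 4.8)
The plan is to reproduce the proof of Theorem \ref{th:ind}, with the global embedding $j\colon L\to L$ coming from the indiscernibles replaced by the constructible part of an ultrapower furnished by $1$-iterability in $V$. Fix $A\subseteq\kappa$ with $A\in L$ and put $\lambda=(\kappa^+)^L$; then $A\in L_\beta$ for some $\beta<\lambda$. Coding $\langle L_\beta,\in,A\rangle$ as a subset $A^*\subseteq\kappa$ and applying Lemma \ref{le:countheight} to $A^*$, I first obtain in $V$ a weak $\kappa$-model $M\models$ ``I am $\her{\kappa}$'' with $A^*\in M$ and a $1$-good $M$-ultrafilter $U$ with ultrapower $h\colon M\to\tilde N$. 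Decoding $A^*$ inside $M$ gives $L_\beta\in M$ realized as constructible, so $A\in L^M$. Since $h$ is $\kappa$-powerset preserving and $M\models$ ``I am $\her{\kappa}$'', we get $M=\her{\kappa}^{\tilde N}$, hence $\mu:=\mathrm{Ord}\cap M=(\kappa^+)^{\tilde N}$ and $L^M=L_\mu\ni A$.

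Next I isolate the constructible embedding $\hat h=h\restriction L_\mu\colon L_\mu\to L_{\mu'}$ (with $\mu'=\mathrm{Ord}\cap\tilde N$), which is elementary with critical point $\kappa$. The key point is that $\hat h$ is $\kappa$-powerset preserving: inside $\tilde N$ every constructible subset of $\kappa$ lies in $L_{(\kappa^+)^L}$ and $(\kappa^+)^L\le(\kappa^+)^{\tilde N}=\mu$, so $\Power(\kappa)^{L_{\mu'}}\subseteq L_\mu$ and therefore $\Power(\kappa)^{L_\mu}=\Power(\kappa)^{L_{\mu'}}$. Setting $\nu=\min(\mu,\lambda)$, the restriction $\hat h\restriction L_\nu$ remains $\kappa$-powerset preserving, so by Proposition \ref{p:wa}(2) the generated ultrafilter $\bar U=\{B\in\Power(\kappa)^{L_\nu}\mid\kappa\in\hat h(B)\}=U\cap\Power(\kappa)^{L_\nu}$ is weakly amenable to $L_\nu$; note $A\in L_\nu$ since $A\in L_\mu$ and $A\in L_\lambda$.

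Now the tree argument of Theorem \ref{th:ind} applies almost verbatim. In $L$ I define the tree $T$ of finite coherent sequences $\langle h_i\colon L_{\gamma_i}\to L_{\delta_i}\rangle$ with $A\in L_{\gamma_0}\models\mathrm{ZFC}^-$, each $h_i$ elementary with critical point $\kappa$, $\delta_i<\delta^*:=\sup\hat h{}''\nu$, and the usual coherence $L_{\gamma_i},W_i\in L_{\gamma_j}$, $L_{\gamma_i}\prec L_{\gamma_j}$, $L_{\delta_i}\prec L_{\delta_j}$, $h_j\supseteq h_i$, where $W_i$ is generated by $\kappa$ via $h_i$. To see $T$ is ill-founded in $V$, I build an increasing $\langle\gamma_i\rangle$ below $\nu$ with $L_{\gamma_i}\prec L_\nu$, $A\in L_{\gamma_0}$ and $L_{\gamma_i},\bar U\cap L_{\gamma_i}\in L_{\gamma_{i+1}}$; weak amenability of $\bar U$ puts each trace into $L_\nu$, and since $\nu$ is a limit the chain can be continued below $\nu$ with no cofinality hypothesis. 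The point that makes the branch land in $T$ is that each $h_i:=\hat h\restriction L_{\gamma_i}$ lies in $L$: as $\gamma_i<\lambda$ there is a constructible enumeration $f\colon\kappa\to L_{\gamma_i}$, and because $\hat h$ maps into $L_{\mu'}\subseteq L$ the value $\hat h(f)$ is constructible, so $h_i=\{(f(\xi),\hat h(f)(\xi))\mid\xi<\kappa\}\in L$ and the derived $W_i$ is definable from $h_i$ in $L$.

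Since well-foundedness of a set tree is absolute, $T$ is ill-founded in $L$; a branch $\langle h_i\colon L_{\gamma_i}\to L_{\delta_i}\rangle\in L$ yields by union an elementary $h\colon L_\gamma\to L_\delta$ and a weakly amenable $L_\gamma$-ultrafilter $W=\bigcup_i W_i$ whose ultrapower is a factor of $h$ and hence well-founded, so $W$ is $1$-good. It remains to bound $\gamma$, and here I would invoke the nonexistence of a measurable cardinal in $L$: if $\gamma\ge\lambda$ then $\Power(\kappa)^{L_\gamma}=\Power(\kappa)^L$ and $W\in L$ would be a genuinely $\kappa$-complete normal ultrafilter on all of $\Power(\kappa)^L$, making $\kappa$ measurable in $L$. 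Thus $\gamma<\lambda$, $L_\gamma$ is a weak $\kappa$-model of $L$ containing $A$ with $1$-good ultrafilter $W$, and as $A$ was arbitrary, $\kappa$ is $1$-iterable in $L$. The main obstacle, and the sole genuine departure from Theorem \ref{th:ind}, is precisely forcing the branch into $L$: the ambient embedding $h$ is not an embedding of $L$ into $L$, so its restrictions a priori live only in $\tilde N$; restricting to $L_\nu$ with $\nu\le\lambda$ (so the submodels have constructible enumerations) together with $\hat h$ mapping into $L^{\tilde N}\subseteq L$ is what renders the restricted maps and their ultrafilters constructible, while the $\kappa$-powerset preservation of $\hat h$, read off from $\tilde N$'s internal value of $\kappa^+$, keeps the traces weakly amenable.
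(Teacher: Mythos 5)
Your overall strategy is the paper's: restrict the ultrapower embedding supplied by $1$-iterability to the constructible parts of the models and then rerun the tree/absoluteness argument of Theorem \ref{th:ind}. Your computation that $\hat h\colon L_\mu\to L_{\mu'}$ is $\kappa$-powerset preserving (via $\Power(\kappa)^{L_{\mu'}}\subseteq L_{((\kappa^+)^L)^{\tilde N}}\subseteq L_\mu$) is correct. But there is a genuine gap at the step where you force the branch into $L$, and it is exactly the point where the paper splits into cases on $0^\#$. The tree argument needs two things beyond powerset preservation: (i) the traces $W_i=\bar U\cap L_{\gamma_i}$ must be \emph{constructible}, which you get from weak amenability of $\bar U$ over $L_\nu$ --- but Definition \ref{d:wa} only applies to sets of size $\kappa$ \emph{in the sense of} $L_\nu$, so you need $\gamma_i<(\kappa^+)^{L_\nu}$; and (ii) the restrictions $h_i=\hat h\restrict L_{\gamma_i}$ must be constructible via Remark \ref{rem:model}(2), which requires an enumeration $f\colon\kappa\to L_{\gamma_i}$ lying in the \emph{domain} of $\hat h$, i.e.\ in $L_\nu\subseteq L_\mu$, so that $\hat h(f)$ is even defined. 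Your patch --- ``as $\gamma_i<\lambda$ there is a constructible enumeration $f$'' --- produces an $f\in L_\lambda$, but when $\mu<\lambda$ (which is \emph{always} the case if $0^\#$ does not exist, since then $\lambda=\kappa^+>\mathrm{Ord}^M$) there is no reason for $f$ to belong to $L_\mu$, and $\hat h(f)$ is undefined. Both (i) and (ii) come down to knowing $(\kappa^+)^{L_\nu}=\nu$, i.e.\ that $L_\nu$ itself sees that each of its proper initial segments has size $\kappa$. Your argument establishes $\Power(\kappa)^{L_{\mu'}}\subseteq L_\mu$ but not this stronger fact, and $((\kappa^+)^L)^{\tilde N}<\mu$ is a priori possible.

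This is precisely what the paper's dichotomy supplies. If $0^\#$ does not exist, then $V_{j(\kappa)}^N$ satisfies ``$0^\#$ does not exist'' and the Covering Lemma applied there gives $(\kappa^+)^{L_\beta}=\alpha=\mathrm{Ord}^M$; hence $L_\alpha=\her{\kappa}^{L_\beta}$, every $\gamma<\alpha$ is collapsed to $\kappa$ inside $L_\alpha$, and both (i) and (ii) go through. If $0^\#$ exists, the paper does not argue through $M$ at all but quotes Theorem \ref{th:ind} directly ($\kappa$ is then a Silver indiscernible and one works with $\nu=\lambda=(\kappa^+)^L$, where $(\kappa^+)^{L_\lambda}=\lambda$ holds trivially --- indeed this is the one sub-case, $\lambda\leq\mu$, in which your uniform argument does succeed). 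To repair your proof you would either have to reinstate this case distinction and the covering computation, or otherwise prove $(\kappa^+)^{L_\nu}=\nu$; the uniform argument as written does not do so. (Your closing observation that $\gamma<\lambda$ because $L$ has no measurable is fine but peripheral; the paper implicitly bounds $\gamma$ the same way through the tree's requirement that the $W_i$ be constructible.)
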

\begin{proof}
Observe that if $0^\#$ exists, the theorem follows from Theorem
\ref{th:ind} since all uncountable cardinals of $V$ are among the
Silver indiscernibles. So suppose $0^\#$ does not exist. In $L$, fix
$L_\xi$ of size $\kappa$. Choose a weak $\kappa$-model $M$
containing $L_\xi$ and $V_\kappa$ for which there exists a
1-iterable $M$-ultrafilter $U$ and let $j:M\to N$ be the ultrapower
embedding. It is easy to see that $\kappa$ is weakly compact in $N$,
and hence in $V_{j(\kappa)}^N\models$ ZFC. Since $V_{j(\kappa)}^N$
knows that $0^\#$ does not exist, it must satisfy that
$(\kappa^+)^L=\kappa^+$. Restrict to $j:L_\alpha\to L_\beta$ where
$\alpha$ and $\beta$ are the heights of $M$ and $N$ respectively. By
the observation above, $(\kappa^+)^{L_\beta}=\alpha$ and hence the
restriction is $\kappa$-powerset preserving. Note also, that by
Lemma \ref{le:countheight}, we can assume that
$\tmop{cf}^V(\alpha)=\omega$. Therefore the embedding $j:L_\alpha\to
L_\beta$ has exactly the same properties as the embedding with the
Silver indiscernible as the critical point. So we can proceed as in
the proof of Theorem \ref{th:ind} to construct a weak $\kappa$-model
containing $L_\xi$ and a 1-iterable ultrafilter for it in $L$.
\end{proof}
The next lemma is a simple observation that will prove key to
generalizing the arguments above for $\alpha$-iterable cardinals.
Let us say that
\begin{displaymath}
 \{j_{\xi\gamma}:M_\xi\to
M_\gamma\mid\break\xi<\gamma<\alpha\}
\end{displaymath}
is a \emph{good commuting system of elementary embeddings of length
$\alpha$} if:
\begin{itemize}
\item[(1)] for all $\xi_0<\xi_1<\xi_2<\alpha$, $j_{\xi_1\xi_2}\circ
j_{\xi_0\xi_1}=j_{\xi_0\xi_2}$.
\end{itemize}
Let $\kappa_\xi$ be the critical point of $j_{\xi\xi+1}$ and let
$U_\xi$ be the $M_\xi$-ultrafilter generated by $\kappa_\xi$ via
$j_{\xi\xi+1}$, then:
\begin{itemize}
\item [(2)] for all $\xi<\beta<\alpha$, if $A\in M_\xi$ and $A\subseteq U_\xi$, then
$j_{\xi\beta}(A)\subseteq U_{\beta}$.
\end{itemize}
\begin{remark}
The directed system of embeddings resulting from the iterated
ultrapowers construction is a good commuting system of elementary
embeddings.
\end{remark}
The next lemma shows that existence of good commuting systems of
elementary embeddings of length $\alpha$ is basically equivalent to
existence of $\alpha$-good ultafilters.
\begin{lemma}\label{le:iterable}
Suppose $\{j_{\xi\gamma}:M_\xi\to M_\gamma:\xi<\gamma<\alpha\}$ is a
good commuting system of elementary embeddings of length $\alpha$.
Suppose further that $m_0=\cup_{i\in\omega}m^{(i)}_0$ is a
transitive elementary submodel of $M_0$ such that $m_0^{(i)}\prec
m_0^{(i+1)}$ and $U_0\cap m_0^{(i)}$, $m_0^{(i)}\in m_0^{(i+1)}$.
Then $u_0=m_0\cap U_0$ is an $\alpha$-good $m_0$-ultrafilter.
\end{lemma}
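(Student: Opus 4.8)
The plan is to run the iterated ultrapower construction on $(m_0,u_0)$ and to certify well-foundedness at every stage by building, alongside the iteration, a commuting family of elementary embeddings $k_\xi$ from the $\xi$-th iterate into the already-transitive model $M_\xi$. Concretely, write $\pi_{\gamma\xi}\colon P_\gamma\to P_\xi$ for the iteration embeddings, set $P_0=m_0$, let $u^{(0)}=u_0=m_0\cap U_0$, and let $k_0\colon P_0\to M_0$ be the inclusion, which is elementary since $m_0\prec M_0$. Two preliminary points are routine. First, $u_0$ is genuinely an $m_0$-ultrafilter: $\kappa_0$-completeness and normality from the point of view of $m_0$ hold because any witnessing sequence lies in $m_0\subseteq M_0$ and $U_0$ has these properties from the point of view of $M_0$, while the relevant intersection or homogeneous set is computed inside $m_0$. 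Second, $u_0$ is weakly amenable: given $A\in m_0$ of size $\kappa_0$ in $m_0$ we have $A\in m_0^{(i)}$ for some $i$, and then $u_0\cap A=(U_0\cap m_0^{(i)})\cap A$ is an element of $m_0$ because $U_0\cap m_0^{(i)}\in m_0^{(i+1)}\subseteq m_0$ by hypothesis. Thus $u_0$ is a weakly amenable $m_0$-ultrafilter, so the iteration can begin.

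The induction maintains, for each $\xi<\alpha$, an elementary embedding $k_\xi\colon P_\xi\to M_\xi$ together with the \emph{realization invariant}: for every $A\in\Power(\pi_{0\xi}(\kappa_0))^{P_\xi}$ one has $A\in u^{(\xi)}$ if and only if $k_\xi(A)\in U_\xi$, and $k_\xi\circ\pi_{\gamma\xi}=j_{\gamma\xi}\circ k_\gamma$ for all $\gamma<\xi$. Since each $M_\xi$ is transitive, the invariant forces $P_\xi$ to be well-founded, which is exactly what $\alpha$-goodness demands. At a successor I form $P_{\xi+1}=\mathrm{Ult}(P_\xi,u^{(\xi)})$ and define $k_{\xi+1}([f]_{u^{(\xi)}})=j_{\xi\xi+1}(k_\xi(f))(\kappa_\xi)$; well-definedness and elementarity of $k_{\xi+1}$, and the commutation $k_{\xi+1}\circ\pi_{\xi\xi+1}=j_{\xi\xi+1}\circ k_\xi$, reduce via the realization invariant to the fact that $U_\xi$ is generated by $\kappa_\xi$ through $j_{\xi\xi+1}$, so a set lies in $U_\xi$ exactly when $\kappa_\xi$ enters its image. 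Once $k_{\xi+1}$ certifies that $P_{\xi+1}$ is well-founded, the $m_0$-ultrafilter $u^{(\xi)}$ is $1$-good and Lemma \ref{le:iterate} supplies the weakly amenable image ultrafilter $u^{(\xi+1)}$ needed to continue. At a limit $\lambda$ I take the direct limit $P_\lambda=\varinjlim_{\xi<\lambda}P_\xi$ and set $k_\lambda(\pi_{\xi\lambda}(a))=j_{\xi\lambda}(k_\xi(a))$; this is well-defined precisely because the $j_{\xi\gamma}$ commute, i.e.\ by clause (1) of the definition of a good commuting system, and again the transitivity of $M_\lambda$ yields well-foundedness of $P_\lambda$.

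The crux is propagating the realization invariant across the successor step, and this is exactly what clause (2) of a good commuting system is designed to handle. Writing $B=[f]\in u^{(\xi+1)}$ via Lemma \ref{le:iterate} means $S=\{\eta<\pi_{0\xi}(\kappa_0)\mid f(\eta)\in u^{(\xi)}\}\in u^{(\xi)}$; here $S$ is an element of $P_\xi$ because weak amenability makes $u^{(\xi)}\cap\mathrm{ran}(f)$ a member of $P_\xi$. Applying $k_\xi$ and the invariant at stage $\xi$ turns $S$ into $k_\xi(S)=\{\eta\mid k_\xi(f)(\eta)\in U_\xi\cap\mathrm{ran}(k_\xi(f))\}$, where the set $U_\xi\cap\mathrm{ran}(k_\xi(f))$ is an element of $M_\xi$ contained in $U_\xi$. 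Clause (2) then gives $j_{\xi\xi+1}(U_\xi\cap\mathrm{ran}(k_\xi(f)))\subseteq U_{\xi+1}$, and chasing $\kappa_\xi\in j_{\xi\xi+1}(k_\xi(S))$ through this containment shows $k_{\xi+1}(B)=j_{\xi\xi+1}(k_\xi(f))(\kappa_\xi)\in U_{\xi+1}$. The reverse implication follows since $u^{(\xi+1)}$ and $U_{\xi+1}$ are ultrafilters and $k_{\xi+1}$ respects complements, so the invariant is re-established at $\xi+1$; a similar but easier directed argument re-establishes it at limits. I expect this successor bookkeeping, and the parallel verification that weak amenability is preserved along the whole iteration (at successors via Lemma \ref{le:iterate}, at limits via directedness), to be the main technical obstacle; everything else is the standard factoring of an iterated ultrapower through a fixed commuting tower of embeddings. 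Running the construction through all $\xi<\alpha$ produces an $\alpha$-step iteration of $(m_0,u_0)$ with well-founded models, so $u_0$ is $\alpha$-good.
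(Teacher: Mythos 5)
Your overall architecture is exactly the paper's: iterate $(m_0,u_0)$, define factor maps $k_\xi\colon P_\xi\to M_\xi$ by the recursion $k_{\xi+1}([f])=j_{\xi\xi+1}(k_\xi(f))(\kappa_\xi)$ together with direct limits, and read off well-foundedness from the embeddings into the transitive $M_\xi$, using clause (2) of the definition of a good commuting system at successor stages. However, the inductive invariant you carry is too weak to get through the successor step, and the ``crux'' paragraph contains two claims that do not follow from the hypotheses. First, you assert that $U_\xi\cap\mathrm{ran}(k_\xi(f))$ is an element of $M_\xi$; this would require $U_\xi$ to be weakly amenable to $M_\xi$, which is not part of the definition of a good commuting system (the $U_\xi$ are merely generated by $\kappa_\xi$ via $j_{\xi\xi+1}$). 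Second, and more seriously, the identity $k_\xi(S)=\{\eta\mid k_\xi(f)(\eta)\in U_\xi\cap\mathrm{ran}(k_\xi(f))\}$ amounts to the assertion $k_\xi\bigl(u^{(\xi)}\cap\mathrm{ran}(f)\bigr)=U_\xi\cap\mathrm{ran}(k_\xi(f))$, and your pointwise invariant ($A\in u^{(\xi)}$ iff $k_\xi(A)\in U_\xi$) does not even yield the inclusion $k_\xi\bigl(u^{(\xi)}\cap\mathrm{ran}(f)\bigr)\subseteq U_\xi$ that you need in order to invoke clause (2). The set $w=u^{(\xi)}\cap\mathrm{ran}(f)$ can have size $\pi_{0\xi}(\kappa_0)$ in $P_\xi$ while $k_\xi$ is not surjective, so $k_\xi(w)$ need not equal the pointwise image $k_\xi''w$; the pointwise invariant controls only the latter and says nothing about the remaining elements of $k_\xi(w)$. (By contrast, your uses of the pointwise invariant for well-definedness and \L o\'s are fine, since there only single sets such as $\{\eta\mid f(\eta)=g(\eta)\}$ are involved.)

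The repair is precisely the paper's formulation of the invariant: carry along the set-wise statement $k_\xi(u_\xi^{(i)})\subseteq U_\xi$, where $u_0^{(i)}=u_0\cap m_0^{(i)}$ and $u_\xi^{(i)}$ is its image under the iteration maps of $u_0$. This is where the hypothesis $U_0\cap m_0^{(i)},\,m_0^{(i)}\in m_0^{(i+1)}$ actually enters. Since $f$ lies in the image of some $m_0^{(i)}$, which is transitive, one gets $w\subseteq u_\xi^{(i)}$, hence $k_\xi(w)\subseteq k_\xi(u_\xi^{(i)})\subseteq U_\xi$, and clause (2) applies to $k_\xi(w)$, which is an element of $M_\xi$ for free, being the image of an element of $P_\xi$. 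The set-wise invariant itself propagates because $u_{\xi+1}^{(i)}$ is represented by the constant function with value $u_\xi^{(i)}$, so $k_{\xi+1}(u_{\xi+1}^{(i)})=j_{\xi\xi+1}(k_\xi(u_\xi^{(i)}))\subseteq U_{\xi+1}$, again by clause (2). With this strengthening of the invariant your argument goes through and coincides with the paper's proof.
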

\begin{proof}
Let $\{h_{\xi\gamma}:m_\xi\to m_\gamma:\xi<\gamma<\alpha\}$ be the
not necessarily well-founded directed system of embeddings obtained
by iterating $u_0$ and let $u_\xi$ be the $\xi^{\text{th}}$-iterate
of $u_0$. Define $u_0^{(i)}=u_0\cap m^{(i)}_0$ and
$u_\xi^{(i)}=h_{0\xi}(u^{(i)}_0)$. It is easy to see that
$h_{\beta\xi}(u^{(i)}_\beta)=u^{(i)}_\xi$ and
$u_\xi=\cup_{i\in\omega} u_\xi^{(i)}$. To show that each $m_\xi$ is
well-founded, we will argue that we can define elementary embeddings
$\pi_\xi:m_\xi\to M_\xi$. More specifically we will construct the
following commutative diagram:
\begin{diagram}
M_0  &\rTo^{j_{01}}& M_1&\rTo^{j_{12}}&M_2&\rTo^{j_{23}}  &\ldots&\rTo^{j_{\xi\xi+1}}&M_{\xi+1}&\rTo^{j_{\xi+1\xi+2}}&\ldots\\
\uTo_{\pi_0}& & \uTo_{\pi_1}& & \uTo_{\pi_2}& &\ldots& &\uTo_{\pi_{\xi+1}}& & &\\
m_0 & \rTo^{h_{01}} & m_1
&\rTo^{h_{12}}&m_2&\rTo{h_{23}}&\ldots&\rTo^{h_{\xi\xi+1}}&m_{\xi+1}&\rTo^{h_{\xi+1\xi+2}}&\ldots
\end{diagram}
where
\begin{itemize}
\item[(1)]
$\pi_{\xi+1}([f]_{u_\xi})=j_{\xi\xi+1}(\pi_\xi(f))(\kappa_\xi)$,
\item[(2)] if $\lambda$ is a limit ordinal and $t$ is a thread in the
direct limit $m_\lambda$ with domain $[\beta, \lambda)$, then
$\pi_\lambda(t)=j_{\beta\lambda}(\pi_\beta(t(\beta)))$,
\item[(3)] $\pi_\xi(u_\xi^{(i)})\subseteq U_\xi$ for all $i\in\omega$.
\end{itemize}
We will argue that the $\pi_\xi$ exist by induction on $\xi$. Let
$\pi_0$ be the identity map. Suppose inductively that $\pi_\xi$ has
the desired properties. Define $\pi_{\xi+1}$ as in (1) above. Since
$\pi_\xi(u_\xi^{(i)})\subseteq U_\xi$ by the inductive assumption,
it follows that $\pi_{\xi+1}$ is a well-defined elementary
embedding. The commutativity of the diagram is also clear. It
remains to verify that $\pi_{\xi+1}(u_{\xi+1}^{(i)})\subseteq
U_{\xi+1}$. Recall that
\begin{displaymath}
u_{\xi+1}^{(i)}=h_{\xi\xi+1}(u_{\xi}^{(i)})=[c_{u_\xi^{(i)}}]_{u_\xi}.
\end{displaymath}
Let $\pi_\xi(u_\xi^{(i)})=v$. Then by inductive assumption,
$v\subseteq U_\xi$. Thus,
\begin{displaymath}
\pi_{\xi+1}(u_{\xi+1}^{(i)})=j_{\xi\xi+1}(c_v)(\kappa_\xi)=c_{j_{\xi\xi+1}(v)}(\kappa_\xi)=j_{\xi\xi+1}(v).
\end{displaymath}
By hypothesis, $j_{\xi\xi+1}(v)\subseteq U_{\xi+1}$. This completes
the inductive step. The limit case also follows easily.
\end{proof}
Below, we give another useful example of a good commuting system of
elementary embeddings.
\begin{lemma}\label{le:silverult}
Suppose $L$ \emph{(}or $L_\rho$\emph{)} is the Skolem closure of a
collection of order indiscernibles $I$ and let $i_\xi$ be the
$\xi^{\text{th}}$ element of $I$. Suppose $\delta$ is below
o.t.\pl$I$\pr\ and $\lambda$ is an ordinal such that for all $\xi$
below o.t.\pl$I$\pr\ and all $\xi'<\lambda$, the sum $\xi+\xi'$ is
below o.t.\pl$I$\pr. Then the system of embeddings
$\{j_{\alpha\beta}\mid \alpha<\beta<\lambda\}$ defined by
$j_{\alpha\beta}(i_\xi)=i_\xi$ for $\xi<\delta+\alpha$ and otherwise
$j_{\alpha\beta}(i_\xi)=i_{\xi+\gamma}$ where $\alpha+\gamma=\beta$
 is a good commuting system of elementary embeddings.
\end{lemma}
Note that o.t.($I$) is allowed to be $Ord$. The proof is a
straightforward application of indiscernibility. Thus, if $\kappa$
is a Silver indiscernible, then there is a good commuting system of
elementary embeddings of length $Ord$ with the first embedding
having critical point $\kappa$.

Now we can generalize Lemma \ref{le:countheight} to the case of
$\alpha$-iterable cardinals.
\begin{lemma}\label{le:countheight2}
If $\kappa$ is an $\alpha$-iterable cardinal, then every
$A\subseteq\kappa$ is contained in a weak $\kappa$-model $M$ for
which there exists an $\alpha$-good $M$-ultrafilter $U$ on $\kappa$
satisfying conditions \emph{(1)-(4)} of Lemma
\emph{\ref{le:countheight}}.
\end{lemma}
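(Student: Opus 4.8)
The plan is to transcribe the construction of Lemma~\ref{le:countheight} and then invoke Lemma~\ref{le:iterable} to upgrade the resulting $1$-good ultrafilter to an $\alpha$-good one. Fix $A\subseteq\kappa$ and, using $\alpha$-iterability, choose a weak $\kappa$-model $M'$ containing $A$ together with an $\alpha$-good $M'$-ultrafilter $U'$ on $\kappa$ (which, being $\alpha$-good with $\alpha\geq 1$, is in particular weakly amenable). Iterating the ultrapower construction through all $\alpha$ stages produces a directed system $\{j_{\xi\gamma}:M_\xi\to M_\gamma\mid\xi<\gamma<\alpha\}$ which, as with any system arising from iterated ultrapowers, is a good commuting system of elementary embeddings of length $\alpha$ with $M_0=M'$, $U_0=U'$, and first critical point $\kappa$. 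As in Lemma~\ref{le:countheight}, I would arrange without loss of generality that $M'\models$ ``I am $\her{\kappa}$''; since the first ultrapower $j_{01}$ is $\kappa$-powerset preserving by Proposition~\ref{p:wa}, this yields $M_0=\her{\kappa}^{M_1}$.

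Working inside $M_1$, I would then build an increasing $\omega$-chain of transitive elementary submodels $m^{(0)}\prec m^{(1)}\prec\cdots$ of $\her{\kappa}^{M_1}=M_0$, each of size $\kappa$ and with $A\in m^{(0)}$, arranging at each step that $m^{(n)}\in m^{(n+1)}$ and $U_0\cap m^{(n)}\in m^{(n+1)}$. The latter is possible because each transitive $m^{(n)}$ of size $\kappa$ lies in $\her{\kappa}^{M_1}=M_0$, so weak amenability of $U_0$ places $U_0\cap m^{(n)}$ back into $M_0=\her{\kappa}^{M_1}$, where it is available to the next submodel. This is precisely the recursion of Lemma~\ref{le:countheight}. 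Setting $M=\cup_n m^{(n)}$ and $U=M\cap U_0=\cup_n(U_0\cap m^{(n)})$, conditions \emph{(1)--(4)} hold by construction: \emph{(1)} and \emph{(2)} are immediate, \emph{(3)} is the elementarity of the chain, and \emph{(4)} holds because each $m^{(n)}\prec\her{\kappa}^{M_1}$, which satisfies ``I am $\her{\kappa}$''.

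It remains to verify that $U$ is $\alpha$-good rather than merely $1$-good, and this is the one place the argument genuinely differs from Lemma~\ref{le:countheight}. The set $M=\cup_n m^{(n)}$ is a transitive elementary submodel of $M_0$ satisfying $m^{(n)}\prec m^{(n+1)}$ and $m^{(n)},\,U_0\cap m^{(n)}\in m^{(n+1)}$, so $M$ meets the hypotheses of Lemma~\ref{le:iterable} (taking $m_0=M$ and $m_0^{(i)}=m^{(i)}$) relative to the good commuting system $\{j_{\xi\gamma}\}$ of length $\alpha$ from the first step. Lemma~\ref{le:iterable} then gives that $U=M\cap U_0$ is an $\alpha$-good $M$-ultrafilter, as required.

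I expect the main obstacle to be the ``without loss of generality'' reduction to $M'\models$ ``I am $\her{\kappa}$''. For the single ultrapower of Lemma~\ref{le:countheight} it is routine, but now one must check that restricting $U'$ to $\her{\kappa}^{M'}$ leaves all $\alpha$ iteration stages well-founded. This should follow by showing inductively that the $\xi$-th iterate of the restricted ultrafilter is $\her{\kappa_\xi}^{M_\xi}$, which embeds into the well-founded $M_\xi$, with the direct-limit stages handled by the same substructure observation. Everything else is a transcription of Lemma~\ref{le:countheight} followed by a single application of Lemma~\ref{le:iterable}.
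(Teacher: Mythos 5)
Your proof is correct and takes essentially the same route as the paper, whose own proof consists of exactly the two steps you describe: run the construction of Lemma~\ref{le:countheight} to produce the chain $m^{(0)}\prec m^{(1)}\prec\cdots$ and then apply Lemma~\ref{le:iterable} to the good commuting system arising from the iteration to conclude that $U=M\cap U'$ is $\alpha$-good. The one subtlety you flag --- that the reduction to $M'\models$ ``I am $\her{\kappa}$'' must be checked to preserve $\alpha$-goodness, by identifying the $\xi$-th iterate of the restricted ultrafilter with $\her{\kappa_\xi}^{M_\xi}$ and embedding it into the well-founded $M_\xi$ --- is left implicit in the paper, and your sketch of how to handle it is the right one.
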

\begin{proof}
Start with any weak $\kappa$-model $M'$ and an $\alpha$-good
$M'$-ultrafilter $U'$ on $\kappa$. Use proof of Lemma
\ref{le:countheight} to find a transitive elementary submodel $M$ of
$M'$ satisfying the requirements and use Lemma \ref{le:iterable} to
argue that $U=M\cap U'$ is $\alpha$-good.
\end{proof}
We are now ready to show that if $0^{\#}$ exists, the Silver
indiscernibles are $\alpha$-iterable in $L$ for all
$\alpha<\omega_1^L$. It will follow using the same techniques that
for $\alpha<\omega_1^L$, the $\alpha$-iterable cardinals are
downward absolute to $L$.
\begin{theorem}\label{th:silveriterable}
If $0^{\#}$ exists, then the Silver
indiscernibles are $\alpha$-iterable in $L$ for all $\alpha<\omega_1^L$.
\end{theorem}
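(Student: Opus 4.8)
The plan is to generalize the tree-and-absoluteness argument of Theorem \ref{th:ind} from a single embedding to an entire good commuting system of length $\alpha$, drawing the external witness from Lemma \ref{le:silverult} and extracting the ultrafilter from a branch by Lemma \ref{le:iterable}. Fix a Silver indiscernible $\kappa$ and an arbitrary $A\subseteq\kappa$ in $L$. Since $0^\#$ exists, $L$ is the Skolem hull of the class $I$ of indiscernibles, so Lemma \ref{le:silverult} furnishes, in $V$, a good commuting system $\{J_{\xi\gamma}\mid\xi<\gamma<\alpha\}$ of elementary self-maps of $L$ (or of a suitable $L_\rho$) whose successive critical points are indiscernibles starting at $\kappa$. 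Because $\alpha<\omega_1^L$, we may fix \emph{in} $L$ a surjection $e\colon\omega\to\alpha$; this is the only place the hypothesis $\alpha<\omega_1^L$ is used, and it is exactly what allows an $\omega$-height tree to code a system indexed by $\alpha$.

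First I would define, in $L$, a tree $T$ whose nodes are finite approximations to such a system together with an $\omega$-decomposition of its base model. A node of length $n$ carries transitive models $L_{\eta_\xi}$ and elementary embeddings $h_{\xi\gamma}\colon L_{\eta_\xi}\to L_{\eta_\gamma}$ for the finitely many indices $\xi,\gamma$ among $e(0),\dots,e(n)$, subject to: each $h_{\xi\gamma}$ has the prescribed critical point; the embeddings commute; and, writing $W_\xi$ for the $L_{\eta_\xi}$-ultrafilter generated by the critical point of $h_{\xi\xi+1}$ as in Proposition \ref{prop:genult}, condition (2) in the definition of a good commuting system holds, i.e.\ $h_{\xi\gamma}(B)\subseteq W_\gamma$ whenever $B\in L_{\eta_\xi}$ and $B\subseteq W_\xi$. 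Along the base coordinate $\xi=0$ the node must in addition form an increasing elementary chain $L_{\eta_0^{(0)}}\prec\cdots\prec L_{\eta_0^{(n)}}$ with $A\in L_{\eta_0^{(0)}}\models{\rm ZFC}^-$ and $W_0\cap L_{\eta_0^{(i)}},\,L_{\eta_0^{(i)}}\in L_{\eta_0^{(i+1)}}$, which are precisely the hypotheses of Lemma \ref{le:iterable}. The ordering is end-extension, so that a branch yields, on taking unions coordinatewise, a genuine good commuting system $\{j_{\xi\gamma}\colon M_\xi\to M_\gamma\mid\xi<\gamma<\alpha\}$ of length $\alpha$ together with a decomposed base $M_0=\bigcup_i m_0^{(i)}$ containing $A$.

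Next I would check that $T$ is ill-founded. In $V$ one reads a branch off the external system of Lemma \ref{le:silverult}: using Lemma \ref{le:indheight} to secure heights of cofinality $\omega$ and the elementarity in $L$ of the relevant $L_\eta$, cut each $J_{\xi\gamma}$ down to an increasing $\omega$-chain of size-$\kappa$ restrictions, each of which lies in $L$ by Remark \ref{rem:model} (2), and arrange them along $e$. Since $T\in L$ and well-foundedness is absolute, $T$ is then ill-founded in $L$ as well. A branch of $T$ in $L$ supplies, entirely inside $L$, a good commuting system of length $\alpha$ and a base model $m_0\ni A$ meeting the hypotheses of Lemma \ref{le:iterable}; applying that lemma \emph{in} $L$ produces an $\alpha$-good $m_0$-ultrafilter $u_0=m_0\cap W_0$ on $\kappa$. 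As $A\subseteq\kappa$ was arbitrary, $\kappa$ is $\alpha$-iterable in $L$.

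The main obstacle is the bookkeeping in the definition of $T$: one must arrange the finite approximations so that a single $\omega$-branch simultaneously codes all $\alpha$ coordinates (via $e$) while guaranteeing that commutativity and the good-system coherence condition survive passage to the union, and so that the base coordinate meets the weakly amenable decomposition demanded by Lemma \ref{le:iterable}. The second delicate point is verifying that the external system of Lemma \ref{le:silverult} really restricts to a branch---that the chosen heights can be taken cofinal and mutually elementary along every coordinate at once---which is where Lemma \ref{le:indheight} and the indiscernibility underlying Lemma \ref{le:silverult} do the work.
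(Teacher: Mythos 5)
Your proposal follows the paper's proof essentially step for step: both build, in $L$, a tree of finite approximations to a good commuting system of length $\alpha$ (the paper's bookkeeping uses a bijection $\rho:[\alpha]^2\to\omega$ where you use a surjection $e:\omega\to\alpha$, an inessential difference), witness ill-foundedness in $V$ via the size-$\kappa$ restrictions of the Lemma \ref{le:silverult} system supplied by Remark \ref{rem:model} (2) and Lemma \ref{le:indheight}, and extract the $\alpha$-good ultrafilter from a branch by Lemma \ref{le:iterable}. The only detail worth adding is the paper's explicit bound on the heights of the models appearing in tree nodes (its condition $\gamma_\xi^{(j)}<\lambda_\alpha=\cup_{\xi<\alpha}\lambda_\xi$), which keeps $T$ a set so that the absoluteness of ill-foundedness applies.
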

\begin{proof}
Fix a Silver indiscernible $\kappa_0$ and let
$\lambda_0=(\kappa_0^+)^L$. Let $U_0$ be an $\omega_1$-good
$L_{\lambda_0}$-ultrafilter on $\kappa_0$ that exists by Lemma
\ref{le:iterable} combined with Lemma \ref{le:silverult}. Let
\begin{displaymath}
\{j_{\gamma\xi}:L_{\lambda_\gamma}\to
L_{\lambda_\xi}:\gamma<\xi<\alpha\}
\end{displaymath}
be the good commuting system of elementary embeddings obtained from
the first $\alpha$-steps of the iteration. Let $\kappa_\xi$ be the
critical point of $j_{\xi\xi+1}$ and let $U_\xi$ be the
$\xi^{\text{th}}$-iterate of $U_0$. As before, we find a cofinal
sequence $\la\lambda_0^{(i)}:i\in\omega\ra$ in $\lambda_0$ such that
$L_{\lambda_0^{(i)}}\prec L_{\lambda_0}$ and $U_0\cap
L_{\lambda_0^{(i)}}, L_{\lambda_0^{(i)}}\in L_{\lambda_0^{(i+1)}}$.
Let $U_0^{(i)}=U_0\cap L_{\lambda_0^{(i)}}$,
$L_{\lambda_\xi^{(i)}}=j_{0\xi}(L_{\lambda_0^{(i)}})$, and
$U_\xi^{(i)}=j_{0\xi}(U_0^{(i)})$. Finally, let
$j_{\gamma\xi}^{(i)}$ be the restriction of $j_{\gamma\xi}$ to
$L_{\lambda_\gamma^{(i)}}$. Observe that each $j_{\gamma\xi}^{(i)}$
is an element of $L$ by remark \ref{rem:model} (2). As before, we
will use these sequences to show that the tree we construct below is
ill-founded.

To show that $\kappa_0$ is $\alpha$-iterable in $L$, for every
$A\subseteq\kappa_0$ in $L$, we need to construct in $L$ a weak
$\kappa_0$-model $M$ containing $A$ and an $\alpha$-good
$M$-ultrafilter on $\kappa_0$. Fix $A\subseteq\kappa_0$ in $L$.
Also, fix in $L$, a bijection $\rho:[\alpha]^2\to\omega$.

Define in $L$, the tree $T$ of finite tuples of sequences $t=\la
s_0,\ldots,s_{n}\ra$ where each $s_i$ is a sequence of length $n$
consisting of approximations to the elementary embedding from stage
$\xi$ to stage $\beta$ where $\rho(\xi,\beta)=i$. That is
\begin{displaymath}
s_i=\la h_{\xi\beta}^{(0)}:L_{\gamma_{\xi}^{(0)}}\to
L_{\gamma_{\beta}^{(0)}},\ldots,h_{\xi\beta}^{(n)}:L_{\gamma_{\xi}^{(n)}}\to
L_{\gamma_{\beta}^{(n)}}\ra\end{displaymath}
 Note that if
$t$ is an $m$-tuple, then we require all sequences in the tuple to
have length $m$. We define $t\leq t'$ whenever the length of $t'$ is
greater than or equal to the length of $t$ and the $i^{\text{th}}$
coordinate of $t'$ extends the $i^{\text{th}}$ coordinate of $t$.
The sequences $s_i=\la h_{\xi\beta}^{(j)}:L_{\gamma_{\xi}^{(j)}}\to
L_{\gamma_{\xi}^{(j)}}\mid 0\leq j\leq n\ra$ are required to satisfy
the properties:
\begin{itemize}
\item[(1)]
$A\in L_{\gamma_0^{(0)}}\models\rm{ZFC}^-$,
\item[(2)]
$h_{\xi\beta}^{(j)}:L_{\gamma_\xi^{(j)}}\to L_{\gamma_\beta^{(j)}}$
are commuting elementary embeddings,
\item[(3)]
$\gamma_\xi^{(j)}<\lambda_\alpha=\cup_{\xi<\alpha}\lambda_\xi$.
\end{itemize}
Let $\kappa_\xi$ be the critical points of $h_{\xi\xi+1}^{(j)}$ and
let $W_\xi^{(j)}$ be the $L_{\gamma_{\xi}^{(j)}}$-ultrafilters
generated by $\kappa_\xi$ via $h_{\xi\xi+1}^{(j)}$, then:
\begin{itemize}
\item[(4)] for $j<k\leq n$,
$L_{\gamma_\xi^{(j)}},W_\xi^{(j)}\in L_{\gamma_\xi^{(k)}}$,
$L_{\gamma_\xi^{(j)}}\prec L_{\gamma_\xi^{(k)}}$ and
$h_{\xi\beta}^{(k)}$ extends $h_{\xi\beta}^{(j)}$,
\item[(5)]
$h_{\xi\beta}^{(j)}(W_\xi^{(j)})=W_\beta^{(k)}$.
\end{itemize}
As before, we argue using the iterated ultrapowers of $U_0$, that
$T$ is ill-founded in $V$ and hence in $L$. Unioning up the branch
in $L$, we obtain a good commuting system of elementary embeddings
of length $\alpha$ and therefore an $\alpha$-good ultrafilter for
the first model in the system by Lemma \ref{le:iterable}.
\end{proof}

\begin{theorem}
For $\alpha<\omega_1^L$, if $\kappa$ is $\alpha$-iterable, then
$\kappa$ is $\alpha$-iterable in $L$.
\end{theorem}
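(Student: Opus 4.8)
The plan is to follow the two-case structure of the proof of Theorem~\ref{th:1iterabledown}, replacing its single-embedding analysis with the good-commuting-system machinery used for Theorem~\ref{th:silveriterable}. If $0^{\#}$ exists, then every uncountable cardinal of $V$, in particular $\kappa$, is a Silver indiscernible, so the conclusion is immediate from Theorem~\ref{th:silveriterable}. I would therefore assume for the remainder that $0^{\#}$ does not exist.

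Working in $L$, I would fix $L_\xi$ of size $\kappa$, and use $\alpha$-iterability together with Lemma~\ref{le:countheight2} to choose a weak $\kappa$-model $M$ containing $L_\xi$ and $V_\kappa$ and an $\alpha$-good $M$-ultrafilter $U$ on $\kappa$ satisfying conditions (1)--(4) of Lemma~\ref{le:countheight}; in particular $M\models$ ``I am $\her{\kappa}$'', and by the decomposition $M=\cup_n M_n$ the height of $M$ has cofinality $\omega$ in $V$. Iterating $U$ produces a good commuting system $\{j_{\eta\zeta}:M_\eta\to M_\zeta\mid \eta<\zeta<\alpha\}$ of length $\alpha$ with $M_0=M$ and critical points $\kappa_\eta=j_{0\eta}(\kappa)$. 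Writing $L_{\gamma_\eta}=L^{M_\eta}$ by Remark~\ref{rem:model}(1), the crucial reduction is to restrict the whole system to these constructible models and to verify that $\{j_{\eta\zeta}\restriction L_{\gamma_\eta}:L_{\gamma_\eta}\to L_{\gamma_\zeta}\mid\eta<\zeta<\alpha\}$ is again a good commuting system of length $\alpha$ all of whose embeddings are $\kappa_\eta$-powerset preserving, so that it carries exactly the structure of the Silver-indiscernible system of Lemma~\ref{le:silverult}. Conditions (1) and (2) defining a good commuting system transfer directly from the $M_\eta$-system to its restriction, since passing to the constructible parts alters neither commutativity nor the requirement that $j_{\eta\zeta}$ carry every subset of $U_\eta$ lying in $M_\eta$ into $U_\zeta$.

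The $\kappa_\eta$-powerset preservation of the restricted embeddings is the heart of the argument, and I would establish it stagewise exactly as in the $\alpha=1$ case. By elementarity each $M_\eta\models$ ``I am $\her{\kappa_\eta}$'', and since $j_{\eta\eta+1}$ is the ultrapower by the weakly amenable iterate $U_\eta$ (Lemma~\ref{le:iterate}, Proposition~\ref{p:wa}), it is $\kappa_\eta$-powerset preserving and $M_\eta=\her{\kappa_\eta}^{M_{\eta+1}}$. Moreover $\kappa_\eta$ is weakly compact in $M_{\eta+1}$, hence in $V_{\kappa_{\eta+1}}^{M_{\eta+1}}\models\mathrm{ZFC}$, and as this set model correctly sees that $0^{\#}$ does not exist it computes $(\kappa_\eta^{+})^{L}=\kappa_\eta^{+}$. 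Consequently $(\kappa_\eta^{+})^{L_{\gamma_{\eta+1}}}$ equals the height $\gamma_\eta$ of $M_\eta$, whence $\Power(\kappa_\eta)^{L_{\gamma_\eta}}=\Power(\kappa_\eta)^{L_{\gamma_{\eta+1}}}$ and the restricted successor embedding is $\kappa_\eta$-powerset preserving; powerset preservation at limit stages then follows by taking direct limits.

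With the restricted constructible system in hand, I would run the tree construction of Theorem~\ref{th:silveriterable} verbatim inside $L$: fix $A\subseteq\kappa$ in $L$ and a bijection $\rho:[\alpha]^2\to\omega$, and build the tree $T$ of finite tuples of finite approximations to a length-$\alpha$ good commuting system of constructible embeddings satisfying conditions (1)--(5) there. The restricted embeddings, cut down along the $\omega$-cofinal chain of elementary submodels of $L_{\gamma_0}$ arising from the decomposition $M=\cup_n M_n$, furnish a branch through $T$ in $V$; hence $T$ is ill-founded in $V$ and, by absoluteness of well-foundedness, ill-founded in $L$. Unioning a branch in $L$ yields a good commuting system of elementary embeddings of length $\alpha$ on constructible models, and Lemma~\ref{le:iterable} then produces the required $\alpha$-good ultrafilter over a weak $\kappa$-model containing $A$, witnessing that $\kappa$ is $\alpha$-iterable in $L$. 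I expect the main obstacle to be the stagewise computation $(\kappa_\eta^{+})^{L_{\gamma_{\eta+1}}}=\gamma_\eta$: one must check that the appeal to ``$0^{\#}$ does not exist'' is uniform along the full length-$\alpha$ iteration and that the matching of constructible heights with the iterates survives passage through the limit stages.
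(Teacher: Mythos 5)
Your proposal is correct and takes essentially the same route as the paper: the paper's own proof of this theorem is just the one-line instruction to combine the proof of Theorem~\ref{th:1iterabledown} with Lemma~\ref{le:iterable} (together with the tree argument of Theorem~\ref{th:silveriterable}), which is precisely what you carry out in detail. The stagewise powerset-preservation computation and the uniformity of the appeal to ``$0^{\#}$ does not exist'' (which holds by upward absoluteness of $0^{\#}$ from transitive models) that you flag as the main obstacles are indeed the only delicate points, and your treatment of them is sound.
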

\begin{proof}
Use the proof of Theorem \ref{th:1iterabledown} together with Lemma
\ref{le:iterable}.
\end{proof}
\section{The hierarchy of $\alpha$-iterable
cardinals}\label{sec:hierarchy} In this section, we show using the
techniques developed in the previous section, that for
$\alpha\leq\omega_1$, the $\alpha$-iterable cardinals form a
hierarchy of strength. We make some observations about the
relationship between $\alpha$-iterable cardinals and
$\alpha$-Erd\H{o}s cardinals. We show that a 2-iterable cardinal is
a limit of Schindler's \emph{remarkable} cardinals, improving the
upper bound on their consistency strength, and that a remarkable
cardinal implies the existence of a countable transitive model of
ZFC with a proper class of 1-iterable cardinals. Finally, we answer
a question from \cite{gitman:ramsey} about whether 1-iterable
cardinals imply the existence of $\kappa$-powerset preserving
embeddings on weak $\kappa$-models satisfying full ZFC.
\begin{theorem}
If $\kappa$ is an $\alpha$-iterable cardinal, then for $\xi<\alpha$,
the cardinal $\kappa$ is a limit of $\xi$-iterable cardinals.
\end{theorem}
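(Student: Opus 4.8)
The plan is to exploit an ultrapower embedding witnessing the $\alpha$-iterability of $\kappa$ and to show that the target model already believes $\kappa$ is $\xi$-iterable; reflecting this statement back through the embedding then produces $\xi$-iterable cardinals cofinally below $\kappa$, and a final absoluteness step pushes these down to $V$. Fix $\xi<\alpha$ (we may assume $\alpha\geq 1$, so that by Remark \ref{rem:iterablecards} (2) $\kappa$ is weakly compact and in particular inaccessible). Given $\beta<\kappa$, I want a $\xi$-iterable cardinal in the interval $(\beta,\kappa)$. First I would fix a weak $\kappa$-model $M$ with $V_\kappa\subseteq M$ carrying an $\alpha$-good $M$-ultrafilter $U$, and then apply Lemma \ref{le:countheight2} to normalize $M$ into an $\omega$-filtered model satisfying ``I am $\her{\kappa}$''. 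Letting $j\colon M\to N$ be the ultrapower by $U$, weak amenability and the fact that $M\models$``I am $\her{\kappa}$'' give $M=\her{\kappa}^N$, so $N$ has direct access to $M$ and to all its size-$\kappa$ elementary submodels.

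The heart of the argument is to show $N\models$``$\kappa$ is $\xi$-iterable''. Working inside $N$, for each $A\subseteq\kappa$ (equivalently, each $A\in M$) I would build an $\omega$-filtered transitive $\bar M=\bigcup_i\bar M^{(i)}\prec \her{\kappa}^N=M$ of size $\kappa$ with $A\in\bar M$ and with $U\cap\bar M^{(i)},\bar M^{(i)}\in\bar M^{(i+1)}$. Since $\bar M$ has transitive closure of size $\kappa$ it lies in $\her{\kappa}^N=M$, so weak amenability of $U$ yields $\bar U=\bar M\cap U\in M\subseteq N$. Now I would invoke Lemma \ref{le:iterable} in $V$, using the genuine length-$\alpha$ iteration of $U$ as the required good commuting system with base model $M_0=M$; this makes $\bar U$ an $\alpha$-good, hence $\xi$-good, $\bar M$-ultrafilter \emph{in $V$}. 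Because $N$ is transitive and the iterates of $\bar U$ are computed identically in $N$ and $V$ (they use only functions in $\bar M\in N$), well-foundedness of these iterates in $V$ transfers down to $N$, so $N\models$``$\bar U$ is $\xi$-good''. As $A$ was arbitrary, $N\models$``$\kappa$ is $\xi$-iterable''. It is worth noting that the argument does not push through at $\xi=\alpha$, which is consistent with the hierarchy being strict.

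With $N\models$``$\kappa$ is $\xi$-iterable'' and $\beta<\kappa<j(\kappa)$, the ordinal $\kappa$ witnesses in $N$ that there is a $\xi$-iterable cardinal strictly between $\beta$ and $j(\kappa)$. Since $\xi,\beta<\kappa$ are fixed by $j$, elementarity of $j\colon M\to N$ pulls this back to $M\models$``there is a $\xi$-iterable cardinal in $(\beta,\kappa)$'', and letting $\beta$ range over $\kappa$ gives that $M\models$``the $\xi$-iterable cardinals are unbounded in $\kappa$''.

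The main obstacle is the final transfer from $M$ to $V$: I must argue that any $\delta<\kappa$ which $M$ believes to be $\xi$-iterable really is $\xi$-iterable in $V$. The witnessing weak $\delta$-models and their $\xi$-good ultrafilters have size $\delta<\kappa$, so they and the entire length-$\xi$ iterations (with $\xi<\omega_1<\kappa$) lie in $V_\kappa\subseteq M$. The only non-trivial content of $\xi$-goodness is the well-foundedness of these iterates and of the direct limits at limit stages below $\xi$, and since $\kappa$ is inaccessible, $V_\kappa$ is closed under $\omega$-sequences; hence any potential descending $\omega$-chain through such an iterate already belongs to $V_\kappa\subseteq M$. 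Thus $M$ and $V$ agree on the well-foundedness of every relevant relation, so $\xi$-iterability of $\delta<\kappa$ is absolute between $M$ and $V$. This upgrades the $\xi$-iterable cardinals that $M$ sees below $\kappa$ to genuine $\xi$-iterable cardinals of $V$, unbounded in $\kappa$. As $\xi<\alpha$ was arbitrary, $\kappa$ is a limit of $\xi$-iterable cardinals for every $\xi<\alpha$, completing the proof.
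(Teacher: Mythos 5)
Your overall reflection scheme is the right shape, but there is a genuine gap at the heart of the argument: the claim that $\bar{M}$, and hence $\bar{U}=\bar{M}\cap U$, is an element of $N$. You build $\bar{M}=\bigcup_i\bar{M}^{(i)}$ by an $\omega$-recursion in which each step must locate $U\cap\bar{M}^{(i)}$, and $U$ is external to $N$. Each individual $\bar{M}^{(i)}$ and each $U\cap\bar{M}^{(i)}$ lies in $M=\her{\kappa}^N$, but the $\omega$-sequence itself is constructed in $V$ using $U$, so its union need not be a set of $N$; ``$\bar{M}$ has transitive closure of size $\kappa$'' only places $\bar{M}$ in $\her{\kappa}^N$ \emph{if} $\bar{M}\in N$ in the first place, which is exactly what is unproved. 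This is precisely the obstruction that the paper's tree-of-finite-approximations arguments (as in Theorems \ref{th:ind}, \ref{th:silveriterable}, and \ref{th:zfc}) are designed to overcome: one defines inside the target model a tree of finite approximations to the desired filtered system, uses the externally built sequence only to witness ill-foundedness of the tree in $V$, and then invokes absoluteness of ill-foundedness to extract a branch \emph{inside} the target model. Your proposal omits this step entirely, and without it the witness for $\xi$-iterability is never shown to live in $N$.

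A second, related problem is your choice of target model. The paper does not reflect through the single ultrapower $j\colon M\to N=M_1$; for $\alpha=\beta+1$ it runs the tree argument inside the \emph{final} iterate $M_\alpha$, precisely because $M_\beta$ is a set in $M_\alpha$, so the ordinals of $M_\beta$ are bounded there and the tree of approximations to a length-$\beta$ commuting system (whose nodes are the restricted embeddings $j_{\xi\gamma}^{(i)}$ for all $\xi<\gamma\leq\beta$, not just $\gamma\leq 1$) is a set of $M_\alpha$. In $N=M_1$ neither the later restricted embeddings nor the iterates of $\bar{U}$ (whose transitive collapses may be too tall for $M_1$) are available, so the transfer of ``$\bar{U}$ is $\xi$-good'' from $V$ down to $N$ is not the routine well-foundedness absoluteness you describe. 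The tell-tale symptom is your own remark that the argument ``does not push through at $\xi=\alpha$'': as written, no step of your proof actually uses $\xi<\alpha$, yet the conclusion for $\xi=\alpha$ is false (the least $\alpha$-iterable cardinal cannot be a limit of $\alpha$-iterable cardinals), so some step must be invalid, and it is the two identified above. Your final upward transfer from $M_0$ to $V$ via $V_\kappa\in M_0$ matches the paper and is fine; the limit-$\alpha$ case by induction is also fine.
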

\begin{proof}
Suppose $\kappa$ is an $\alpha$-iterable cardinal. Choose a weak
$\kappa$-model $M_0$ containing $V_\kappa$ as an element for which
there exists an $\alpha$-good $M_0$-ultrafilter, satisfying the
conclusions of Lemma \ref{le:countheight2}. Let $j_\xi:M_\xi\to
M_{\xi+1}$ be the $\xi^{\text{th}}$ step of the iteration by $U_0$.
Suppose, first, that $\alpha=\beta+1$ is a successor ordinal. In
this case, the iteration will have a final model namely $M_\alpha$.
It suffices to argue that $\kappa$ is $\beta$-iterable in
$M_\alpha$. To see this, suppose that $\kappa$ is $\beta$-iterable
in $M_\alpha$, then $\kappa$ is $\beta$-iterable in $M_1$ as well.
But then $M_1$ satisfies that there is a $\beta$-iterable cardinal
below $j_0(\kappa)$ and hence, by elementarity, $M_0$ satisfies that
$\kappa$ is a limit of $\beta$-iterable cardinals. But since
$V_\kappa\in M_0$, the model must be correct about this assertion.
Now we exactly follow the argument that Silver indiscernibles are
$\beta$-iterable in $L$, with $M_\alpha$ in the place of $L$. Let
$M_0=\cup_{i\in\omega} M_0^{(i)}$ where the $M_0^{(i)}$ satisfy the
conclusions of Lemma \ref{le:countheight2} and let
$M_\xi^{(i)}=j_{0\xi}(M_0^{(i)})$ for $\xi\leq\alpha$. Observe that
for $\xi<\gamma\leq\beta$, the restrictions
$j_{\xi\gamma}^{(i)}:M_\xi^{(i)}\to M_\gamma^{(i)}$ are all elements
of $M_\beta$ by remark \ref{rem:model} (2). Moreover, $M_\beta$ is a
set in $M_\alpha$ and hence the ordinals of $M_\beta$ are bounded in
$M_\alpha$ by some ordinal $\delta$. This suffices to run the same
tree building argument. The bound $\delta$ is needed to insure that
the tree is a set. Next, suppose, that $\alpha$ is a limit ordinal.
Fix $\xi<\alpha$, then $\kappa$ is $\xi+1$-iterable. So by the
inductive assumption, $\kappa$ is a limit of $\xi$-iterable
cardinals.
\end{proof}

Next, we give some results on the relationship between
$\alpha$-iterable cardinals and $\alpha$-Erd\H{o}s cardinals for
$\alpha\leq\omega_1$.
\begin{definition}
Suppose $\kappa$ is a regular cardinal and $\alpha$ is a limit
ordinal. Then $\kappa$ is $\alpha$-\emph{Erd\H{o}s} if every
structure of the form $\la L_\kappa[A],A\ra$ where
$A\subseteq\kappa$ has a good set of indiscernibles of order type
$\alpha$.\footnote{See Section \ref{sec:vram} for a discussion of
\emph{good} sets of indiscernibles.}
\end{definition}
Equivalently, $\kappa$ is $\alpha$-Erd\H{o}s if it is least such
that the partition relation \hbox{$\kappa\rightarrow
(\alpha)^{<\omega}$} holds.

In \cite{welch:ramsey}, Sharpe and Welch showed that:
\begin{theorem}\label{th:erdos}
An $\omega_1$-Erd\H{o}s is a limit of $\omega_1$-iterable cardinals.
\end{theorem}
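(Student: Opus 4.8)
The plan is to mirror the proof that Silver indiscernibles are $\alpha$-iterable in $L$ (Theorem \ref{th:silveriterable}), replacing $L$ by a suitable $L_\kappa[A]$ and the class of Silver indiscernibles by a good set of order-indiscernibles of order type $\omega_1$ supplied by the partition property $\kappa\rightarrow(\omega_1)^{<\omega}$. It suffices to show that for every $\beta<\kappa$ there is an $\omega_1$-iterable cardinal $\gamma\in(\beta,\kappa)$. Since an $\omega_1$-Erd\H{o}s cardinal is inaccessible, $|V_\kappa|=\kappa$, so I first fix $A\subseteq\kappa$ coding $V_\kappa$ in such a way that $\mathcal P(\delta)^{L_\kappa[A]}=\mathcal P(\delta)^V$ for every $\delta<\kappa$; this is the device that will let me quantify over \emph{all} subsets of the eventual witness cardinal. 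Applying the partition relation to $\langle L_\kappa[A],A\rangle$, after adjoining a constant for $\beta$, yields a good set $C=\{i_\xi\mid\xi<\omega_1\}$ of indiscernibles of order type $\omega_1$, all above $\beta$.

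Next I would feed $C$ into the machinery of Section \ref{sec:iterable}. The analogue of Lemma \ref{le:silverult} for structures $\langle L_\rho[A],A\rangle$, applied to the Skolem hull of $C$ inside $\langle L_\kappa[A],A\rangle$ with $\lambda=\omega_1$ (note that $\xi+\xi'<\omega_1$ for all countable $\xi,\xi'$, so the hypotheses hold), produces a good commuting system of elementary embeddings $\{j_{\xi\zeta}:M_\xi\to M_\zeta\mid\xi<\zeta<\omega_1\}$ of length $\omega_1$ whose first embedding has critical point $\gamma:=i_0$. Each $M_\xi$ is a genuinely well-founded transitive structure, being the transitive collapse of a hull inside the well-founded $L_\kappa[A]\subseteq V$; this is the point at which the present argument is \emph{simpler} than Theorem \ref{th:silveriterable}, since I want iterability in $V$ rather than in an inner model and therefore need no tree or $\Sigma^1_1$-absoluteness argument. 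Given $B\subseteq\gamma$ in the hull, taking an $\omega$-chain $m_0^{(i)}\prec M_0$ with $B\in m_0^{(0)}$ and $m_0^{(i)},\,U_0\cap m_0^{(i)}\in m_0^{(i+1)}$, exactly as in Lemmas \ref{le:countheight} and \ref{le:countheight2} and using that the first embedding is $\gamma$-powerset preserving by goodness of $C$, and setting $m_0=\cup_{i}m_0^{(i)}$, Lemma \ref{le:iterable} delivers an $\omega_1$-good $m_0$-ultrafilter. Thus $m_0$ is a weak $\gamma$-model containing $B$ with the required ultrafilter.

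The main obstacle is that a single good set of indiscernibles generates only a hull of size $\omega_1$, hence captures merely $\omega_1$-many subsets of $\gamma$, whereas $\omega_1$-iterability of $\gamma$ demands a witnessing model for \emph{every} $B\subseteq\gamma$. I would resolve this by arguing that $L_\kappa[A]\models$ ``$\gamma$ is $\omega_1$-iterable'' for unboundedly many $\gamma<\kappa$: indiscernibility reflects the partition and embedding properties of $\kappa$ downward, so inside $\langle L_\kappa[A],A\rangle$ one obtains, for each internal $B\subseteq\gamma$, the requisite good indiscernibles and hence, by carrying out the construction of the previous paragraph internally, the witnessing model and $\omega_1$-good ultrafilter. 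This is where the strictly weaker consistency strength of $\omega_1$-iterability relative to $\omega_1$-Erd\H{o}s is used: the Erd\H{o}s cardinal can ``see'' unboundedly many iterables below it. Finally I would transfer the conclusion to $V$: because $A$ was chosen so that $\mathcal P(\gamma)^{L_\kappa[A]}=\mathcal P(\gamma)^V$, the internal quantifier over subsets of $\gamma$ ranges over all $V$-subsets, and because well-foundedness of each of the first $\omega_1$ iterates is absolute between the transitive models $L_\kappa[A]$ and $V$ (all the models, all the relevant functions, and the ordinal $\omega_1$ lie in $L_\kappa[A]$), any ultrafilter that $L_\kappa[A]$ regards as $\omega_1$-good is genuinely $\omega_1$-good. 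Hence each such $\gamma$ is $\omega_1$-iterable in $V$, and since $\beta<\kappa$ was arbitrary, $\kappa$ is a limit of $\omega_1$-iterable cardinals.
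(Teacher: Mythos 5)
The paper does not actually prove this theorem\emdash it is quoted from Sharpe--Welch \cite{welch:ramsey}\emdash so there is no in-paper proof to compare against; I will assess your argument on its own terms. Your skeleton is right: good indiscernibles of order type $\omega_1$, the analogue of Lemma \ref{le:silverult} to produce a good commuting system of length $\omega_1$, Lemma \ref{le:iterable} to convert it into an $\omega_1$-good ultrafilter, and absoluteness of well-foundedness secured by choosing $A$ to code $V_\kappa$. But the step you yourself flag as ``the main obstacle'' is left genuinely unproved. Asserting that ``indiscernibility reflects the partition and embedding properties of $\kappa$ downward,'' so that for each internal $B\subseteq\gamma$ one obtains the requisite good indiscernibles, is a restatement of the conclusion, not an argument: the set $I$ you produced is indiscernible for $\langle L_\kappa[A],A\rangle$ with ordinal parameters below $i_0$ allowed (clause (2) of Definition \ref{def:goodind}), but it is \emph{not} indiscernible over an arbitrary $B\subseteq i_0$, and applying the partition relation afresh to a structure coding $B$ yields indiscernibles whose minimum is some other ordinal $\gamma_B$, so no single $\gamma$ gets all of its subsets witnessed this way. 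The standard repair is to argue by contradiction and code the counterexamples in \emph{before} invoking the partition property: if the $\omega_1$-iterable cardinals were bounded by $\beta<\kappa$, choose for each $\gamma\in(\beta,\kappa)$ a set $B_\gamma\subseteq\gamma$ witnessing the failure, fold the sequence $\langle B_\gamma\mid\gamma<\kappa\rangle$ into $A$ along with $V_\kappa$, and take good indiscernibles above $\beta$; then $B_{i_0}$ is definable in $\langle L_\kappa[A],A\rangle$ from the single parameter $i_0$, hence lies in the relevant Skolem hull, and your construction hands back a weak $i_0$-model containing $B_{i_0}$ with an $\omega_1$-good ultrafilter\emdash contradicting the choice of $B_{i_0}$.

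There is also a more technical slip earlier: you take the Skolem hull of $C$ alone and transitively collapse it. That hull has size $\omega_1$, so the collapse sends $i_0$ to an ordinal $\le\omega_1$; the resulting system has that small ordinal as its critical point, and a structure of size $\omega_1$ cannot be a weak $\gamma$-model for the inaccessible $\gamma=i_0$. (This is exactly why the paper's related result for $\gamma$-Erd\H{o}s cardinals only produces a \emph{countable} $L_\alpha[r]$ with iterable cardinals.) You must instead hull in all of $i_0$ together with the indiscernibles, which is precisely what clause (2) of goodness licenses, since $I$ remains indiscernible after adjoining constants for every $\xi<i_0$. With that change the collapse fixes $i_0$, the shifted-indiscernible embeddings have critical point $i_0$, and the models delivered by Lemma \ref{le:iterable} are genuine weak $i_0$-models.
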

Here we show that:
\begin{theorem}
If $\kappa$ is a $\gamma$-Erd\H{o}s cardinal for some
$\gamma<\omega_1$ and $\delta<\gamma$ is an ordinal such that for
all $\xi'<\gamma$ and $\xi<\delta$, the sum $\xi'+\xi<\gamma$, then
there is a countable ordinal $\alpha$ and a real $r$ such that
$L_\alpha[r]$ is a model of \emph{ZFC} having a proper class of
$\delta$-iterable cardinals.
\end{theorem}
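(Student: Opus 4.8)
The plan is to mimic the proof that the Silver indiscernibles are $\alpha$-iterable in $L$ (Theorem \ref{th:silveriterable}), replacing $L$ by the transitive collapse of a Skolem hull of Erd\H{o}s indiscernibles. Since $\gamma$-Erd\H{o}s cardinals are inaccessible, $\langle L_\kappa,\in\rangle\models$ ZFC, and by the definition of $\gamma$-Erd\H{o}s it carries a good set $I=\{i_\xi\mid\xi<\gamma\}$ of indiscernibles of order type $\gamma$. Let $H$ be the Skolem hull of $I$ in $\langle L_\kappa,\in\rangle$, let $\pi:H\to L_\eta$ be its transitive collapse, and set $\bar I=\pi[I]$, a good set of order indiscernibles of order type $\gamma$ whose Skolem closure is $L_\eta$. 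Here $\eta<\omega_1$, because $|I|=|\gamma|\leq\aleph_0$, and $L_\eta\models$ ZFC since $H\prec L_\kappa$. I would take $\alpha=\eta$ and let $r$ be a real coding $\langle L_\eta,\bar I\rangle$, so the model has the required form $L_\alpha[r]$; coding $\bar I$ makes the cofinality bookkeeping below transparent, although the internal statements we verify do not strictly require the predicate.

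Next I would apply Lemma \ref{le:silverult} to $L_\eta$, which is the Skolem closure of the order indiscernibles $\bar I$ of order type $\gamma$. The theorem's arithmetic hypothesis --- that $\xi'+\xi<\gamma$ whenever $\xi'<\gamma$ and $\xi<\delta$ --- is precisely the hypothesis of Lemma \ref{le:silverult} with its length parameter set equal to $\delta$. The decisive point is that this hypothesis holds for \emph{every} starting index $\sigma<\gamma$: hence for each such $\sigma$ the shift maps furnish a good commuting system of elementary embeddings $\{j^{\sigma}_{\beta\zeta}:L_\eta\to L_\eta\mid\beta<\zeta<\delta\}$ of length $\delta$ whose first embedding has critical point $i_\sigma$. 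Restricting these shift maps to the levels $L_\lambda$ with $\lambda=((i_\sigma)^+)^{L_\eta}$ and its images gives a good commuting system of weak $i_\sigma$-model embeddings of length $\delta$, living in $V$.

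To pass from these external systems to genuine $\delta$-iterability inside $L_\eta$, I would reproduce the tree argument of Theorem \ref{th:silveriterable} with $L_\eta$ in place of $L$. Fixing $A'\subseteq i_\sigma$ in $L_\eta$, build in $L_\eta$ the tree $T$ of finite approximations --- tuples of commuting embeddings of $L$-levels below $\eta$ satisfying the $W$-generation and elementarity conditions --- to the length-$\delta$ system. The restricted shift maps $j^{\sigma,(i)}_{\beta\zeta}$ are elements of $L_\eta$ by remark \ref{rem:model} (2), so they witness a branch through $T$ in $V$, whence $T$ is ill-founded in $V$. Since $L_\eta$ is a transitive, and so well-founded, model of ZFC and $T$ is a tree of finite sequences on a set, well-foundedness of $T$ is absolute (ranks of well-founded set trees are computed the same way in $V$ and in $L_\eta$); therefore $T$ is ill-founded already in $L_\eta$. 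Unioning a branch found in $L_\eta$ yields, inside $L_\eta$, a good commuting system of length $\delta$ with critical point $i_\sigma$, and Lemma \ref{le:iterable} then produces a $\delta$-good ultrafilter on a weak $i_\sigma$-model containing $A'$. As $A'$ was arbitrary, $L_\eta\models$ ``$i_\sigma$ is $\delta$-iterable.''

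Finally, this holds for every $\sigma<\gamma$, and for $\delta\geq 1$ a $\delta$-iterable cardinal is $1$-iterable and hence inaccessible (remark \ref{rem:iterablecards} (2)), so each $i_\sigma$ is an inaccessible cardinal of $L_\eta$; thus $L_\alpha[r]=L_\eta$ has cofinally many $\delta$-iterable cardinals \emph{provided} $\bar I$ is cofinal in $\mathrm{Ord}^{L_\eta}$, which gives the required proper class. I expect this last cofinality requirement to be the main obstacle: the Skolem hull of $I$ may a priori contain ordinals above $\sup I$, so $\bar I$ need not be cofinal in $\eta$, whereas ``a proper class of $\delta$-iterable cardinals'' demands that the indiscernibles be unbounded in $\eta$. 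I would resolve this by exploiting the goodness of $I$ together with the limit order type $\gamma$ --- either showing directly that no Skolem term on indiscernibles dominates all of $I$, so that $I$ is cofinal in $H$, or passing to the appropriate closure and coding the resulting structure into $r$ so that $\bar I$ collapses cofinally. The only secondary technical point, the $V$-to-$L_\eta$ absoluteness of ill-foundedness for a countable model, goes through because $L_\eta$ is well-founded and models enough of ZFC to compute ranks.
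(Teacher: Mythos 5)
Your overall strategy is the paper's: collapse the Skolem hull of a good set of Erd\H{o}s indiscernibles, invoke Lemma \ref{le:silverult} to get, for each indiscernible, a good commuting system of length $\delta$, and then transfer it into the collapsed model via the tree argument of Theorem \ref{th:silveriterable} together with Lemma \ref{le:iterable}. But there are two genuine gaps. The first concerns the real $r$. You take indiscernibles for $\langle L_\kappa,\in\rangle$ and only afterwards propose to let $r$ code $\langle L_\eta,\bar I\rangle$. That particular $r$ cannot work: $L_\eta[r]$ would contain a real coding a well-order of $\omega$ of type $\eta=\mathrm{Ord}^{L_\eta[r]}$, and a model of ZF must assign that well-order an ordinal of the model, which is impossible; so $L_\eta[r]$ would not satisfy ZFC. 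More importantly, you miss why a real is needed at all. The tree of Theorem \ref{th:silveriterable} is indexed by a bijection $\rho:[\delta]^2\to\omega$ \emph{inside} the model, so $\delta$ (hence $\gamma$) must be countable there; the paper therefore fixes $r$ \emph{first} as a real coding that $\gamma$ is countable, takes good indiscernibles for the structure $\langle L_\kappa[r],r\rangle$ (available since $\gamma$-Erd\H{o}sness quantifies over all predicates $A\subseteq\kappa$), and only then collapses. Adding a predicate after the fact, as you do, also forfeits indiscernibility for the expanded structure.

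The second gap is the one you flag yourself: the cofinality of the collapsed indiscernibles in $\eta$, which you leave as an unresolved obstacle with two candidate strategies. The paper settles it in one line using clause (1) of goodness: each $L_{i_\xi}[r]\prec L_\kappa[r]$, so $L_\beta[r]\prec L_\kappa[r]$ where $\beta=\sup I$, hence the Skolem hull of $I$ is contained in $L_\beta[r]$ and every hull element lies below some $i_\xi$; under the collapse the images $k_\xi$ are therefore unbounded in $\alpha$. This is essentially your first suggested strategy, but it needs to be carried out, since without it the conclusion ``proper class of $\delta$-iterable cardinals'' is exactly what fails. The remaining ingredients you cite --- the arithmetic hypothesis matching Lemma \ref{le:silverult} for every starting index, remark \ref{rem:model} (2) placing the restricted shift maps in the model, absoluteness of ill-foundedness for the countable tree, and the countable cofinality of $(k_\xi^+)^{L_\alpha[r]}$ needed for Lemma \ref{le:iterable} --- are all correct and match the paper.
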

\begin{proof}
Suppose $\kappa$ is a $\gamma$-Erd\H{o}s cardinal, then there is a
set $I=\{i_\xi\mid \xi\in\gamma\}$ of good indiscernibles for
$L_\kappa[r]$ where $r$ codes the fact that $\gamma$ is a countable
ordinal. Let $L_\alpha[r]$ be the collapse of the Skolem closure of
$I$ in $L_\kappa[r]$, then it is the Skolem closure of some
collection $K=\{k_\xi\mid \xi\in\gamma\}$ of indiscernibles. The
indiscernibles $k_\xi$ are unbounded in $\alpha$. Since each
$L_{i_\xi}[r]\prec L_\kappa[r]$, we have $L_\beta[r]\prec
L_\kappa[r]$ where $\beta$ is the sup of $I$. It follows that the
Skolem closure of $I$ is contained in $L_\beta[r]$ and hence the
$k_\xi$ are unbounded in $L_\alpha[r]$. By Lemma \ref{le:silverult},
for every indiscernible $k_\xi$, there is a good commuting system of
elementary embeddings of length $\delta$ on $L_\alpha[r]$ with the
first embedding having critical point $k_\xi$. Thus, by Lemma
\ref{le:iterable}, there is a $\delta$-good ultrafilter for every
$L_{(k_\xi^+)^{L_\alpha[r]}}$. Since $\alpha$ is countable, it is
clear that $(k_\xi^+)^{L_\alpha[r]}$ has countable cofinality. Now
we can use the argument in the proof of Theorem
\ref{th:silveriterable} to show that each $k_\xi$ is
$\delta$-iterable in $L_\alpha[r]$. Notice that we cannot use these
techniques to make the argument for $\delta=\gamma$ since the tree
of embedding approximations must be a set in $L_\alpha[r]$.
\end{proof}
In particular, note that an $\omega$-Erd\H{os} cardinal implies for
every $n\in\omega$, the consistency of the existence of a proper
class of $n$-iterable cardinals.
\begin{remark}
$\gamma$-Erd\H{o}s cardinals do not necessarily have any iterability
since the least such cardinal need not be weakly compact.
\end{remark}
In \cite{schindler:remarkable}, Schinder defined \emph{remarkable}
cardinals and showed that they are equiconsistent with the assumption
that $L(\mathbb R)$ cannot be modified by proper forcing. Schindler
showed that an $\omega$-Erd\H{o}s cardinal implies that there is a
countable model with a remarkable cardinal. We show that if $\kappa$
is 2-iterable, then $\kappa$ is a limit of remarkable cardinals. By
Theorem \ref{th:erdos}, this is an improved upper bound on the
consistency strength of these cardinals.
\begin{definition}\label{def:rem}
A cardinal $\kappa$ is \emph{remarkable} if for each regular
$\lambda>\kappa$, there exists a countable transitive $M$ and an
elementary embedding $e:M\to H_\lambda$ with $\kappa\in ran(e)$ and
also a countable transitive $N$ and an elementary embedding
$\theta:M\to N$ such that:
\begin{itemize}
\item [(1)] $cp(\theta)=e^{-1}(\kappa)$,
\item[(2)] $Ord^M$ is a regular cardinal in $N$,
\item[(3)] $M=H^N_{Ord^M}$,
\item[(4)] $\theta(e^{-1}(\kappa))>Ord^M$.
\end{itemize}
\end{definition}
We will need the following property of 2-iterable cardinals.
\begin{theorem}\label{th:diagram}
If $\kappa$ is a $2$-iterable cardinal, then every $A\subseteq\kappa$
is contained in a weak $\kappa$-model $M\models \rm{ZFC}$ for which there exists an embedding
$j:M\to N$ such that $M=V_{j(\kappa)}^N$ and $M\prec N$.
\end{theorem}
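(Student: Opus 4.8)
The plan is to iterate a well-chosen $2$-good ultrafilter twice and to extract $M$ and $N$ as rank-initial segments of the two iterates, defining the embedding $j$ by a double application of \L o\'s's theorem. First I would fix $A\subseteq\kappa$ and, using $2$-iterability together with Lemma \ref{le:countheight2}, obtain a weak $\kappa$-model $M_0\ni A$ carrying a $2$-good $M_0$-ultrafilter $U_0$ with $M_0\models$ ``I am $\her{\kappa}$''. Since $U_0$ gives a $\kappa$-powerset preserving embedding (Proposition \ref{p:wa}), $\kappa$ is inaccessible in $M_0$, so $V_\kappa^{M_0}$ is a set of $M_0$ modelling $\mathrm{ZFC}$. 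Let $M_0\xrightarrow{j_0}M_1\xrightarrow{j_1}M_2$ be the first two steps of the iteration of $U_0$ (well-founded by $2$-goodness), write $\kappa_1=j_0(\kappa)=\mathrm{crit}(j_1)$, $\kappa_2=j_1(\kappa_1)$, and let $U_1=j_0(U_0)$ be the weakly amenable $M_1$-ultrafilter on $\kappa_1$ of Lemma \ref{le:iterate}, so that $M_2=\mathrm{Ult}(M_1,U_1)$. Because $M_0\models$ ``I am $\her{\kappa}$'' and $j_0$ is $\kappa$-powerset preserving, $M_0=\her{\kappa}^{M_1}$, and applying $j_0$ gives $M_1=\her{\kappa_1}^{M_2}$; by elementarity $\kappa_1,\kappa_2$ are inaccessible in $M_1,M_2$. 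I then set $M:=V_{\kappa_1}^{M_1}$ and $N:=V_{\kappa_2}^{M_2}$. Both model $\mathrm{ZFC}$ (as $V_{\kappa_i}$ of an inaccessible), $M$ is a weak $\kappa$-model containing $A$, and since every set of rank $<\kappa_1$ in $M_2$ lies in $\her{\kappa_1}^{M_2}=M_1$ we get $M=V_{\kappa_1}^{M_2}=V_{\kappa_1}^{N}$.

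The relation $M\prec N$ comes essentially for free: the restriction $j_1\restrict V_{\kappa_1}^{M_1}$ is the identity (as $\mathrm{crit}(j_1)=\kappa_1$), and for each $a\in V_{\kappa_1}^{M_1}$ and formula $\varphi$ one has $V_{\kappa_1}^{M_1}\models\varphi(a)$ iff $M_1\models(V_{\kappa_1}\models\varphi(a))$ iff $M_2\models(V_{\kappa_2}\models\varphi(j_1(a)))$ iff $V_{\kappa_2}^{M_2}\models\varphi(a)$, so $M=V_{\kappa_1}^{M_1}\prec V_{\kappa_2}^{M_2}=N$. Once the embedding below is seen to satisfy $j(\kappa)=\kappa_1$, this also yields $M=V_{j(\kappa)}^{N}$.

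The heart of the argument is the construction of a \emph{nontrivial} $j:M\to N$ with critical point $\kappa$. Using normality of $U_0$, every element of $M=V_{\kappa_1}^{M_1}$ is of the form $[h]_{U_0}$ for some $h\colon\kappa\to V_\kappa^{M_0}$ in $M_0$ (so that $[h]_{U_0}\in j_0(V_\kappa^{M_0})=V_{\kappa_1}^{M_1}$), and I would define
\[
 j\bigl([h]_{U_0}\bigr)=[\,j_0(h)\,]_{U_1},
\]
noting that $j_0(h)\colon\kappa_1\to V_{\kappa_1}^{M_1}$, so $[\,j_0(h)\,]_{U_1}=j_1(j_0(h))(\kappa_1)\in V_{\kappa_2}^{M_2}=N$ is well formed. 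Taking $h=\mathrm{id}$ gives $j(\kappa)=\kappa_1$ and constant $h$ gives $j\restrict V_\kappa^{M_0}=\mathrm{id}$, whence $\mathrm{crit}(j)=\kappa$. Well-definedness and full elementarity reduce, via \L o\'s's theorem applied first in $M_1=\mathrm{Ult}(M_0,U_0)$ (seed $[\mathrm{id}]_{U_0}=\kappa$) and then in $M_2=\mathrm{Ult}(M_1,U_1)$ (seed $[\mathrm{id}]_{U_1}=\kappa_1$), to the single equivalence $B\in U_0\iff j_0(B)\in U_1$ for $B\in M_0$, which holds because $U_1=j_0(U_0)$ (Lemma \ref{le:iterate}). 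Concretely, for parameters $y_i=[h_i]_{U_0}$ both $M\models\varphi(\vec y)$ and $N\models\varphi(j\vec y)$ are shown equivalent to $\{\xi<\kappa\mid V_\kappa^{M_0}\models\varphi(\vec h(\xi))\}\in U_0$, using that $j_0$ carries this set to the corresponding set computed over $V_{\kappa_1}^{M_1}$.

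The main obstacle I anticipate is precisely this last verification: one must keep the two ultrapower representations straight (the outer seed $\kappa$ for $U_0$ against the seed $\kappa_1$ for $U_1$) and confirm that each $j_0(h)$ has domain $\kappa_1$ and range in $V_{\kappa_1}^{M_1}$, so that the second class $[\,j_0(h)\,]_{U_1}$ genuinely lands inside $N$ rather than merely in $M_2$. The matching of the two \L o\'s\ computations rests entirely on $U_1=j_0(U_0)$ transporting $U_0$-large sets to $U_1$-large sets; establishing the preliminary structural facts — inaccessibility of $\kappa$ in $M_0$ and the identifications $M_0=\her{\kappa}^{M_1}$, $M_1=\her{\kappa_1}^{M_2}$ (which guarantee that $M,N$ model full $\mathrm{ZFC}$ and that $V_{\kappa_1}^{M_1}=V_{\kappa_1}^{M_2}$) — is the necessary groundwork that makes the two steps align.
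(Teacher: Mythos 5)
Your argument is correct and is essentially the paper's own (sketched) proof: your map $j\bigl([h]_{U_0}\bigr)=[\,j_0(h)\,]_{U_1}$ is exactly the restriction to $V_{j_0(\kappa)}^{M_1}$ of the shift embedding $h_U$ in the paper's commutative diagram, with $2$-goodness used, as the paper notes, precisely to make the second iterate well-founded. The only cosmetic difference is that the paper simply arranges $V_\kappa\in M_0$ at the outset (so $V_\kappa^{M_0}=V_\kappa\models\mathrm{ZFC}$ is immediate from the inaccessibility of $\kappa$ in $V$), whereas you derive the inaccessibility of $\kappa$ inside $M_0$ from powerset preservation; either route works.
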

For proof, it suffices to observe that if $U$ is a 2-good ultrafilter
for a weak $\kappa$-model $M$, then we get the following commutative diagram:
\begin{diagram}
M & &\rTo^{j_U}& &N=M/U\\
 & \rdTo(4,4)^{j_{U^2}}& & &\\
\dTo^{j_U}& & & &\dTo_{h_U}\\
 & & & & \\
N=M/U & &\rTo_{j_{j_U(U)}}& & K=N/j_U(U)
\end{diagram}
where $j_U$ and $h_U$ are ultrapowers by $U$ and $j_{j_U(U)}$ is the
ultrapower by $j_U(U)$. If $V_\kappa\in M$, the restriction of $h_U$
to $V_{j(\kappa)}^N$ has all the required properties. See
\cite{gitman:ramsey} for details. The ultrafilter needs to be 2-good
to ensure that the bottom arrow embedding has a well-founded target.
\begin{theorem}
If $\kappa$ is $2$-iterable, then $\kappa$ is a limit of remarkable cardinals.
\end{theorem}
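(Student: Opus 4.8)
The plan is to feed the configuration produced by Theorem~\ref{th:diagram} into the definition of remarkable cardinal. So first fix $A\subseteq\kappa$ with $V_\kappa\in M$ and, by Theorem~\ref{th:diagram} (applying Lemma~\ref{le:countheight2} as needed), obtain a weak $\kappa$-model $M\models\rm{ZFC}$ with $V_\kappa\in M$ together with $j:M\to N$ such that $M=V_{j(\kappa)}^N$ and $M\prec N$, with critical point $\kappa$. Write $\Theta=j(\kappa)=\mathrm{Ord}^M$; since $M\models\rm{ZFC}$, $\Theta$ is inaccessible in $N$ and $M=H_\Theta^N$. The strategy has two halves: show $N\models$ ``$\kappa$ is remarkable'', and then reflect through $j$ to obtain unboundedly many remarkable cardinals below $\kappa$.

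The heart of the argument is the witness construction, which I would carry out inside $N$. Fix $\lambda$ regular in $N$ with $\kappa<\lambda<\Theta$. The uncollapsed map $j\restrict H_\lambda^M:H_\lambda^M\to H_{j(\lambda)}^N$ is elementary with critical point $\kappa$, sends $\kappa$ to $\Theta>\lambda$, and satisfies $H_\lambda^M=H_\lambda^{H_{j(\lambda)}^N}$. I would then collapse this to countable. Build in $\omega$ steps a countable $Y\prec H_{j(\lambda)}^N$ with $\kappa,\lambda,\Theta,H_\lambda^M\in Y$ and closed under $j\restrict H_\lambda^M$, using at each stage that the restriction of $j$ to a countable subset of $H_\lambda^M$ lies in $N$ by Remark~\ref{rem:model}~(2). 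Let $\sigma:Y\to \bar n$ be the transitive collapse, put $\bar m=\sigma(H_\lambda^M)=H_{\sigma(\lambda)}^{\bar n}$, let $e=\sigma^{-1}\restrict\bar m:\bar m\to H_\lambda^N$ (elementary into $H_\lambda^N$ by Tarski--Vaught, with $\kappa\in\mathrm{ran}(e)$), and set $\theta=\sigma\circ j\circ e:\bar m\to\bar n$. Then $\bar\kappa=\sigma(\kappa)=e^{-1}(\kappa)$, one checks $\mathrm{crit}(\theta)=\bar\kappa$, and $\theta(\bar\kappa)=\sigma(\Theta)$. The crucial point is property~(4): because $\Theta>\lambda$ and $\sigma$ is order-preserving, $\theta(\bar\kappa)=\sigma(\Theta)>\sigma(\lambda)=\mathrm{Ord}^{\bar m}$, strictly. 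This is exactly why one must collapse a proper initial segment $H_\lambda^M$ with $\lambda<\Theta$, rather than all of $M$, where $j(\kappa)=\mathrm{Ord}^M$ would force only equality. Properties~(2) and~(3) follow since $\bar m=H_{\sigma(\lambda)}^{\bar n}$ with $\sigma(\lambda)$ regular in $\bar n$. These witnesses are hereditarily countable in $N$, hence lie in $V_\Theta^N=M$, and the witnessing conditions are absolute between $M$ and $N$; so $M\models$ ``$\kappa$ is $\lambda$-remarkable''. Ranging over all regular $\lambda<\Theta$, which from $M$'s viewpoint is every regular cardinal above $\kappa$, gives $M\models$ ``$\kappa$ is remarkable'', and then $M\prec N$ yields $N\models$ ``$\kappa$ is remarkable'' for all regular $\lambda$.

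For the reflection, since $M\models$ ``$\kappa$ is remarkable'' and $\mathrm{crit}(j)=\kappa$, for each $\eta<\kappa$ we have $\eta=j(\eta)$, so $N\models$ ``there is a remarkable cardinal in $(j(\eta),j(\kappa))$'', witnessed by $\kappa$. By elementarity of $j$, $M\models$ ``there is a remarkable cardinal in $(\eta,\kappa)$''. As $\eta<\kappa$ was arbitrary, $M\models$ ``$\kappa$ is a limit of remarkable cardinals''.

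The main obstacle is transferring this last statement to $V$. For the analogous $\xi$-iterable theorem one invokes $V_\kappa\in M$ to conclude correctness, but remarkability of a $\delta<\kappa$ quantifies over \emph{all} regular $\lambda>\delta$ and so is not visibly decided by $V_\kappa$. I would handle this by first noting that, since $V_\kappa\in M$ and $\kappa$ is inaccessible (being $1$-iterable, Remark~\ref{rem:iterablecards}~(2)), $M$ computes $V_\kappa$ correctly, so each remarkable $\delta<\kappa$ found in $M$ is genuinely $\lambda$-remarkable for every regular $\lambda<\kappa$, i.e.\ remarkable in $V_\kappa$; and then appealing to the absoluteness of remarkable cardinals between $V_\kappa$ and $V$ for inaccessible $\kappa$ (Schindler). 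Verifying this absoluteness is the delicate step on which the argument turns, the strict inequality in property~(4) under collapse being the key device that makes the witness construction go through in the first place.
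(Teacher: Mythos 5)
Your overall strategy matches the paper's: reduce everything to showing $M\models$ ``$\kappa$ is remarkable'' using the configuration $j:M\to N$, $M=V_{j(\kappa)}^N$, $M\prec N$ from Theorem~\ref{th:diagram}, build witnesses by restricting $j$ to an $H_\lambda^M$ with $\lambda<j(\kappa)$ and collapsing (the strict inequality in clause~(4) coming from $j(\kappa)>\lambda$, exactly as in the paper), and then reflect below $\kappa$ via elementarity of $j$. Your closing discussion of transferring ``limit of remarkables'' from $M$ to $V$ is in fact more explicit than the paper's.

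However, there is a genuine gap at the central step: you build the countable hull $Y\prec H_{j(\lambda)}^N$ \emph{externally}, in $V$, and then assert that the collapsed witnesses ``are hereditarily countable in $N$, hence lie in $V_\Theta^N=M$.'' Being hereditarily countable in $V$ gives no membership in $N$ whatsoever; $N$ is a transitive set of size roughly $\kappa$ and certainly does not absorb arbitrary countable subsets or hulls constructed outside it. The appeal to Remark~\ref{rem:model}~(2) does not repair this: that remark yields $j\restrict X\in N$ only for $X$ an \emph{element of $M$} of size $\kappa$ in $M$, whereas (i) $j\restrict H_\lambda^M$ itself is not covered since $|H_\lambda|^M>\kappa$, (ii) the countable fragments of $H_\lambda^M$ arising at the stages of your external recursion need not be elements of $M$ at all (weak $\kappa$-models are not countably closed), and (iii) even if each stage's restriction were in $N$, the $\omega$-sequence of stages need not be. This internalization problem is precisely what the paper's proof is organized around: it builds a chain of size-$\kappa$ restricted embeddings $j_n:X_n\to Y_n$, each of which \emph{is} in $N$ by Remark~\ref{rem:model}~(2), observes that the sequence itself may fail to be in $N$, and then runs a tree-of-approximations argument, using absoluteness of ill-foundedness, to recover such a sequence inside $N$ before collapsing to countable structures inside $M$. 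Your proof needs an analogous absoluteness or tree argument (either at the size-$\kappa$ level as in the paper, or directly for the tree of finite approximations to the countable witnesses $(\bar m,\bar n,e,\theta)$, which is definable in $M$ from $H_\lambda^M$ and $\kappa$); without it, the witnesses you produce live in $V$ and there is no reason $M$ should see them.
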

\begin{proof}
Suppose $\kappa$ is 2-iterable, then there is $j:M\to N$ as in
Theorem \ref{th:diagram} with $V_\kappa\in M$. It will suffice to
argue that $\kappa$ is remarkable in $M$, since it will be
remarkable in $N$ by elementarity, and hence a limit of remarkable
cardinals. In $M$, fix a regular cardinal $\lambda>\kappa$.
Continuing to work in $M$, find $X_0\prec H_\lambda$ of size
$\kappa$ such that $V_\kappa\cup\{\kappa\}\subseteq X_0$. By remark
\ref{rem:model} (2), $j\restrict X_0:X_0\to j(X_0)$ is an element of
$N$. In $N$, find $Y_0\prec j(X_0)$ of size $\kappa$ such that
$X_0\cup j``X_0\cup \{\lambda\}\subseteq Y_0$. Let $j_0:X_0\to Y_0$
such that $j_0(x)=j(x)$ for all $x\in X_0$, then $j_0$ is clearly
elementary and an element of $N$. Let $Z_0=Y_0\cap H_\lambda$
(clearly $H_\lambda^M=H_\lambda^N$), then $Z_0\in H_\lambda$. Back
in $M$, find $X_1\prec H_\lambda$ such that $Z_0\subseteq X_1$ and
in $N$, find $Y_1\prec j(X_1)$ of size $\kappa$ and containing
$X_1\cup j``X_1$. Let $j_1:X_1\to Y_1$ such that $j_1(x)=j(x)$ for
all $x\in X_1$, then as before $j_1$ is elementary and an element of
$N$. Proceed inductively to define the the sequence $\la j_n:X_n\to
Y_n\mid n\in \omega\ra$. The elements of the sequence are all in
$N$, but the sequence itself need not be. As in the previous proofs,
we will use a tree argument to find a sequence with similar
properties in $N$ itself. The elements of the tree $T$ will be
sequences $\la h_0:P_0\to R_0,\ldots,h_n:P_n\to R_n\ra$ ordered by
extension and satisfying the properties:
\begin{itemize}
\item[(1)] $h_i:P_i\to R_i$ is an elementary
embedding with critical point $\kappa$,
\item[(2)] $V_\kappa\cup\{\kappa\}\subseteq P_0$, $P_i\in H_\lambda$,
$P_i$ has size $\kappa$ in $H_\lambda$,  $P_i\prec H_\lambda$, and
$P_i\subseteq R_i$
\item[(3)] $R_i\in H_{j(\lambda)}$ and $R_i$ has size $\kappa$,
\item[(4)] for $i<j\leq n$, $R_i\cap H_\lambda\subseteq P_j$.
\end{itemize}
The sequence of embeddings we constructed above is a branch through
$T$ and hence $T$ is ill-founded. Thus, $N$ has a branch of $T$. Let
$h:P\to R$ be the embedding obtained from unioning up the branch. By
our construction, $P\prec H_\lambda$, $P=H_\lambda^R$, and $P$ is an
element of $M$. Collapse $R$ and use the collapse to define an
elementary embedding of transitive structures
$\bar{h}:\bar{P}\to\bar{R}$ where $\bar{R}$ is the collapse of $R$
and $\bar{P}$ is the collapse of $P$. Since $\bar{h}$, $\bar{P}$,
and $\bar{R}$ all have transitive size $\kappa$, they are elements
of $M$. Observe that $Ord^{\bar{P}}=\gamma$ is a regular cardinal in
$\bar{R}$, the critical point of $\bar{h}$ is $\kappa$, and
$\bar{h}(\kappa)>\gamma$. Finally, in $M$,  take a countable
elementary substructure of $\la \bar{R},\bar{P},\bar{h}\ra$ and
collapse the structures to obtain an elementary embedding $i:m\to n$
of countable structures with critical point $\theta$. Let $e:m\to
H_\lambda$ be the composition of the inverses of the collapse maps.
The embeddings $i:m\to n$ and $e:m\to H_\lambda$ clearly satisfy
properties (1)-(4) in the definition of remarkable cardinals. This
completes the argument that $\kappa$ is remarkable in $M$.
\end{proof}
If $\kappa$ is at least 2-iterable, then by Theorem
\ref{th:diagram}, we can assume without loss of generality that we
have embeddings on weak $\kappa$-models satisfying full ZFC. Gitman
asked in \cite{gitman:ramsey} whether the same holds true for
1-iterable cardinals. We end this section, by answering the question
in the negative and using the same techniques to pin the consistency
strength of remarkable cardinals exactly between 1-iterable and
2-iterable cardinals.
\begin{theorem}\label{th:zfc}
If every $A\subseteq\kappa$ can be put into a weak $\kappa$-model
$M\models``\Power(\kappa)$ exists" for which there exists a $1$-good
$M$-ultrafilter on $\kappa$, then $\kappa$ is a limit of
$1$-iterable cardinals.
\end{theorem}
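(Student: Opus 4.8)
The plan is to reduce everything to showing that $\kappa$ is $1$-iterable in the target of a single $\kappa$-powerset preserving embedding, and then to reflect back. Since the hypothesis strengthens $1$-iterability, $\kappa$ is weakly compact and hence inaccessible by Remark \ref{rem:iterablecards} (2), so $V_\kappa$ has size $\kappa$ and can be coded by a subset of $\kappa$. Applying the hypothesis to such a code, I would fix a weak $\kappa$-model $M$ with $V_\kappa\in M$ and $M\models``\Power(\kappa)$ exists", together with a $1$-good $M$-ultrafilter $U$ on $\kappa$, and let $j:M\to N$ be the ultrapower. By Proposition \ref{p:wa} (1) the map $j$ is $\kappa$-powerset preserving, so $\Power(\kappa)^M=\Power(\kappa)^N$. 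The first thing to verify is that $N\models``\Power(\kappa)$ exists": elementarity gives $N\models``\Power(j(\kappa))$ exists", and then $\Power(\kappa)^N=\{x\in\Power(j(\kappa))^N\mid x\subseteq\kappa\}$ is a set of $N$ by Separation. Consequently $\her{\kappa}^N$ exists in $N$, and since $\her{\kappa}$ is determined by $\Power(\kappa)$ (each element being coded by a subset of $\kappa$ with absolute decoding), we get $\her{\kappa}^N=\her{\kappa}^M=:\mathcal H\in N$. This is precisely where the extra hypothesis $M\models``\Power(\kappa)$ exists" is used: it is what forces $\mathcal H$ into $N$ and makes the shared $\Power(\kappa)$ available.

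The heart of the argument is to show $N\models``\kappa$ is $1$-iterable", which I would do by running the tree construction of Theorem \ref{th:ind} inside $N$. Fix $A\subseteq\kappa$ in $N$; then $A\in\mathcal H$. Working in $N$, define the tree $T$ of finite sequences of transitive elementary embeddings $\langle h_0\colon P_0\to R_0,\dots,h_n\colon P_n\to R_n\rangle$ exactly as in Theorem \ref{th:ind}: $A\in P_0\models\rm{ZFC}^-$, each $h_i$ has critical point $\kappa$, the $R_i$ are bounded below $(j(\kappa)^+)^N$ so that $T$ is a set of $N$, and the coherence conditions $P_i,W_i\in P_{i+1}$, $P_i\prec P_{i+1}$, $R_i\prec R_{i+1}$, $h_{i+1}\supseteq h_i$ hold, where $W_i$ is the ultrafilter generated by $\kappa$ via $h_i$ as in Proposition \ref{prop:genult}. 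To witness that $T$ is ill-founded I would build \emph{in $M$} an increasing $\omega$-chain of transitive size-$\kappa$ elementary submodels $P_0\prec P_1\prec\cdots$ of $\mathcal H$ with $A\in P_0$ and $P_i,W_i\in P_{i+1}$, and set $h_i=j\restriction P_i$. Each $P_i\in M$ has $M$-size $\kappa$, so $h_i=j\restriction P_i\in N$ by Remark \ref{rem:model} (2), and the generated $W_i=U\cap\Power(\kappa)^{P_i}$ lies in $M$ by weak amenability and is definable from $h_i$ in $N$. The sequence $\langle h_i\rangle$ itself lies in $V$, and each initial segment belongs to $T$, so $T$ is ill-founded in $V$.

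Next I would transfer ill-foundedness into $N$. Because $N\models\rm{ZFC}^-$, a set tree is well-founded in $N$ if and only if it carries an ordinal rank function in $N$; such a rank function is absolute evidence of well-foundedness, so it would certify well-foundedness in $V$ as well. Since $T$ is ill-founded in $V$, it is therefore ill-founded in $N$, and $N$ (satisfying choice) produces an actual branch through $T$. Unioning this branch yields, inside $N$, a $\kappa$-powerset preserving embedding $h\colon P\to R$ on a weak $\kappa$-model $P$ containing $A$, whose associated ultrafilter $W=\bigcup_i W_i$ is weakly amenable with well-founded ultrapower (a factor of $h$), that is, a $1$-good $P$-ultrafilter. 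As $A$ was an arbitrary subset of $\kappa$ in $N$ and $N\models``\Power(\kappa)$ exists", this shows $N\models``\kappa$ is $1$-iterable".

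Finally I would reflect. Since $N\models``\kappa$ is $1$-iterable" and $\kappa<j(\kappa)$, for every $\delta<\kappa$ the model $N$ satisfies that there is a $1$-iterable cardinal in the interval $(\delta,j(\kappa))$, namely $\kappa$ itself; applying elementarity of $j$ (with $j(\delta)=\delta$) gives that $M$ satisfies there is a $1$-iterable cardinal in $(\delta,\kappa)$ for every $\delta<\kappa$, so $M\models``\kappa$ is a limit of $1$-iterable cardinals". Because $V_\kappa\in M$ and $1$-iterability of a cardinal below $\kappa$ is witnessed by objects of size $<\kappa$ lying in $V_\kappa$, the model $M$ computes this notion correctly below $\kappa$, and hence $\kappa$ is genuinely a limit of $1$-iterable cardinals in $V$. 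I expect the main obstacle to be the bookkeeping that keeps the approximations $j\restriction P_i$ inside $N$ while bounding their targets below $(j(\kappa)^+)^N$ so that $T$ is a set there; a secondary point requiring care is the absoluteness of ill-foundedness into the \emph{set} model $N$, handled above through rank functions rather than Shoenfield absoluteness.
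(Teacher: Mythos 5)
Your proposal is correct and follows essentially the same route as the paper's own (quite terse) proof: pass to the ultrapower $j:M\to N$, use $M\models``\Power(\kappa)$ exists'' to get $\her{\kappa}^M=\her{\kappa}^N$ and $\her{j(\kappa)}^N$ as a set bounding the tree, build the $\omega$-chain of transitive elementary submodels with the restrictions $j\restrict P_i\in N$, transfer ill-foundedness of the tree into $N$, and reflect. The details you supply (the rank-function absoluteness, the verification that $N$ computes $\Power(\kappa)$, and the final reflection via elementarity and $V_\kappa\in M$) are exactly the ones the paper leaves implicit.
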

\begin{proof}
Fix a weak $\kappa$-model $M\models``\Power(\kappa)$ exists"
containing $V_\kappa$ for which there is a $1$-good $M$-ultrafilter
$U$, and let $j:M\to N$ be the ultrapower embedding. Fix
$A\subseteq\kappa$ in $N$ and find in $N$, a transitive
$M_0\prec\her{\kappa}$ of size $\kappa$ and containing $A$. As
before, $j\restrict M_0:M_0\to j(M_0)$ is in $N$. Next, find a
transitive $M_1\prec \her{\kappa}$ of size $\kappa$ containing $M_0$
and $U\cap M_0$ and proceed inductively to define the sequence $\la
M_n\mid n\in\omega\ra$ in this manner. Again, we construct a tree to
obtain a sequence with similar properties in $N$ that will witness
1-iterability. That the tree can be defined in the first place is a
consequence of the fact that $\Power(j(\kappa))$ and hence
$\her{j(\kappa)}$ exists in $N$ by elementarity.
\end{proof}
\begin{corollary}
If every $A\subseteq\kappa$ can be put into a weak $\kappa$-model
$M\models\rm{ZFC}$ for which there exists a $1$-good $M$-ultrafilter
on $\kappa$, then $\kappa$ is a limit of $1$-iterable cardinals.
\end{corollary}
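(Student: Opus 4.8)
The plan is to derive this directly from Theorem~\ref{th:zfc}, of which it is a straightforward specialization. The only difference between the two statements lies in the strength of the theory satisfied by the witnessing weak $\kappa$-models: Theorem~\ref{th:zfc} requires each such $M$ to satisfy merely $``\Power(\kappa)$ exists'' (on top of the $\rm{ZFC}^-$ that every weak $\kappa$-model satisfies by definition), whereas the Corollary assumes the stronger hypothesis that each $M$ satisfies full $\rm{ZFC}$.

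Since $\rm{ZFC}$ includes the Powerset Axiom, any weak $\kappa$-model $M\models\rm{ZFC}$ in particular satisfies $``\Power(\kappa)$ exists''. Hence the hypothesis of the Corollary entails the hypothesis of Theorem~\ref{th:zfc}: given the assumption that every $A\subseteq\kappa$ lies in a weak $\kappa$-model $M\models\rm{ZFC}$ carrying a $1$-good $M$-ultrafilter, the same pair $A$, $M$ witnesses the weaker model requirement of Theorem~\ref{th:zfc}, with the very same $1$-good ultrafilter. Applying Theorem~\ref{th:zfc} then immediately yields that $\kappa$ is a limit of $1$-iterable cardinals. There is no real obstacle to overcome here---the entire content resides in Theorem~\ref{th:zfc}, whose tree-building argument already handles the essential case where $\Power(\kappa)$, and hence $\her{j(\kappa)}$, is available in the target model $N$---so the proof reduces to this single observation that the full Powerset Axiom is more than enough to guarantee the existence of $\Power(\kappa)$.
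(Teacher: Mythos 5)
Your proposal is correct and matches the paper's intent exactly: the corollary is stated without proof precisely because a weak $\kappa$-model of full $\rm{ZFC}$ in particular satisfies ``$\Power(\kappa)$ exists,'' so the hypothesis of the corollary is a special case of that of Theorem~\ref{th:zfc}. Nothing further is needed.
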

\begin{theorem}
If $\kappa$ is a remarkable cardinal, then there is a countable
transitive model of $\rm{ZFC}$ with a proper class of $1$-iterable
cardinals.
\end{theorem}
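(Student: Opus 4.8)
The plan is to extract from the remarkable embeddings a $\bar\kappa$-powerset preserving map that yields a $1$-good ultrafilter, to run the tree argument of Theorem \ref{th:ind} inside the target model so as to manufacture $1$-iterable cardinals cofinally below its critical point, and finally to reflect. Since remarkable cardinals are downward absolute to $L$ (Schindler), and since the object we want is a countable transitive set (an absolute notion), I would first pass to $L$ and work there. Fix a regular $\lambda>\kappa^+$ in $L$, so that $H_\lambda^L=L_\lambda$ and $L_\lambda\models$ ``$\kappa$ is inaccessible and $\Power(\kappa)$ exists''. Applying Definition \ref{def:rem} at $\lambda$ gives a countable $e:M\to L_\lambda$ with $\kappa\in\mathrm{ran}(e)$ — so $M=L_{\bar\mu}$ for $\bar\mu=\mathrm{Ord}^M$, and $\bar\kappa:=e^{-1}(\kappa)$ is inaccessible in $M$ with $\Power(\bar\kappa)^M\in M$ — together with $\theta:M\to N=L_\nu$ of critical point $\bar\kappa$ satisfying $M=H^N_{\bar\mu}$, $\bar\mu$ regular in $N$, and $\theta(\bar\kappa)>\bar\mu$.

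The first genuine step is to observe that $\theta$ is $\bar\kappa$-powerset preserving: since $M=H^N_{\bar\mu}$ with $\bar\mu$ regular in $N$, every subset of $\bar\kappa$ lying in $N$ has hereditary size below $\bar\mu$ and hence lies in $M$, while conversely $M\subseteq N$; thus $\Power(\bar\kappa)^M=\Power(\bar\kappa)^N$, and in particular $\bar\kappa^{+M}=\bar\kappa^{+N}<\bar\mu$. By Propositions \ref{prop:genult} and \ref{p:wa}, the derived ultrafilter $U=\{A\in\Power(\bar\kappa)^M\mid\bar\kappa\in\theta(A)\}$ is a weakly amenable $M$-ultrafilter with well-founded ultrapower, i.e.\ a $1$-good ultrafilter. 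For the eventual model of ZFC I would take $W=L_{\bar\kappa}$: the restriction $e\restrict L_{\bar\kappa}:L_{\bar\kappa}\to L_\kappa$ is elementary, and since $\kappa$ is inaccessible in $L$ we have $L_\kappa\models\mathrm{ZFC}$, whence $W$ is a countable transitive model of ZFC with $\mathrm{Ord}^W=\bar\kappa$.

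The heart of the argument is to show that $N\models$ ``$\bar\kappa$ is $1$-iterable''. Here I would reproduce the proof of Theorem \ref{th:ind} almost verbatim, with $N$ in the role of $L$, $\bar\kappa$ in the role of $\kappa$, the ordinal $\lambda^\ast:=\bar\kappa^{+N}=\bar\kappa^{+M}$ in the role of $(\kappa^+)^L$, and $\theta$ in the role of the indiscernible embedding. The two inputs of that proof have exact analogues: $\mathrm{cf}^V(\lambda^\ast)=\omega$ holds automatically because $\lambda^\ast<\bar\mu$ is a countable ordinal (this replaces the appeal to Lemma \ref{le:indheight}), and the restrictions $\theta\restrict L_\gamma$ for $L_\gamma\prec L_{\lambda^\ast}$ of size $\bar\kappa$ are elements of $N$ by Remark \ref{rem:model}(2). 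For a fixed $A\subseteq\bar\kappa$ in $N$ (equivalently in $M$, by powerset preservation) one builds in $N$ the tree $T$ of finite sequences of commuting embeddings of weak $\bar\kappa$-models with codomain heights bounded by $\theta(\lambda^\ast)<\nu$ — the bound making $T$ a set in $N$; the $\theta$-approximations along a cofinal $\omega$-sequence in $\lambda^\ast$ form a branch in $V$, so $T$ is ill-founded in $V$ and hence in $N$. Because $N\models$ ``$\lambda^\ast$ is regular and uncountable'', any branch that $N$ finds has domain heights bounded below $\lambda^\ast$, so it unions to a genuine weak $\bar\kappa$-model carrying a $1$-good ultrafilter and containing $A$. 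This witnesses $1$-iterability of $\bar\kappa$ inside $N$.

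Finally I reflect through $\theta$. As $\mathrm{cp}(\theta)=\bar\kappa$, we have $\theta(\bar\rho)=\bar\rho$ for every $\bar\rho<\bar\kappa$; since $N\models$ ``$\bar\kappa$ is $1$-iterable'' and $\bar\rho<\bar\kappa<\theta(\bar\kappa)$, the model $N$ satisfies ``there is a $1$-iterable cardinal strictly between $\bar\rho$ and $\theta(\bar\kappa)$'', so by elementarity of $\theta:M\to N$ the model $M$ satisfies ``there is a $1$-iterable cardinal strictly between $\bar\rho$ and $\bar\kappa$''. As $\bar\rho$ was arbitrary, the $1$-iterable cardinals are unbounded below $\bar\kappa$ in $M$; and because $1$-iterability of an ordinal $\delta<\bar\kappa$ is computed from witnesses of hereditary size below $\bar\kappa$, and $\Power(\delta)$ agrees among $M$, $W=L_{\bar\kappa}$, and $L$, it is absolute among these structures. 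Hence $W\models$ ``there is a proper class of $1$-iterable cardinals'', and $W$ is the desired countable transitive model of ZFC. I expect the main obstacle to be the bookkeeping in the third paragraph — guaranteeing that $T$ is a set in $N$ and that $N$'s branches deliver size-$\bar\kappa$ models — which rests entirely on the cofinality discrepancy between $\mathrm{cf}^V(\lambda^\ast)=\omega$ and $N$'s belief that $\lambda^\ast$ is regular, exactly as in Theorem \ref{th:ind}.
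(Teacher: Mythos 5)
Your argument is correct, and its skeleton is the paper's: extract a $\bar\kappa$-powerset preserving embedding from the remarkable data (using $M=H^N_{\mathrm{Ord}^M}$ with $\mathrm{Ord}^M$ regular in $N$), use a tree/absoluteness argument to see that the critical point is $1$-iterable inside the target model, reflect through $\theta$ to get $1$-iterable cardinals unboundedly below the critical point, and take the rank-initial segment as the desired countable transitive model of ZFC. The difference is in which earlier machinery you lean on. The paper merely notes that $\Power(e^{-1}(\kappa))$ exists in $M$ (because $\Power(\kappa)$ exists in $H_\lambda$) and then argues ``exactly as in the proof of Theorem~\ref{th:zfc}'', whose tree is built from transitive elementary submodels of $H_{\kappa^+}$ and therefore runs in $V$ with no constructibility hypotheses; you instead first pass to $L$ via Schindler's downward absoluteness of remarkable cardinals so that all models become $L$-levels and the tree of Theorem~\ref{th:ind} can be reused verbatim, with your observation that $\mathrm{cf}^V(\lambda^\ast)=\omega$ holds automatically (by countability of $M$) correctly replacing the indiscernibility input of Lemma~\ref{le:indheight}. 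The detour through $L$ is not needed and costs an appeal to an external theorem, but it is harmless: $L_{\bar\kappa}$ serves just as well as the paper's $V_{e^{-1}(\kappa)}^M$. One point worth retaining from your write-up, which the paper leaves implicit, is the explicit check that the branch found inside $N$ unions to a model of size $\bar\kappa$, via $N$'s belief that $\bar\kappa^{+N}$ is regular while its true cofinality is $\omega$.
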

\begin{proof}
Fix a regular $\lambda>\kappa^+$ and let $e:M\to H_\lambda$,
$\sigma:M\to N$ with critical point $e^{-1}(\kappa)=\delta$ be as in
definition \ref{def:rem}. Note that $\Power(\delta)$ exists in $M$
since $\Power(\kappa)$ exists in $H_\lambda$. Arguing exactly as in
the proof of Theorem \ref{th:zfc}, we see that
$\delta=e^{-1}(\kappa)$ is a limit of 1-iterable cardinals. Thus,
$V_\delta^M$ is a countable transitive model of ZFC with a class of
1-iterable cardinals.
\end{proof}

\section{Ramsey-like cardinals and downward absoluteness
to $K$}\label{sec:K} In this section, we show that the strongly
Ramsey and super Ramsey cardinals introduced in \cite{gitman:ramsey}
are downward absolute to the core model $K$.
\begin{definition}
A cardinal $\kappa$ is \emph{strongly Ramsey} if every
$A\subseteq\kappa$ is contained in a $\kappa$-model $M$ for which
there exists a $\kappa$-powerset preserving elementary embedding
$j:M\to N$.
\end{definition}
\begin{definition}
A cardinal $\kappa$ is \emph{super Ramsey} if every
$A\subseteq\kappa$ is contained in a $\kappa$-model
$M\prec\her{\kappa}$ for which there exists a $\kappa$-powerset
preserving elementary embedding $j:M\to N$.
\end{definition}
Strongly Ramsey cardinals are limits of \emph{completely Ramsey}
cardinals that top Feng's $\Pi_\alpha$-Ramsey hierarchy
\cite{feng:ramsey}. They are Ramsey, but not necessarily completely
Ramsey. They were introduced with the motivation of using them for
indestructibility arguments involving Ramsey cardinals. Such an
application is made in Section \ref{sec:vram}. Super Ramsey
cardinals are limits of strongly Ramsey cardinals and have the
advantage that the embedding is on a $\kappa$-model  that is
stationarily correct. Note that we can restate the definition of
strongly Ramsey and super Ramsey cardinals in terms of the existence
of weakly amenable $M$-ultrafilters. Since we require the embedding
to be on a $\kappa$-model, such an ultrafilter is automatically
countably complete and therefore has a well-founded ultrapower.

As a representative \emph{core model} $K$ here we take that
constructed using extender sequences which are
\emph{non-overlapping} (see \cite{zeman:inner}). \ In such a model a
strong cardinal may exist but not a sharp for such. The argument
does not depend on any particular fine structural considerations,
simply the definability of $K$ up to $\kappa^+$ in any
$H_{\kappa^+}$ with applications of the Weak Covering Lemma
(\tmtextit{cf. }{\hspace{0.25em}} {\cite{zeman:inner}}).

\begin{proposition}
  If $\kappa$ is strongly Ramsey, then $\kappa$ is strongly Ramsey in K.
\end{proposition}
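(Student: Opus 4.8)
The plan is to show that strong Ramseyness of $\kappa$ is witnessed inside $K$ by exploiting the fact that any witnessing embedding $j : M \to N$ on a $\kappa$-model is $\kappa$-powerset preserving, so that $M$ and $N$ agree with $K$ on their subsets of $\kappa$ up to $(\kappa^+)^K$. First I would fix an arbitrary $A \subseteq \kappa$ with $A \in K$; since $\kappa$ is strongly Ramsey in $V$, I choose a $\kappa$-model $M$ containing $A$ (and, for good measure, containing $V_\kappa$ and the relevant initial segment of $K$) together with a $\kappa$-powerset preserving embedding $j : M \to N$ with critical point $\kappa$. The goal is to manufacture, working inside $K$, a $\kappa$-model $M' \in K$ containing $A$ together with a weakly amenable, $0$-good $M'$-ultrafilter on $\kappa$; by Proposition~\ref{p:wa} and the remarks on strongly Ramsey cardinals, this is equivalent to producing a $\kappa$-powerset preserving embedding on a $\kappa$-model in $K$.

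The key structural input is that because $j$ is $\kappa$-powerset preserving, $M$ and $N$ compute $\Power(\kappa)$ identically, and by Weak Covering together with the $K$-definability of $K$ up to $\kappa^+$ inside $H_{\kappa^+}$, the derived $M$-ultrafilter $U = \{X \in \Power(\kappa)^M \mid \kappa \in j(X)\}$ can be recaptured as an object definable over $K$. Concretely, I would arrange that the $\kappa$-model $M$ is taken of the form $L_\eta[E]^K$-like structure (an initial segment of $K$ of height some $\eta$ with $(\kappa^+)^K \le \eta$), so that $M \in K$ automatically and $\Power(\kappa)^M = \Power(\kappa)^K \cap M$. The heart of the matter is then to show that the ultrafilter $U$, or at least a weakly amenable fragment of it sufficient to build a $0$-good ultrafilter on a smaller $\kappa$-model, is itself an element of $K$. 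Here I expect to run essentially the same tree/ill-foundedness argument used in Theorem~\ref{th:ind} and Theorem~\ref{th:1iterabledown}: I would define in $K$ a tree $T$ of finite approximations to a $\kappa$-powerset preserving embedding on initial segments of the chosen $\kappa$-model, observe that the genuine embedding $j$ (restricted to cofinally many pieces of size $\kappa$) furnishes a branch through $T$ in $V$, and then conclude by absoluteness of ill-foundedness that $K$ also has a branch, which unions up to the desired embedding in $K$.

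The main obstacle, and the step requiring the most care, is guaranteeing that the tree $T$ really lives in $K$ and that a branch through it in $V$ exists, i.e.\ that the approximating embeddings have size $\kappa$ and can be arranged to cohere. This is where Weak Covering is essential: I would use it to control $(\kappa^+)^K$ and to ensure that the pieces $M_n \cap U$ of the $M$-ultrafilter are captured by initial segments of $K$, exactly as weak amenability was used in Lemma~\ref{le:countheight} to split $M$ and $U$ into an increasing $\omega$-chain $M = \bigcup_n M_n$ with $M_n, M_n \cap U \in M_{n+1}$. A secondary subtlety is that $K$ need not be closed under $<\kappa$-sequences in the same way $V$ is, so producing a genuine $\kappa$-\emph{model} (rather than merely a weak $\kappa$-model) inside $K$ demands that the witnessing structure be an initial segment of $K$, which is $<\kappa$-closed by the acceptability and fine structure of $K$; verifying this closure is where fine-structural facts about $K$ from \cite{zeman:inner} enter. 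Once the branch is extracted in $K$ and unioned, the resulting embedding is $\kappa$-powerset preserving (its target agrees with $K$ on $\Power(\kappa)$), its domain is a $\kappa$-model in $K$ containing the prescribed $A$, and elementarity gives well-foundedness of the ultrapower, completing the verification that $\kappa$ is strongly Ramsey in $K$.
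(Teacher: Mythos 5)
There is a genuine gap: the tree/ill-foundedness strategy you import from Theorems \ref{th:ind} and \ref{th:1iterabledown} does not transfer from $L$ to $K$, and the paper does not use it here. The $L$-argument depends on two features special to $L$. First, the witnessing embedding there has the form $j:L_\lambda\to L_{j(\lambda)}$, so its restrictions to size-$\kappa$ pieces are, by Remark \ref{rem:model}\,(2), elements of the target $L_{j(\lambda)}$ and hence of $L$ itself; this is what makes the genuine embedding furnish a branch through a tree that is \emph{defined in} $L$ (the nodes of the branch must be elements of the tree, hence of $L$, even though the $\omega$-sequence enumerating them may live in $V$). For $K$ the ultrapower target $N$ of a $\kappa$-model $M$ by an external ultrafilter is in general not contained in $K$, so $j\restrict X\in N$ gives you no reason to think $j\restrict X\in K$, and your claimed branch need not pass through the tree. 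Second, the $L$-tree is definable as a set because condensation forces every candidate target to be an initial segment $L_{\delta_i}$ with $\delta_i$ below a fixed ordinal; $K$ has no such rigidity for externally generated ultrapowers, so you cannot constrain the targets $R_i$ of your approximations to initial segments of $K$ without losing the branch entirely. Relatedly, your claim that $U$ itself ``can be recaptured as an object definable over $K$'' is too strong and is not what needs to be (or can be) shown.

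The paper's actual argument is a comparison argument, not an absoluteness argument. One passes to $\bar K=K^M_{\bar\kappa}$ with $\bar\kappa=(\kappa^+)^{K^M}$, checks that $\bar K$ is an initial segment of $K$, forms the premouse $N=\la\bar K,\in,W\ra$ with $W=U\cap\bar K$ (this lives in $V$, not in $K$), verifies iterability (here Weak Covering enters, to see $\tmop{cf}(\bar\kappa)=\kappa$ via the linear iteration of $U$ through the ordinals), and coiterates $N$ with $K$. Universality of $K$ together with the non-overlapping hypothesis then yields that $K$ itself already contains a witness: either $\bar\kappa=(\kappa^+)^K$ and $\kappa$ is measurable in $K$, or $K$ truncates in the comparison to some $N'=\la\bar K,\in,F\ra\in K$ carrying a weakly amenable $\bar K$-ultrafilter $F$. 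That is, the ultrafilter in $K$ is produced by the comparison process, not by reflecting $U$ into $K$. Finally, the $<\kappa$-closure of $\bar K$ as computed in $K$ is verified by a concrete argument factoring any $f:\alpha\to\bar K$ in $K$ through a bijection in $\bar K$ and a permutation lying in $V_\kappa\cap K\cap M$; appealing generically to ``acceptability and fine structure'' does not address this, since the issue is closure of the particular level $\bar\kappa$ under sequences lying in $K$ but a priori not in $M$.
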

\begin{proof}
Let $\kappa$ be strongly Ramsey and fix $A\subseteq\kappa$ in $K$.
Choose a $\kappa$-model $M$ containing $A$ such that $M\models A\in
K$ for which there exists a weakly amenable $M$-ultrafilter $U$ on
$\kappa$. To see that we can choose such $M$, note that $A \in P =
\langle J_{\alpha}^{E^K}, \in, E^K \rangle_{}$ for some $\alpha <
\kappa^+$ where $E^K$ is the extender sequence from which $K$ is
constructed. We may assume that a code for $P$ is definable over $V$
as a subset of $\kappa$. Hence we may assume that the $M$ witnessing
strong Ramseyness has this code, and so $P$, as an element. Note
that with $P \in M$, $K_{\kappa}$ is an initial segment of $K^M$.
Finally observe that $V_\kappa\in M$ since $M$ is a $\kappa$-model.
Let $\bar{\kappa} = \text{($\kappa^+)^{K_M}$}$ and set $\bar{K}
=K_{\bar{\kappa}}^M$. The possibility that $\bar{\kappa} = Ord^M$ is
allowed.

Note that $P\in\bar{K}$ and moreover a standard comparison argument
shows that $\bar{K}$ is an initial segment of $K$. Consider the
structure $N=\la \bar{K}, \in,W\ra$ where $W=U\cap\bar{K}$, and
observe that $W$ is a weakly amenable $\bar{K}$-ultrafilter. Note
that cf($\bar{\kappa})=\kappa$. If $\bar{\kappa}=(\kappa^+)^M$, this
follows since $M$ is a $\kappa$-model. Otherwise, consider the inner
model $W_M=\cup_{\alpha\in Ord}H^{M_\alpha}_{\kappa_\alpha}$
obtained by iterating the ultrafilter $U$ out through the ordinals,
in which $\bar{\kappa}$ remains the $K$-successor of $\kappa$, and
apply the Weak Covering Lemma to $\bar{\kappa}$. Hence $N$ is a
premouse iterable by the ultrafilter $W$. This allows us to
coiterate $N$ with $K$. We note that for no $\mu<\kappa$ do we have
$o^K(\mu)\geq\kappa$, , that is $\kappa$ is not overlapped by any
extender on a critical point $\mu$ below $\kappa$ since otherwise
the ultrafilter $W$ would generate the sharp for an inner model with
a strong cardinal and we are only considering $K$ build using
non-overlapping extenders. Hence if $K$ were to move in this
coiteration, either $\bar{\kappa}=(\kappa^+)^K$ and $\kappa$ is
measurable in $K$ (and hence already strongly Ramsey) or else $K$ is
first truncated to some $N'\in K$, $N'=\la \bar{K},\in, F\ra$ with a
weakly amenable $\bar{K}$-ultrafilter $F$. The next paragraph shows
that $N'$ witnesses the strong Ramsey property for $A$.

It remains to show that $\bar{K}^{<\kappa}\subseteq\bar{K}$ in $K$.
Fix $\alpha<\kappa$ and $f : \alpha \to \bar{K}$ in $K$. Without
loss of generality we shall assume that $f$ is 1-1. Since $M$ is a
$\kappa$-model, we have $f\in M$. Suppose $f (\gamma) \in
\bar{K_{}}_{\delta (\gamma)}$ for some $\delta (\gamma) <
\bar{\kappa}$. Let $\delta = \sup_{\gamma < \alpha} \delta
(\gamma)$. Then $\delta < \bar{\kappa}$. Let $G \in \bar{K}$ be such
that $G : \kappa \to\bar{K_{}}_{\delta}$ is a bijection. Then if $C
=G^{- 1}$``$\tmop{ran} (f)$, then $C$ is a bounded subset of
$\kappa$ with $C \in K$. However $C \in V_{\kappa} \rightarrow C \in
K^M$. As both $f, G$ are (1-1) there is a permutation $\pi : \alpha
\longrightarrow C$ with $f = G \circ \pi$. Again $\pi \in V_{\kappa}
\cap K \cap M$. Hence $f \in \bar{K}$.
\end{proof}
\begin{proposition}
If $\kappa$ is super Ramsey, then $\kappa$ is super Ramsey in $K$.
\end{proposition}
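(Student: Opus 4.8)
The plan is to run the argument for the strongly Ramsey case essentially unchanged, and then add one step establishing the stationary correctness of the witnessing model in $K$. First I would invoke super Ramseyness not for the bare set $A$, but for a subset of $\kappa$ coding both $A$ and a code for the structure $P=\langle J_\alpha^{E^K},\in,E^K\rangle$ containing $A$; this produces a $\kappa$-model $M\prec\her{\kappa}$ with $P\in M$ together with a weakly amenable $M$-ultrafilter $U$ on $\kappa$ (equivalently, a $\kappa$-powerset preserving embedding $j:M\to N$). Since $M$ is a $\kappa$-model we have $V_\kappa\in M$, and since $M\prec\her{\kappa}$ the model computes $K$ correctly, so $M\models A\in K$ automatically and, exactly as before, $K_\kappa$ is an initial segment of $K^M$. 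Setting $\bar\kappa=(\kappa^+)^{K^M}$ and $\bar K=K_{\bar\kappa}^M$, the comparison argument shows $\bar K$ is an initial segment of the true $K$, the structure $\langle\bar K,\in,W\rangle$ with $W=U\cap\bar K$ is a weakly amenable iterable premouse, and coiterating it against $K$ yields a weakly amenable $\bar K$-ultrafilter $F$ witnessing the embedding (after possibly truncating $K$ to $N'=\langle\bar K,\in,F\rangle$, or else in the degenerate case where $\kappa$ turns out to be measurable in $K$ and hence already super Ramsey there). The verification that $\bar K^{<\kappa}\subseteq\bar K$ in $K$ is identical to the strongly Ramsey argument.

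The one new ingredient needed for super rather than strong Ramseyness is that the witnessing universe $\bar K$ is an elementary submodel of $\her{\kappa}^K=K_{\kappa^+}$. This is where I would use the definability of $K$ up to $\kappa^+$ over $\her{\kappa}$ together with $M\prec\her{\kappa}$. Fix the formula $\Phi$ defining $\her{\kappa}^K$ over $\her{\kappa}$. From the point of view of $M$ the set $\bar K$ is precisely $(\her{\kappa}^K)^M$, the object defined by $\Phi$ inside $M$. For a formula $\phi$ and parameters $\bar a\in\bar K$, satisfaction in the transitive set $\bar K$ is absolute, so $\bar K\models\phi(\bar a)$ iff $M\models\text{``}\her{\kappa}^K\models\phi(\bar a)\text{''}$; by $M\prec\her{\kappa}$ this holds iff $\her{\kappa}\models\text{``}\her{\kappa}^K\models\phi(\bar a)\text{''}$, which by the definability of $K$ says exactly that the true $K_{\kappa^+}\models\phi(\bar a)$. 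Hence the inclusion $\bar K\hookrightarrow\her{\kappa}^K$ is elementary, which is the stationary correctness required of a super Ramsey witness in $K$.

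The main obstacle I expect is ensuring that $M$'s computation of $K$ genuinely agrees with the real $K$ below $\bar\kappa$, so that $\bar K=K_{\bar\kappa}$ really is an initial segment of $K$ and $\Phi$ defines the same object inside $M$ as in $V$. This is exactly the content of the core model machinery granted in the preamble: the Weak Covering Lemma applied to $\bar\kappa$ (via the inner model obtained by iterating $U$ through the ordinals, as in the strongly Ramsey proof) guarantees $\bar\kappa$ is computed correctly, and the uniform definability of $K$ up to $\kappa^+$ secures the transfer of elementarity. A secondary point to check is that the possible truncation of $K$ in the coiteration leaves the underlying universe $\bar K$ unchanged, so that the elementarity $\bar K\prec\her{\kappa}^K$ established above is not disturbed; since the truncation only alters the top predicate and not the sets of $\bar K$, this is immediate.
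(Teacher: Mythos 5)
Your proposal is correct and follows essentially the same route as the paper: the paper's proof likewise reduces to the strongly Ramsey argument and obtains the required stationary correctness from the single observation that $M\prec\her{\kappa}$ together with the definability of $K$ over $\her{\kappa}$ yields $K^M\prec K_{\kappa^+}$, taking $\bar{K}=K^M\cap K_{\bar\kappa}$ with $\bar\kappa=(\kappa^+)^K$ (which agrees with your $\bar K$ since $M\prec\her{\kappa}$ forces $M$ to compute $K$ and its successor of $\kappa$ correctly). Your spelled-out elementarity transfer and the remark that truncation in the coiteration does not disturb the underlying universe are just elaborations of what the paper leaves implicit.
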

\begin{proof}
Note that if $M \prec H_{\kappa^+}$, then $K^M \prec
(K)^{H_{\kappa^+}} = K_{\kappa^+}$. Let $\bar{\kappa}=(\kappa^+)^K$.
Then also $K^M\cap K_{\bar{\kappa}}\prec K_{\bar{\kappa}}$. Let
$\bar{K}=K^M\cap K_{\bar{\kappa}}$. Now argue as in the last
proposition using $N=\la\bar{K},\in,U\cap \bar{K}\ra$ where $U$ is
the filter weakly amenable to $M$.
\end{proof}
\section{Virtually Ramsey cardinals}\label{sec:vram}
In \cite{welch:ramsey}, Sharpe and Welch defined a new large
cardinal notion, the \emph{virtually Ramsey} cardinal. Virtually
Ramsey cardinals are defined by an apparently weaker statement about
the existence of good indiscernibles than Ramsey cardinals. The
definition was motivated by the conditions needed to get an upper
bound on the consistency strength of the Intermediate Chang's
Conjecture. In this section, we separate the notions of Ramsey and
virtually Ramsey cardinals using an old forcing argument of Kunen's
showing how to destroy and then resurrect a weakly compact cardinal
\cite{kunen:saturatedideals}.
\begin{definition}\label{def:goodind}
Suppose $\kappa$ is a cardinal and $A\subseteq\kappa$. Then
$I\subseteq\kappa$ is a \emph{good set of indiscernibles} for
$\lset{\kappa}$ if for all $\gamma\in I$:
\begin{itemize}
\item[(1)] $\la L_\gamma[A\cap \gamma],A\cap \gamma\ra\prec \lset{\kappa}$.
\item[(2)] $I\setminus\gamma$ is a set of indiscernibles for
$\lsetex{\kappa}_{\xi\in\gamma}$.
\end{itemize}
\end{definition}
\begin{remark}\label{rem:goodind}
If for every $A\subseteq\kappa$, there is $\gamma<\kappa$ such $\la
L_\gamma[A\cap\gamma], A\cap\gamma\ra\prec \la L_\kappa[A],A\ra$,
then it is easy to see that $\kappa$ must be inaccessible. Also, if
$I$ is a set of good indiscernibles for $\la L_\kappa[A],A\ra$ and
$|I|\geq 3$, then by clause (2), every $\gamma\in I$ is inaccessible
in $\la L_\kappa[A],A\ra$.
\end{remark}
\begin{theorem}
A cardinal $\kappa$ is Ramsey if and only if for every
$A\subseteq\kappa$, the structure $\lset{\kappa}$ has a good set of
indiscernibles of size $\kappa$.
\end{theorem}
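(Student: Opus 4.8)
The plan is to prove both directions by connecting good sets of indiscernibles to weakly amenable countably complete $M$-ultrafilters, using Mitchell's characterization of Ramsey cardinals cited in the introduction: $\kappa$ is Ramsey iff every $A\subseteq\kappa$ lies in a weak $\kappa$-model $M$ carrying a weakly amenable countably complete $M$-ultrafilter on $\kappa$. The main technical engine in both directions is the equivalence, via Lemma \ref{le:silverult} and Lemma \ref{le:iterable}, between good commuting systems of embeddings arising from indiscernibles and $\alpha$-good ultrafilters, now pushed to full iterability ($\alpha=\omega_1$) combined with countable completeness.

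For the forward direction, suppose $\kappa$ is Ramsey and fix $A\subseteq\kappa$. Using Mitchell's characterization, take a weak $\kappa$-model $M$ with a weakly amenable countably complete $M$-ultrafilter $U$ on $\kappa$; by Theorem \ref{th:gaifman} this is fully iterable, so we obtain the iterated ultrapower embeddings $j_{\xi\gamma}$ of length $\kappa^+$ (or longer). The images of $\kappa$ under the iteration, $\{\kappa_\xi\mid \xi<\kappa\}$, form the candidate good set of indiscernibles for $\langle L_\kappa[A],A\rangle$. First I would arrange (shrinking $M$ as in Lemma \ref{le:countheight2}) that $L_\kappa[A]$ sits inside the iteration in a controlled way, so that the critical sequence $\langle\kappa_\xi\rangle$ is genuinely a set of indiscernibles: clause (2) of Definition \ref{def:goodind} follows because the iteration maps fix the structure below $\kappa_\xi$ and shift everything at or above $\kappa_\xi$ uniformly, exactly the indiscernibility pattern of Lemma \ref{le:silverult}; clause (1), the elementarity $\langle L_{\gamma}[A\cap\gamma],A\cap\gamma\rangle\prec\langle L_\kappa[A],A\rangle$ for $\gamma\in I$, follows from the factor-embedding structure of the iteration, since each $\kappa_\xi$ is the critical point of a tail embedding whose restriction to the $L[A]$-hierarchy is elementary into $\langle L_\kappa[A],A\rangle$. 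Producing a set of size $\kappa$ requires iterating $\kappa$-many times and checking the critical points remain below $\kappa$, which they do because $\kappa$ is regular and the ultrapower increases the critical point minimally at each step.

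For the converse, suppose every $A\subseteq\kappa$ yields a good set $I$ of indiscernibles of size $\kappa$ for $\langle L_\kappa[A],A\rangle$. I would use $I$ to build, via the blueprint of Lemma \ref{le:silverult}, a good commuting system of elementary embeddings on a weak $\kappa$-model containing $A$ with first critical point some $i\in I$, then invoke Lemma \ref{le:iterable} to extract an $\omega_1$-good (hence fully iterable, by Theorem \ref{th:gaifman}) $M$-ultrafilter. The crucial additional point, which Mitchell's characterization demands, is \emph{countable completeness} of the resulting ultrafilter; this is where the \emph{goodness} of the indiscernibles (as opposed to mere indiscernibility) does the work, guaranteeing that the diagonal intersection behavior needed for countable completeness holds because the indiscernibles are cofinal and closed enough to absorb any externally-given countable sequence of measure-one sets. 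The hard part is precisely this step: converting a \emph{set} of good indiscernibles of order type $\kappa$ (a combinatorial object) into a \emph{countably complete} ultrafilter, since indiscernibles of length only $\kappa$ give full iterability directly (length $\omega_1\leq\kappa$ suffices) but countable completeness is a genuinely different, external condition that must be verified by hand using the richness of the good indiscernible set and the fact that $\kappa$ has uncountable cofinality.
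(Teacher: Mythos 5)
The paper offers no proof of this theorem; it simply refers the reader to \cite{donder:goodinds}. Judged on its own merits, your sketch has a fatal error in each direction. In the forward direction you propose to take as the good set of indiscernibles the critical sequence $\{\kappa_\xi\mid\xi<\kappa\}$ of the iterated ultrapower of a weakly amenable countably complete $M$-ultrafilter on $\kappa$. But that sequence satisfies $\kappa=\kappa_0<\kappa_1<\kappa_2<\cdots$: each later critical point is an image of $\kappa$ under an earlier iteration map, hence lies strictly above $\kappa$. These ordinals cannot form a subset of $\kappa$, let alone one of size $\kappa$, so they cannot be indiscernibles for $\langle L_\kappa[A],A\rangle$, and no shrinking of $M$ repairs this \emdash\ iterating an ultrafilter on $\kappa$ only ever produces ordinals $\geq\kappa$. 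The standard way to extract indiscernibles \emph{below} $\kappa$ from Mitchell's characterization is quite different: one uses weak amenability together with normality to show that $U$ has a Rowbottom property for functions in $M$ (every $F:[\kappa]^{<\omega}\to 2$ in $M$ has a homogeneous set in $U$), and then refines a homogeneous set, again via normality and by intersecting with the club of $\gamma$ with $\langle L_\gamma[A\cap\gamma],A\cap\gamma\rangle\prec\langle L_\kappa[A],A\rangle$, to secure clauses (1) and (2) of Definition \ref{def:goodind}.

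The converse has a parallel problem. A good commuting system built from $I$ as in Lemma \ref{le:silverult} has critical points that are themselves members of $I$, hence ordinals below $\kappa$, and the ultrafilters that Lemma \ref{le:iterable} then produces live on those indiscernibles, not on $\kappa$. This is precisely how the paper uses these two lemmas in the Erd\H{o}s argument \emdash\ to show that the \emph{indiscernibles} are iterable inside a collapsed model \emdash\ and it cannot yield an $M$-ultrafilter on $\kappa$ itself for a weak $\kappa$-model of height above $\kappa$. In fact the converse needs none of this machinery: given $F:[\kappa]^{<\omega}\to 2$, code $F$ by a set $A\subseteq\kappa$ so that $F$ is definable over $\langle L_\kappa[A],A\rangle$; any set of indiscernibles of size $\kappa$ for that structure is homogeneous for $F$, which gives $\kappa\rightarrow(\kappa)^{<\omega}_2$ and hence Ramseyness directly (goodness is not even needed for this direction). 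Your instinct that countable completeness is where goodness does its work is therefore misplaced; goodness earns its keep in the forward direction, in arranging clause (2).
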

See \cite{donder:goodinds} for details on good sets of
indiscernibles and proof of above theorem. In general, for
$A\subseteq\kappa$, let $\mathscr I_A=\{\alpha\in\kappa\mid$ there
is an unbounded set of good indiscernibles $I_\alpha\subseteq\alpha$
 for $\lset{\kappa}\}$.
\begin{definition}
A cardinal $\kappa$ is \emph{virtually Ramsey} if for every
$A\subseteq\kappa$, the set $\I_A$ contains a club.\footnote{The
original definition in \cite{welch:ramsey} required that $\I_A$
contain only an $\omega_1$-club.}
\end{definition}
There is no obvious reason to suppose that the good sets of
indiscernibles below each of the ordinals in $\I_A$ can be glued
together into a good set of indiscernibles of size $\kappa$,
suggesting that virtually Ramsey cardinals are not necessarily
Ramsey. First, we make some easy observations about virtually Ramsey
cardinals.
\begin{proposition}
Ramsey cardinals are virtually Ramsey.
\end{proposition}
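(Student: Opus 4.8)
The plan is to exploit the characterization of Ramsey cardinals by good indiscernibles stated above, together with the elementary fact that the property of being a good set of indiscernibles is inherited by subsets. Fix $A\subseteq\kappa$. Since $\kappa$ is Ramsey, the theorem above supplies a good set of indiscernibles $I$ for $\lset{\kappa}$ with $|I|=\kappa$. As $\kappa$ is Ramsey it is inaccessible, hence regular, so a subset of size $\kappa$ cannot be bounded; thus $I$ is unbounded in $\kappa$. The goal is then to produce a club inside $\I_A$, and the natural candidate is the set of limit points of $I$.

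First I would record the key hereditary observation: if $J\subseteq I$, then $J$ is again a good set of indiscernibles for $\lset{\kappa}$. Clause (1) of Definition \ref{def:goodind} is a statement about each individual $\gamma\in I$ separately, so it continues to hold for every $\gamma\in J$; and clause (2) asserts that $I\setminus\gamma$ is a set of indiscernibles for $\lsetex{\kappa}_{\xi\in\gamma}$, a property trivially preserved when we pass to the smaller set $J\setminus\gamma$. Next I would set $C=\{\alpha<\kappa\mid \sup(I\cap\alpha)=\alpha\}$, the set of limit points of $I$ below $\kappa$. This $C$ is closed, since a limit of limit points of $I$ is again a limit point of $I$, and it is unbounded: given any $\beta<\kappa$, the supremum of a strictly increasing $\omega$-sequence chosen from $I$ above $\beta$ is a limit point of $I$ lying strictly between $\beta$ and $\kappa$ (here the uncountable cofinality of $\kappa$ keeps this supremum below $\kappa$). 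Hence $C$ is a club in $\kappa$.

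Finally I would show $C\subseteq\I_A$. For $\alpha\in C$, the set $I_\alpha:=I\cap\alpha$ is unbounded in $\alpha$ by the definition of $C$, and by the hereditary observation it is a good set of indiscernibles for $\lset{\kappa}$; this is exactly what membership of $\alpha$ in $\I_A$ requires, so $\alpha\in\I_A$. Therefore $\I_A$ contains the club $C$, which is precisely the definition of $\kappa$ being virtually Ramsey. I do not expect any serious obstacle here: the only step needing genuine verification is that both defining clauses of good indiscernibility descend to subsets, and since each clause is manifestly hereditary the argument is routine once the characterization theorem is invoked.
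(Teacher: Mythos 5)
Your proposal is correct and is essentially the paper's own proof: take a good set of indiscernibles $I$ of size $\kappa$ from the characterization theorem and observe that the club of limit points of $I$ lies in $\I_A$. The paper simply states this last step as "clear," whereas you fill in the (routine but worthwhile) verification that goodness is inherited by subsets and that the limit points form a club.
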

\begin{proof}
Suppose $\kappa$ is a Ramsey cardinal. If $A\subseteq\kappa$, then
there is a good set indiscernibles $I$ of size $\kappa$ for the
structure $\lset{\kappa}$. Clearly the club of all limit points of
$I$ is contained in $\I_A$. This verifies that $\kappa$ is virtually
Ramsey.
\end{proof}
The next proposition confirms that being a virtually Ramsey cardinal
is a large cardinal notion.
\begin{proposition}
Virtually Ramsey cardinals are Mahlo.
\end{proposition}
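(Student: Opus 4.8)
The plan is to check the two defining features of a Mahlo cardinal: that $\kappa$ is inaccessible, and that the inaccessible cardinals below $\kappa$ are stationary. For inaccessibility, note that virtual Ramseyness makes each $\I_A$ nonempty, so for every $A\subseteq\kappa$ I can fix $\alpha\in\I_A$ together with an unbounded good set of indiscernibles $I_\alpha\subseteq\alpha$ for $\lset{\kappa}$. Taking any $\gamma\in I_\alpha$, clause (1) of Definition \ref{def:goodind} gives $\la L_\gamma[A\cap\gamma],A\cap\gamma\ra\prec\lset{\kappa}$, so every $A$ has such a reflecting $\gamma<\kappa$. By the first assertion of Remark \ref{rem:goodind}, $\kappa$ is inaccessible.

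For stationarity I would argue by contradiction: suppose $C\subseteq\kappa$ is a club containing no inaccessible cardinal. Using that $\kappa$ is inaccessible, I fix a bijection between $\kappa$ and $V_\kappa$ and let $A\subseteq\kappa$ code, via a fixed G\"odel pairing, both the membership relation $\in\restrict V_\kappa$ and (on a reserved coordinate) the club $C$. The point of coding the genuine $V_\kappa$ is that $L_\kappa[A]$ then contains every $V_\delta$ with $\delta<\kappa$, so cofinal maps and power sets below $\kappa$ are available inside $\lset{\kappa}$; consequently an ordinal is inaccessible in $\lset{\kappa}$ exactly when it is genuinely inaccessible in $V$. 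Applying the virtual Ramsey property to this $A$ produces a club $D\subseteq\I_A$.

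Now fix any $\alpha\in D$ and an unbounded good set of indiscernibles $I_\alpha\subseteq\alpha$ for $\lset{\kappa}$. Since $|I_\alpha|\geq 3$, the second assertion of Remark \ref{rem:goodind} makes every $\gamma\in I_\alpha$ inaccessible in $\lset{\kappa}$, hence, by the correctness of the coding, genuinely inaccessible in $V$. On the other hand, clause (1) gives $\la L_\gamma[A\cap\gamma],A\cap\gamma\ra\prec\lset{\kappa}$, and reflecting the sentence ``$C$ is unbounded'' (true in $\lset{\kappa}$ since $C$ is a club) shows that $C\cap\gamma$ is unbounded in $\gamma$; as $C$ is closed, $\gamma\in C$. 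Thus each $\gamma\in I_\alpha$ is an inaccessible cardinal lying in $C$, contradicting the choice of $C$. Hence the inaccessibles below $\kappa$ are stationary and $\kappa$ is Mahlo.

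The main obstacle is the correctness of the coding step: I must ensure that inaccessibility as computed inside $\lset{\kappa}$ coincides with true inaccessibility, which is exactly why $A$ is chosen to code the \emph{actual} $V_\kappa$ rather than merely $C$, and I must check that $C$ is recovered from $A$ by a formula absolute enough that ``$C$ is unbounded'' reflects to ``$C\cap\gamma$ is unbounded'' through the clause-(1) elementarity. Both points rely on the reflecting indiscernibles being closed under the G\"odel pairing, which holds because they are already inaccessible in $\lset{\kappa}$.
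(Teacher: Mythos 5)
Your proof is correct and follows essentially the same route as the paper's: inaccessibility via the reflection clause of Remark \ref{rem:goodind}, and Mahloness by coding $V_\kappa$ (equivalently $H_\kappa$) together with a given club $C$ into a subset $A$ of $\kappa$, then observing that any good indiscernible for $\lset{\kappa}$ lies in $C$ by clause (1) and is genuinely inaccessible because the structure contains all of $H_\kappa$ and is therefore correct about inaccessibility. The only difference is the cosmetic contradiction framing and your (correct) extra care about recovering $C$ from $A\cap\gamma$ via closure of the indiscernibles under the pairing function.
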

\begin{proof}
Suppose $\kappa$ is virtually Ramsey. By remark \ref{rem:goodind},
$\kappa$ is inaccessible. To see that $\kappa$ is Mahlo, let
$A\subseteq\kappa$ code $H_\kappa$ and $C\subseteq\kappa$ be a club.
If $I$ is any good set of indiscernibles for $L_\kappa[A,C]$ and
$\gamma\in I$, then $\gamma\in C$ by (1) of \ref{def:goodind}. By
remark \ref{rem:goodind}, $L_\kappa[A,C]$ thinks that $\gamma$ is
inaccessible but it is correct about this since it contains all of
$H_\kappa$.
\end{proof}
Next, we give a sufficient condition needed to glue the good sets of
indiscernibles below the ordinals in $\I_A$ into a good set of
indiscernibles of size $\kappa$.
\begin{proposition}
If a cardinal is virtually Ramsey and weakly compact, then it is
Ramsey.
\end{proposition}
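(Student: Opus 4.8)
The plan is to use weak compactness to reflect a good set of indiscernibles from an ``imaginary'' level $j(\kappa)$ back down to $\kappa$, thereby gluing the short sets of indiscernibles guaranteed by virtual Ramseyness into a single good set of size $\kappa$. The target is to produce, for an arbitrary $A\subseteq\kappa$, a good set of indiscernibles of size $\kappa$ for $\lset{\kappa}$, and then invoke the characterization of Ramsey cardinals stated above.

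First I would fix $A\subseteq\kappa$ and, by virtual Ramseyness, choose a club $C\subseteq\I_A$ together with, for each $\alpha\in C$, a witnessing unbounded good set of indiscernibles $I_\alpha\subseteq\alpha$ for $\lset{\kappa}$. I would code $A$, $C$, and the sequence $\langle I_\alpha\mid\alpha\in C\rangle$ into a single set $A^*\subseteq\kappa$. Since $\kappa$ is weakly compact, there is a weak $\kappa$-model $M$ with $A^*\in M$ and an elementary embedding $j:M\to N$ with critical point $\kappa$. Because $L_\kappa[A]$ and satisfaction in the set-sized structure $\lset{\kappa}$ are absolute between $M$ and $V$, the witnesses $I_\alpha$ now present in $M$ certify that $M\models``C\subseteq\I_A"$. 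Securing this is the first point requiring care, as $M$ need not otherwise compute $\I_A$ correctly.

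Applying $j$ yields $N\models``j(C)\subseteq\I_{j(A)}"$ with $j(C)$ club in $j(\kappa)$. Since $C=j(C)\cap\kappa$ is unbounded in $\kappa$ and $j(C)$ is closed in $N$, we get $\kappa\in j(C)$, whence $N\models\kappa\in\I_{j(A)}$. Thus there is, in $N$, an unbounded $I\subseteq\kappa$ that is a good set of indiscernibles for the taller structure $\langle L_{j(\kappa)}[j(A)],j(A)\rangle$; because $N$ is transitive and $\kappa$ is regular, $I$ is a genuine set of $V$ of size $\kappa$.

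It remains to check that $I$ is in fact a good set of indiscernibles for $\lset{\kappa}$ itself. The key observation is that $\lset{\kappa}\prec\langle L_{j(\kappa)}[j(A)],j(A)\rangle$: every element of $L_\kappa[A]$ has hereditary size $<\kappa$ in $M$ (as $\kappa=\mathrm{crit}(j)$ is a cardinal there), so $j$ fixes $L_\kappa[A]$ pointwise; since $j(A)\cap\kappa=A$ and $j$ carries the structure $\lset{\kappa}$ of $M$ to $\langle L_{j(\kappa)}[j(A)],j(A)\rangle$ of $N$, elementarity of $j$ shows the identity map between these two structures is elementary. Granting this, for each $\gamma\in I$ the chain $L_\gamma[A\cap\gamma]\subseteq L_\kappa[A]\subseteq L_{j(\kappa)}[j(A)]$ together with $j(A)\cap\gamma=A\cap\gamma$ transfers clause (1) of Definition \ref{def:goodind} down to $\lset{\kappa}$, and since all relevant parameters lie below $\kappa$, the elementary substructure relation transfers clause (2) as well. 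Hence $I$ witnesses a good set of indiscernibles of size $\kappa$ for $\lset{\kappa}$, and $\kappa$ is Ramsey. I expect the main obstacle to be the bookkeeping that forces $M\models``C\subseteq\I_A"$, and the verification that good indiscernibles for the taller structure restrict correctly to the elementary substructure $\lset{\kappa}$.
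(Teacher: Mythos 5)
Your proof is correct and follows essentially the same route as the paper: reflect the statement $C\subseteq\I_A$ via a weakly compact embedding $j:M\to N$, note $\kappa\in j(C)$, and pull the resulting good set of indiscernibles for $\langle L_{j(\kappa)}[j(A)],j(A)\rangle$ down to $\lset{\kappa}$ using $j(A)\cap\kappa=A$. The only cosmetic difference is that you arrange $M\models C\subseteq\I_A$ by coding the witnesses $I_\alpha$ into the model, whereas the paper simply puts $V_\kappa$ into $M$ so that $M$ computes $\I_A$ correctly; both work, and your added detail on why goodness transfers to the shorter structure is a faithful expansion of the paper's ``it is easy to see.''
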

\begin{proof}
Suppose $\kappa$ is virtually Ramsey and weakly compact. We will
argue that we can glue together the good sets of indiscernibles
coming from the different ordinals of the club contained in $\I_A$
into a good set of indiscernibles of size $\kappa$. Fix
$A\subseteq\kappa$ and let $C$ be a club contained in $\I_A$. Fix
any weak $\kappa$-model $M$ containing $A$, $C$ and $V_\kappa$ as
elements. By weak compactness, there exists an embedding $j:M\to N$
with critical point $\kappa$. Observe that $M\models C\subseteq
\I_A^M$, where $\I_A^M$ is the set $\I_A$ defined from the
perspective of $M$. By elementarity $N\models j(C)\subseteq
\I_{j(A)}^N$. Since $\kappa\in j(C)$, it follows that there is a
good set of indiscernibles for $\la L_{j(\kappa)}[j(A)],j(A)\ra$
below $\kappa$. But since $j(A)\cap \kappa=A$, it is easy to see
that this is a good set of indiscernibles for $\lset{\kappa}$ as
well. This completes the proof that $\kappa$ is Ramsey.
\end{proof}

Our strategy to separate virtually Ramsey and Ramsey cardinals will
be to start with a Ramsey cardinal and force to destroy its weak
compactness while preserving virtual Ramseyness. Although ideally we
would like to start with a Ramsey cardinal, we will have to start
with a strongly Ramsey cardinal instead. The reason being that
strongly Ramsey cardinals have embeddings on sets with
$<\kappa$-closure that is required for indestructibility techniques.
The argument below was worked out jointly with Joel David Hamkins
and we would like to thank him for his contribution.

The forcing we use is Kunen's well-known forcing from
\cite{kunen:saturatedideals} to destroy and then resurrect weak
compactness. The next lemma is a key observation in the argument.
\begin{lemma}\label{lem:vrindes}
If $\p$ is a $<\kappa$-distributive, stationary preserving forcing,
$G\subseteq\p$ is $V$-generic and $\kappa$ is virtually Ramsey in
$V[G]$, then $\kappa$ was already virtually Ramsey in $V$.
\end{lemma}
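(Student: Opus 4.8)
The plan is to show that the set $\I_A$ is computed identically in $V$ and in $V[G]$, and then to descend a club contained in $\I_A$ from $V[G]$ down to $V$ using stationary preservation. First I would fix an arbitrary $A\subseteq\kappa$ in $V$; the goal is to produce, in $V$, a club contained in $\I_A$. Since $A$ and $\kappa$ are unchanged by the forcing, the structure $\lset{\kappa}$ is literally the same in $V$ and in $V[G]$, and for any $I\subseteq\alpha$ the assertion that $I$ is an unbounded good set of indiscernibles for $\lset{\kappa}$ (in the sense of Definition \ref{def:goodind}) is a first-order statement about satisfaction in this fixed set-sized structure, hence absolute between $V$ and $V[G]$.

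The next step exploits $<\kappa$-distributivity: such forcing adds no new $\gamma$-sequences for $\gamma<\kappa$, so in particular $\Power(\alpha)^{V}=\Power(\alpha)^{V[G]}$ for every $\alpha<\kappa$. Consequently every witnessing set of good indiscernibles $I_\alpha\subseteq\alpha$ appearing in the $V[G]$-computation of $\I_A$ already lies in $V$, and conversely any such set in $V$ still witnesses membership in $V[G]$ by the absoluteness just noted. This yields $\I_A^{V}=\I_A^{V[G]}$. (The same distributivity keeps $\kappa$ regular in $V$, since a cofinal map $\lambda\to\kappa$ with $\lambda<\kappa$ in $V$ would persist into $V[G]$; so speaking of clubs in $\kappa$ in $V$ is legitimate.)

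By virtual Ramseyness in $V[G]$ the set $\I_A^{V[G]}=\I_A^{V}$ contains a club in $V[G]$, equivalently $\kappa\setminus\I_A^{V}$ is nonstationary in $V[G]$. Here is where stationary preservation enters: the set $\kappa\setminus\I_A^{V}$ belongs to $V$, and were it stationary in $V$ it would remain stationary in $V[G]$, contradicting nonstationarity there. Hence $\kappa\setminus\I_A^{V}$ is nonstationary in $V$, i.e.\ $\I_A^{V}$ contains a club in $V$. Since $A$ was arbitrary, $\kappa$ is virtually Ramsey in $V$.

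I expect the genuine content to lie entirely in the identity $\I_A^{V}=\I_A^{V[G]}$: one must be sure that $<\kappa$-distributivity really captures ``no new bounded subsets of $\kappa$,'' so that no witnessing indiscernible set can materialize in $V[G]$ without already existing in $V$. Once that is secured, the passage from a $V[G]$-club to a $V$-club is the routine stationary-preservation argument, and this is exactly the role played by the two forcing hypotheses in the statement.
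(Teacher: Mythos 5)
Your proof is correct and follows essentially the same route as the paper's: use $<\kappa$-distributivity to get $\I_A^{V}=\I_A^{V[G]}$, then apply stationary preservation to the complement of $\I_A$ to pull the club back to $V$. The extra details you supply (absoluteness of goodness over the fixed structure $\lset{\kappa}$, regularity of $\kappa$) are fine and just make explicit what the paper leaves implicit.
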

\begin{proof}
Fix $A\subseteq\kappa$. Since $\p$ is $<\kappa$-distributive, it
cannot add any new good sets of indiscernibles to ordinals
$\alpha<\kappa$. It follows that $\I_A=\I_A^{V[G]}$. If $\I_A$ does
not contain a club in $V$, then the complement $\overline {\I}_A$ is
stationary in $V$. Since $\p$ is stationary preserving, $\overline
{\I}_A$ remains stationary in $V[G]$. This is clearly a
contradiction since $\kappa$ is virtually Ramsey in $V[G]$ and hence
$\I_A$ contains a club.
\end{proof}

First, we define a forcing $\q$ to add a Souslin tree $T$ together
with a group of automorphisms $\mathscr G$ that acts
\emph{transitively} on $T$. A group of automorphisms $\mathscr G$ of
a tree $T$ is said to act transitively if for every $a$ and $b$ on
the same level of $T$, there is $\pi\in \mathscr G$ with $\pi(a)=b$.
The elements of $\q$ will be pairs $(t,f)$ where $t$ is a
\emph{normal} $\alpha+1$-tree for some $\alpha<\kappa$ such that
$Aut(t)$ acts transitively and
$f:\lambda\underset{\text{onto}}{\overset{1-1}{\to}}Aut(t)$ is some
enumeration of $Aut(t)$. We have $(t_1,f_1)\leq (t_0,f_0)$ when
\begin{itemize}
\item[(1)] $t_1$ end-extends $t_0$,
\item[(2)] for all $\xi\in Dom(f_0)$, $f_1(\xi)$ extends $f_0(\xi)$.
\end{itemize}
The strategy will be to force with $\q$ to add a Souslin tree $T$
thereby destroying the strong Ramseyness of $\kappa$ and then to
force with $T$ itself to resurrect it. The argument that the second
forcing resurrects the strong Ramseyness of $\kappa$ will rely on
the fact that the combined forcing $\q$ followed by $T$ has a dense
subset that is $<\kappa$-closed. It is to obtain this result that
the usual forcing to add a Souslin tree needs to be augmented with
the automorphism groups.

To show that the generic $\kappa$-tree $T$ added by $\q$ is Souslin,
we need to argue that every maximal antichain of $T$ is bounded. In
the usual forcing to add a Souslin tree, the conditions are normal
$\alpha+1$-trees and the argument is made by proving the Sealing
Lemma. The Sealing Lemma states that if a condition forces that
$\dot{A}$ is a name for a maximal antichain, then there is a
stronger condition forcing that it is bounded. The argument for the
Sealing Lemma goes as follows:\\
Suppose $t_0\forces \dot{A}$ is a maximal antichain of $\dot{T}$.
Choose $t_1\leq t_0$ such that for every $s\in t_0$, there is
$a_s\in t_1$ compatible with $s$ and $t_1\forces a_s\in \dot{A}$.
Build a sequence $\cdots\leq t_n\leq \cdots\leq t_1\leq t_0$ such
that for every $s\in t_n$, there is $a_s\in t_{n+1}$ compatible with
$s$ and $t_{n+1}\forces a_s\in \dot{A}$. Let $t$ be the union of
$t_n$ and build the top level of $t$ by adding a branch through
every pair $s$ and $a_s$. Since every new branch passes through an
element of $\dot{A}$, this \emph{seals} the antichain. We will carry
out a similar argument with the forcing $\q$, but it will be
complicated by the fact that whenever we add a node on top of a
branch $B$, we need to add nodes on top of branches $f(\xi)``B$.
While $B$ passes through an element of $\dot{A}$, there is no reason
why $f(\xi)`` B$ should. In fact, since the automorphism groups act
transitively, it will suffice to add a single carefully chosen
branch to the limit tree of the conditions and take the limit level
to be all the images of the branch under the automorphism group on
the second coordinate. Thus, we need to build our sequence of
conditions such that the limit of the trees on the sequence has a
branch all of whose images under the automorphism group go through
elements of the antichain.
\begin{lemma}[Sealing Lemma]
Suppose $p$ is a condition in $\q$, $\dot{T}$ is the canonical
$\q$-name for the generic $\kappa$-tree added by $\q$, and $p\forces
\dot{A}$ is a maximal antichain of $\dot{T}$. Then there is $q\leq
p$ forcing that $\dot{A}$ is bounded.
\end{lemma}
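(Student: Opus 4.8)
The plan is to mimic the classical Sealing Lemma for adding a Souslin tree, but carried out in a way that is compatible with the automorphism-group component of the conditions. Write $p=(t_0,f_0)$, and let $\alpha_0+1$ be the height of $t_0$. First I would build a descending $\omega$-sequence of conditions
\[
p=p_0\geq p_1\geq p_2\geq\cdots,\qquad p_n=(t_n,f_n),
\]
where each $t_{n+1}$ \emph{seals} $t_n$ in the following strengthened sense: for every node $s\in t_n$ there is a node $a_s\in t_{n+1}$ with $a_s\geq s$ and $p_{n+1}\forces a_s\in\dot A$ (using that $p\forces\dot A$ is a maximal antichain, such an $a_s$ can always be found below a suitable extension, and since $t_n$ has size $<\kappa$ this is a $<\kappa$-sized demand). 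Crucially, because the $f_n$ are required to extend one another on the common part of their domains, the automorphism groups $\mathit{Aut}(t_n)$ fit together coherently along the sequence, and I would additionally arrange that $\bigcup_n\mathrm{Dom}(f_n)$ exhausts whatever automorphisms appear, so that every automorphism of the limit tree is approximated cofinally by the $f_n(\xi)$.

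Next I would pass to the limit. Let $t_\omega=\bigcup_n t_n$ and $\delta=\sup_n(\alpha_n+1)$, so $t_\omega$ is a normal $\delta$-tree without a top level; likewise let $\mathscr G_\omega$ be the coherent limit of the automorphism approximations. The heart of the construction is the selection of a \emph{single} cofinal branch $B$ through $t_\omega$ with the property that every image $\pi``B$ under $\pi\in\mathscr G_\omega$ passes through an element of $\dot A$. I would construct $B$ by a fusion/back-and-forth recursion of length $\omega$: enumerate the countably many relevant automorphisms (or their approximations) and, at stage $n$, extend the finite approximation to $B$ far enough up $t_\omega$ that the $n$-th image is driven through one of the sealing nodes $a_s$ produced above, while remaining a genuine branch of $t_\omega$. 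The transitivity of the automorphism action is exactly what makes this possible: given where $\pi``B$ currently sits, transitivity lets me find within a single level a node through which an antichain element lies, and then pull this back along $\pi$ to dictate the next segment of $B$ itself.

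Finally I would define the extension $q\leq p$. Put a top level on $t_\omega$ consisting of $B$ together with all of its images $\pi``B$ for $\pi$ in the (coherent) automorphism group, obtaining a normal $\delta+1$-tree $t_q$ whose top automorphism group still acts transitively (because it is generated by the images of $B$); let $f_q$ extend the $f_n$ and enumerate $\mathit{Aut}(t_q)$. Then $q=(t_q,f_q)\in\q$ and $q\leq p$. Since every node on the new top level lies above some $a_s\in\dot A$, any generic branch of $\dot T$ extending a top node of $t_q$ meets $\dot A$ at or below height $\delta$; hence $q\forces\dot A\subseteq\dot T\restrict(\delta+1)$, so $q\forces``\dot A\text{ is bounded}"$, as required.

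The main obstacle is the branch-selection step: one cannot simply seal $B$ the way the classical argument seals a single branch, because the \emph{images} $\pi``B$ are not under direct control and need not inherit the sealing. Managing all images simultaneously forces the fusion recursion to interleave requirements for countably many automorphisms, and it is precisely here that the hypotheses that $\mathit{Aut}(t)$ act transitively and that $f$ enumerate it (so the relevant automorphisms are captured cofinally along the sequence) are indispensable. I expect that verifying $B$ can be continued at each stage without blocking a previously imposed requirement—i.e. the consistency of the recursion—will be the most delicate bookkeeping in the proof.
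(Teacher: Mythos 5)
Your overall strategy is the right one and matches the paper's in spirit: iteratively seal the antichain above every node, build a single distinguished branch, and use the pull-back-along-automorphisms trick so that every image of that branch under the automorphism group passes through a forced-in element of $\dot{A}$, then take the new top level to be the orbit of the branch. However, there is a genuine gap in the branch-selection step: you run the fusion for only $\omega$ stages and speak of ``the countably many relevant automorphisms,'' but nothing in the definition of $\q$ makes the automorphism groups countable. A condition $(t,f)$ has $f:\lambda\to Aut(t)$ a bijection for an arbitrary $\lambda<\kappa$, so already $Aut(t_0)$ may have size $\omega_1$ or larger; by transitivity and normality the new top level is the full orbit of the top node of $B$, which has size at least the width of $t_\omega$, and \emph{every} one of these (possibly uncountably many) images $\pi``B$ must pass through an element of $\dot{A}$ for the sealing to work. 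An $\omega$-length interleaving can steer the branch for only countably many of these requirements, so uncountably many top-level nodes may fail to lie above an antichain element and the resulting $q$ need not force $\dot{A}$ to be bounded.

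The paper repairs exactly this point by running the recursion much longer: choose $M\prec\her{\kappa}$ of size $<\kappa$ containing $\q$, $p$, $\dot{A}$ with $Ord^M\cap\kappa=\beta$ an initial segment of $\kappa$, fix in $M$ a bookkeeping function $\varphi:\kappa\to\kappa$ hitting every index cofinally often, and carry out the construction inside $M$ for what $M$ thinks is $\kappa$ many steps (externally, $\beta$ many steps). Since every condition appearing in the construction lies in $M$, all the ordinals $\lambda_\sigma$ enumerating the automorphism groups are below $\beta$, so the bookkeeping eventually (indeed cofinally often) attends to every automorphism index; at limit stages the distinguished branch and its images supply the limit levels, and only at the end, in $V$, is the top level added. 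If you replace your $\omega$-recursion by this $\beta$-length recursion with bookkeeping (or some equivalent device guaranteeing that all of $\bigcup_\sigma\lambda_\sigma$ is handled), the rest of your argument goes through as written.
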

\begin{proof}
Fix $p\forces \dot{A}$ is a maximal antichain of $\dot{T}$.  Let
$p=(t_0,f_0)$ with $t_0$ of height $\alpha+1$ and
$f_0:\lambda_0\underset{\text{onto}}{\overset{1-1}{\to}}Aut(t_0)$.
Choose some $M\prec\her{\kappa}$ of size $<\kappa$ containing $\q$,
$p$, and $\dot{A}$ with the additional property that
$Ord^M\cap\kappa=\beta$ is an initial segment of $\kappa$. We will
work inside $M$ to build a condition $(t,f)$ with $t$ of height
$\beta+1$ strengthening $(t_0,f_0)$ and sealing $\dot{A}$. We will
need a bookkeeping function
$\varphi:\kappa\underset{\text{onto}}{\to}\kappa$ with the property
that every $\xi$ appears in the range cofinally often. Notice that
by elementarity, $M$ contains some such function $\varphi$. Working
\emph{entirely inside} $M$, we carry out the following construction
for $\kappa$ many steps. By going to a stronger condition, we can
assume without loss of generality that there is $a\in t_0$ such that
$(t_0,f_0)\forces a\in\dot{A}$. Let $B_0$ be any branch through $a$
in $t_0$. Let $a_0$ be the top node of $B_0$. The node $a_0$ begins
the branch we are trying to construct. Let $(t_1,f_1)$ be a
condition in $\q$ strengthening $(t_0, f_0)$ and having the property
that for every $s\in t_0$, there is $a_s\in t_1$ compatible with $s$
such that $(t_1,f_1)\forces a_s\in\dot{A}$. Consult the bookkeeping
function $\varphi(1)=\gamma$. This will determine how $a_0$ gets
extended. If $\gamma\geq \lambda_0$, let $a_1$ be the node on the
top level of $t_1$ extending $a_0$. Otherwise, consider
$f_0(\gamma)$ and $f_0(\gamma)(a_0)=s$. Let $s'$ be on the top level
of $t_1$ above $s$ and $a_s$. Finally, let
$f_1(\gamma)^{-1}(s')=a_1$. This has the effect that no matter how
we extend $f_0(\gamma)$, the image under it of the branch we are
building will pass through $\dot{A}$. At successor stages
$\sigma+1$, we will extend the condition $(t_\sigma,f_\sigma)$ to a
condition $(t_{\sigma+1}, f_{\sigma+1})$ having the property that
for every $s\in t_\sigma$, there is $a_s\in t_{\sigma+1}$ compatible
with $s$ such that $(t_{\sigma+1},f_{\sigma+1})\forces
a_s\in\dot{A}$. Next, we will consult the bookkeeping function
$\varphi(\sigma)=\gamma$ and let it decide as above how $a_\sigma$
gets chosen. At limit stages $\lambda$, we will let $t_\lambda$ be
the union of $t_\xi$ for $\xi<\lambda$ and $f_\lambda$ be the
coordinate-wise union of $f_\xi$. Now use the branch through $a_\xi$
to define a limit level for $t_\lambda$, thereby extending to
$(t_{\lambda+1},f_{\lambda+1})$. From the perspective of $M$, we are
carrying out this construction for $\kappa$ many steps, but really
we are only carrying it out for $\beta$ many steps. In $V$, we build
$(t,f)$ by unioning the sequence and adding a limit level using the
branch of the $a_\xi$. It should be clear that $(t,f)$ forces that
$\dot{A}$ is bounded.
\end{proof}
\begin{corollary}
The generic $\kappa$-tree added by $\q$ is Souslin.
\end{corollary}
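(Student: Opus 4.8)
The plan is to derive the corollary from the Sealing Lemma by a standard genericity argument, once it is checked that $T$ is genuinely a $\kappa$-tree. Let $G\subseteq\q$ be $V$-generic and set $T=\dot T^G=\bigcup\{t\mid (t,f)\in G\}$. First I would observe that $T$ is a $\kappa$-tree. A routine density argument shows that for each $\alpha<\kappa$ the conditions of height exceeding $\alpha$ are dense: one can always end-extend a normal $\alpha+1$-tree to larger height, and at limit stages take the union of the trees together with the coordinate-wise union of the automorphism enumerations and then add a limit level, exactly as in the limit case of the Sealing Lemma construction. Hence $T$ has height $\kappa$. Moreover, since $\kappa$ is inaccessible, the levels of each condition-tree (a normal tree of height $<\kappa$) have size $<\kappa$, so every level of $T$ has size $<\kappa$.

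It then suffices to show that $T$ has no antichain of size $\kappa$, and for this it is enough to show that every maximal antichain is \emph{bounded}. Indeed, if a (maximal) antichain $A$ is contained in some $T\restriction\alpha$ with $\alpha<\kappa$, then, as $\kappa$ is regular and each level of $T$ is small, $T\restriction\alpha$ has size $<\kappa$, and hence $|A|<\kappa$. Since every antichain extends to a maximal one, ruling out large maximal antichains rules out all antichains of size $\kappa$.

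To prove boundedness I would argue as follows. Let $A\in V[G]$ be a maximal antichain of $T$, fix a $\q$-name $\dot A$ for it, and choose $p\in G$ forcing that $\dot A$ is a maximal antichain of $\dot T$. By the Sealing Lemma, the set $\{q\leq p\mid q\forces \dot A\text{ is bounded}\}$ is dense below $p$. As $p\in G$, genericity produces $q\in G$ forcing $\dot A$ to be bounded, so $A=\dot A^G$ is bounded in $V[G]$ and therefore has size $<\kappa$. Finally I would note that this also precludes a cofinal branch: the generic tree is normal, so splitting occurs cofinally, and a cofinal branch would yield an off-branch antichain of size $\kappa$, contrary to what we have shown. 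Thus $T$ is a $\kappa$-tree with no chain and no antichain of size $\kappa$, i.e. a $\kappa$-Souslin tree.

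I do not expect a genuine obstacle here, since all the combinatorial difficulty is already absorbed into the Sealing Lemma; the corollary is essentially a density-plus-genericity reformulation. The only points demanding a little care are the verification that $T$ is a bona fide $\kappa$-tree (height $\kappa$ with levels of size $<\kappa$), which relies on the inaccessibility of $\kappa$, and the passage from ``bounded maximal antichain'' to ``antichain of size $<\kappa$'', which relies on the regularity of $\kappa$.
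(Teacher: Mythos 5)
Your proof is correct and is exactly the intended derivation: the paper states the corollary without proof, the implicit argument being precisely your density-plus-genericity appeal to the Sealing Lemma, combined with the routine checks (using inaccessibility of $\kappa$) that $T$ is a $\kappa$-tree with small levels and that boundedness of maximal antichains rules out both large antichains and cofinal branches.
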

In the generic extension by $\q$, the Souslin tree $T$ it adds can
be viewed as a poset. Next, we will argue that forcing with
$\q*\dot{T}$ is forcing equivalent $Add(\kappa,1)$, where
$Add(\kappa,1)$ is the forcing to add a Cohen subset to $\kappa$.
Since every $<\kappa$-closed poset of size $\kappa$ is forcing
equivalent to $Add(\kappa,1)$, it suffices argue that $\q*\dot{T}$
has a dense subset that is $<\kappa$-closed.
\begin{lemma}
The forcing $\q*\dot{T}$ has a dense subset that is
$<\kappa$-closed.
\end{lemma}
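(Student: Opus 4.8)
The plan is to exhibit an explicit dense subset $D\subseteq\q*\dot{T}$ and verify that $D$ is $<\kappa$-closed; since every $<\kappa$-closed poset of size $\kappa$ is equivalent to $Add(\kappa,1)$, this is all that is needed. Recall that a condition in $\dot{T}$ is (a name for) a node of the generic tree, ordered so that higher nodes are stronger. I would let $D$ consist of the conditions of the form $((t,f),\check{a})$ in which $a$ is a node on the \emph{top} level of $t$; thus a condition in $D$ is one whose first coordinate has already decided the second coordinate to be a concrete node sitting at the maximal level of $t$. For density, take an arbitrary $((t,f),\dot{x})$ with $(t,f)\forces\dot{x}\in\dot{T}$. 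First strengthen $(t,f)$ to some $(t',f')$ deciding $\dot{x}=\check{a}$ for a concrete node $a$ of $t'$. If $a$ is not already on the top level of $t'$, choose by normality a top node $a^{*}$ of $t'$ extending $a$; since $a^{*}$ extends $a$ in the tree we have $\check{a^{*}}\leq\check{a}$ in $\dot{T}$, so $((t',f'),\check{a^{*}})\leq((t,f),\dot{x})$ lies in $D$.

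For closure, consider a descending sequence $\langle((t_\sigma,f_\sigma),\check{a_\sigma}):\sigma<\delta\rangle$ in $D$ with $\delta<\kappa$. The trees $t_\sigma$ form an end-extension chain, and because $\check{a_{\sigma'}}\leq\check{a_\sigma}$ for $\sigma<\sigma'$, the nodes $a_\sigma$ climb a single branch. Let $t^{-}=\bigcup_{\sigma<\delta}t_\sigma$, a normal tree of limit height $\gamma=\sup_\sigma\mathrm{ht}(t_\sigma)<\kappa$ (using that $\kappa$ is inaccessible), and let $b$ be the cofinal branch through the $a_\sigma$. The coordinate-wise unions $g_\xi=\bigcup_\sigma f_\sigma(\xi)$ are automorphisms of $t^{-}$, and $Aut(t^{-})$ acts transitively on each level: a node $s$ at level $\eta<\gamma$ equals $\pi(b(\eta))$ for some $\pi\in Aut(t_\sigma)$ with $\eta<\mathrm{ht}(t_\sigma)$, which extends coherently to an automorphism of $t^{-}$. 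I would then build the limit level by placing a single top node on each branch in the orbit $\{\rho[b]:\rho\in Aut(t^{-})\}$, obtaining a tree $t_\delta$ of height $\gamma+1$ with distinguished top node $a_\delta$, the top of $b=\mathrm{id}[b]$.

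Three checks remain, and these are exactly where the automorphism apparatus earns its keep. First, $t_\delta$ is normal: transitivity of $Aut(t^{-})$ on each level guarantees that every node below level $\gamma$ lies on some orbit branch and hence has an extension to the top. Second, $Aut(t_\delta)$ still acts transitively on the top level: given the tops of $\rho[b]$ and $\rho'[b]$, the automorphism $\rho'\circ\rho^{-1}\in Aut(t^{-})$ permutes the full-group orbit and so extends to an automorphism of $t_\delta$ carrying one top to the other. Third, each $g_\xi$ permutes the orbit and so extends to $Aut(t_\delta)$; I then take $f_\delta$ to be any enumeration of $Aut(t_\delta)$ extending $\xi\mapsto g_\xi$ coordinate-wise. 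With these, $(t_\delta,f_\delta)\leq(t_\sigma,f_\sigma)$ for every $\sigma$, and since $a_\delta$ extends every $a_\sigma$ and sits on the top level of $t_\delta$, the condition $((t_\delta,f_\delta),\check{a_\delta})$ is a lower bound in $D$, establishing $<\kappa$-closure.

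The hard part is precisely this limit stage: one must manufacture a limit level that simultaneously (i) keeps the designated branch $b$ alive with a genuine top node, (ii) preserves normality, and (iii) preserves transitivity of the automorphism group. In plain Souslin-tree forcing these demands conflict, since adding a top to $b$ alone destroys normality while adding tops to all cofinal branches destroys the normal-tree bookkeeping. Taking the top level to be exactly the $Aut(t^{-})$-orbit of $b$ reconciles all three, and this is the whole reason the conditions of $\q$ were augmented with transitively acting automorphism groups.
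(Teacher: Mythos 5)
Your proposal is correct and follows essentially the same route as the paper: the dense set of conditions whose third coordinate is a concrete node on the top level of the tree, with density by deciding the name and extending upward, and closure by taking unions and forming the limit level from the orbit of the branch through the $a_\sigma$ under the automorphisms. Your version is slightly more careful at the limit stage (you verify normality, transitivity, and the extendability of the coordinate-wise unions explicitly, and you take the full $\mathrm{Aut}(t^{-})$-orbit rather than just the images under the enumerated automorphisms, which makes the transitivity check cleaner), but the underlying idea is identical to the paper's.
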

\begin{proof}
Conditions in $\q*\dot{T}$ are triples $(t,f, \dot{a})$ where $t$ is
an $\alpha+1$-tree, $f$ is an enumeration of the automorphism group
of $t$, and $\dot{a}$ is a name for an element of $\dot{T}$. We will
argue that conditions of the form $(t,f,a)$ where $a$ is on the top
level of $t$ form a dense $<\kappa$-closed subset of $\q*\dot{T}$.
Start with any condition $(t_0,f_0,\dot{b}_0)$ and strengthen
$(t_0,f_0)$ to a condition $(t_1,f_1)$ deciding that $\dot{b}$ is
$b\in t_1$. Now we have
$(t_0,f_0,\dot{b})\geq(t_1,f_1,b)\geq(t_1,f_1,a)$ where $a$ is above
$b$ on the top level of $t_1$.  Thus the subset is dense. Suppose
$\gamma<\kappa$ and we have a descending $\gamma$-sequence
$(t_0,f_0,a_0)\geq
(t_1,f_1,a_1)\geq\ldots\geq(t_\xi,f_\xi,a_\xi)\geq\ldots$. To find a
condition that is above the sequence, we take unions of the first
two coordinates and make the limit level of the tree in the first
coordinate consist of images of the branch through $\la a_\xi\mid
\xi<\gamma\ra$ under the automorphisms in the second coordinate.
\end{proof}
Let $\p_\kappa$ be the Easton support iteration which adds a Cohen
subset to every inaccessible cardinal below $\kappa$. We will force
with the iteration $\p_\kappa*\dot{\q}*\dot{T}$. This is equivalent
to forcing with $\p_\kappa*Add(\kappa,1)$. The forcing argument will
rely crucially on the following standard theorem about preservation
of strong Ramsey cardinals after forcing.
\begin{theorem}\label{th:strongramseyindes}
If $\kappa$ is strongly Ramsey in $V$, then it remains strongly
Ramsey after forcing with $\p_\kappa*Add(\kappa,1)$.
\end{theorem}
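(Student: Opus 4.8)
The plan is to prove indestructibility by lifting a strongly Ramsey embedding through $\p_\kappa * Add(\kappa,1)$. Let $G*g$ be $V$-generic for this poset and fix $A\subseteq\kappa$ in $V[G*g]$. Since $\p_\kappa * Add(\kappa,1)$ has size $\kappa$, $A$ has a nice name codable as a subset of $\kappa$, so by strong Ramseyness in $V$ I would fix a $\kappa$-model $M$ containing this name together with $\p_\kappa$ and $V_\kappa$, and a $\kappa$-powerset preserving $j:M\to N$ with critical point $\kappa$. Here $N$ is $<\kappa$-closed in $V$ (the standard fact for ultrapowers by weakly amenable, countably complete $M$-ultrafilters), and since $M$ and $N$ agree on $\Power(\kappa)$ one has $V_\kappa^N=V_\kappa$ and $\kappa$ inaccessible in $N$. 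Thus $j(\p_\kappa)$ factors in $N$ as $\p_\kappa * Add(\kappa,1) * \dot\p_\tail$, where $\p_\tail$ is the Easton tail strictly between $\kappa$ and $j(\kappa)$; as the least inaccessible of $N$ above $\kappa$ exceeds $\kappa^+$, the poset $\p_\tail$ is ${\le}\kappa$-closed in $N$. By $\kappa$-cc and the agreement $V_\kappa^N=V_\kappa$, the first two factors are exactly the poset $\p_\kappa * Add(\kappa,1)$ we force with, so $G*g$ is $N$-generic for them.

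I would then lift $j$ in two steps. First, through $\p_\kappa$: its image $j(\p_\kappa)$ has $\p_\kappa$ as initial factor, for which I use $G$ itself (since $j\restrict\p_\kappa$ is the identity, $j''G=G$), and $Add(\kappa,1)$ as middle factor, for which I use our actual generic $g$. The third factor $\p_\tail$ needs a generic $G_\tail$ over $N[G*g]$, which I build inside $V[G*g]$ by diagonalization: as $|N|=\kappa$, the model $N[G*g]$ has size $\kappa$, so I enumerate its dense subsets of $\p_\tail$ in order type $\kappa$ and thread them by a descending $\kappa$-sequence, taking lower bounds at limits. These lower bounds exist because $\p_\kappa$ is $\kappa$-cc of size $\kappa$ and $Add(\kappa,1)$ is $<\kappa$-closed, so both preserve $<\kappa$-closure; hence $N[G*g]$ stays $<\kappa$-closed in $V[G*g]$ and $\p_\tail$ stays $<\kappa$-closed there. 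This yields a lift $M[G]\to N[G*g*G_\tail]$. Second, through the outer $Add(\kappa,1)$: its image is $Add(j(\kappa),1)$, with master condition $c=\bigcup g$, a single condition of domain $\kappa$; I build a generic $g_{top}$ below $c$ by the same diagonalization, using the closure of $Add(j(\kappa),1)$. Since $j(p)=p$ and $c\leq p$ for every $p\in g$, we have $j''g\subseteq g_{top}$, so the lifting criterion is met and the full lift $\bar j:M[G*g]\to N^*$ exists, with $N^*=N[G*g*G_\tail*g_{top}]$.

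It remains to check that $\bar j$ genuinely witnesses strong Ramseyness. That $M[G*g]$ is a $\kappa$-model containing $A$ in $V[G*g]$ follows exactly as for $N$, since $M$ is $<\kappa$-closed in $V$ and $\p_\kappa * Add(\kappa,1)$ preserves this. The crucial point, and what separates this from a mere weak-compactness lift, is that $\bar j$ must be $\kappa$-powerset preserving. For that I would argue that the extra factors $\p_\tail * Add(j(\kappa),1)$ are ${\le}\kappa$-closed and hence add no new subsets of $\kappa$, giving $\Power(\kappa)^{N^*}=\Power(\kappa)^{N[G*g]}$; and since $\Power(\kappa)^M=\Power(\kappa)^N$ while any subset of $\kappa$ in either extension is read off a nice name of size $\kappa$ over the poset $\p_\kappa * Add(\kappa,1)$ shared by $M$ and $N$, one gets $\Power(\kappa)^{M[G*g]}=\Power(\kappa)^{N[G*g]}$. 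Combining the two equalities yields $\Power(\kappa)^{M[G*g]}=\Power(\kappa)^{N^*}$, so $\bar j$ is $\kappa$-powerset preserving and $\kappa$ is strongly Ramsey in $V[G*g]$.

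The main obstacle I anticipate is manufacturing the tail generic $G_\tail$ (and the master-condition generic $g_{top}$) inside the extension $V[G*g]$ rather than in $V$, which forces one to track that the preparation $\p_\kappa$ preserves the relevant $<\kappa$-closure of $N$ and of $\p_\tail$. Once that closure bookkeeping is in hand, the genuinely Ramsey-flavored requirement, namely $\kappa$-powerset preservation of $\bar j$, follows cleanly from the ${\le}\kappa$-closure of the tail together with the nice-name count, which is precisely why the preparation is arranged so that $j(\p_\kappa)$ has a highly closed tail.
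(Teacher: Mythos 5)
The paper does not actually present a proof of this theorem\emdash it defers it to \cite{gitman:ramseyindes} with only the remark that it ``uses standard techniques for lifting embeddings''\emdash and your argument is precisely that standard two-step lifting argument (pass to the ultrapower so $N$ is ${<}\kappa$-closed, factor $j(\p_{\kappa})$ as $\p_{\kappa}\ast Add(\kappa,1)\ast\p_{\tail}$, build the tail generic and the top generic below the master condition $\bigcup g$ by diagonalization over the $\kappa$-sized, suitably closed models, and get $\kappa$-powerset preservation from the ${\leq}\kappa$-distributivity of the extra factors together with nice-name coding over the common poset), which is correct. One small slip worth noting: since $|N|=\kappa$, every ordinal of $N$\emdash in particular the least $N$-inaccessible above $\kappa$\emdash lies below the true $\kappa^{+}$, so it does not ``exceed $\kappa^{+}$'' in $V$; but all your argument needs is that the tail has no nontrivial stages at or below $\kappa$, which already makes $\p_{\tail}$ ${\leq}\kappa$-closed in $N[G\ast g]$, so nothing is affected.
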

The proof uses standard techniques for lifting embeddings and will
appear in \cite{gitman:ramseyindes}.

 Now we have all the machinery
necessary to produce a model where $\kappa$ is virtually Ramsey but
not weakly compact.
 \begin{theorem}
If $\kappa$ is a strongly Ramsey cardinal, then there is a forcing
extension, in which $\kappa$ is virtually Ramsey, but not weakly
compact.
\end{theorem}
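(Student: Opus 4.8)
The plan is to force with the iteration $\p_\kappa*\dot{\q}*\dot{T}$, which by the preceding lemma is forcing equivalent to $\p_\kappa*Add(\kappa,1)$, and then to argue that in the resulting extension $\kappa$ is virtually Ramsey but not weakly compact. The key structural feature is that the generic Souslin tree $T$ added by $\q$ destroys weak compactness (an inaccessible carrying a Souslin tree cannot be weakly compact, since weak compactness implies the tree property), so the first order of business is simply to note that $\kappa$ fails to be weakly compact in the full extension by $\p_\kappa*\dot{\q}*\dot{T}$.

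The main work is showing $\kappa$ remains virtually Ramsey. Here is where I would exploit the two-step structure. First, by Theorem \ref{th:strongramseyindes}, $\kappa$ is still strongly Ramsey after forcing with $\p_\kappa*Add(\kappa,1)$, and since strongly Ramsey cardinals are Ramsey and hence virtually Ramsey, $\kappa$ is virtually Ramsey in $V^{\p_\kappa*Add(\kappa,1)}$. The crucial point is that $\q*\dot{T}$ has a dense $<\kappa$-closed subset and is therefore forcing equivalent to $Add(\kappa,1)$ over $V^{\p_\kappa}$, so the model $V^{\p_\kappa*\dot{\q}*\dot{T}}$ in which $\kappa$ is virtually Ramsey is the \emph{same} as $V^{\p_\kappa*\dot{\q}}[T]$, obtained by further forcing with $T$. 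Thus I would situate the desired model $W=V^{\p_\kappa*\dot{\q}}$ (where $\kappa$ is not weakly compact) as an intermediate extension, and then invoke Lemma \ref{lem:vrindes}: the forcing $T$ from $W$ up to $W[T]$ is $<\kappa$-distributive (a $\kappa$-Souslin tree is $<\kappa$-distributive) and stationary preserving (Souslin trees are $\kappa$-c.c., hence preserve stationary subsets of $\kappa$), so virtual Ramseyness of $\kappa$ in $W[T]$ reflects back down to $W$.

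I would then assemble the pieces: $\kappa$ is virtually Ramsey in $W[T]=V^{\p_\kappa*\dot{\q}*\dot{T}}$ because that model coincides with $V^{\p_\kappa*Add(\kappa,1)}$ where strong Ramseyness (hence virtual Ramseyness) is preserved; by Lemma \ref{lem:vrindes} applied to the tree forcing $T$, virtual Ramseyness descends to $W=V^{\p_\kappa*\dot{\q}}$; and in $W$ the cardinal $\kappa$ carries a Souslin tree and so is not weakly compact. The model $W$ is the required extension. The main obstacle I anticipate is verifying the two hypotheses of Lemma \ref{lem:vrindes} for the tree $T$ as a forcing notion, namely $<\kappa$-distributivity and stationary preservation; distributivity follows from $T$ being a $\kappa$-Souslin tree (so all its levels below $\kappa$ are small and no new $<\kappa$-sequences of ordinals are added), while stationary preservation follows from the $\kappa$-chain condition, which is automatic for Souslin trees. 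A secondary subtlety is confirming that $\q$ itself does not accidentally resurrect or preserve weak compactness before the tree is extracted as a forcing — but this is exactly why $\q$ was designed to add a Souslin tree, which manifestly witnesses the tree property's failure at $\kappa$.
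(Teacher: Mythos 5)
Your proposal is correct and follows essentially the same route as the paper's proof: force with $\p_\kappa*\dot{\q}*\dot{T}$ (equivalent to $\p_\kappa*Add(\kappa,1)$, so strong Ramseyness and hence virtual Ramseyness survive to the top model), then pull virtual Ramseyness down to the intermediate extension by $\p_\kappa*\dot{\q}$ via Lemma \ref{lem:vrindes} using the $<\kappa$-distributivity and $\kappa$-c.c.\ of the Souslin tree forcing, and observe that the Souslin tree kills weak compactness there. No gaps; the intermediate model is exactly the one the paper takes.
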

\begin{proof}
Let $\kappa$ be a strongly Ramsey cardinal and
$G*T*B\subseteq\p_\kappa*\dot{\q}*\dot{T}$ be $V$-generic. Since
$\q$ adds a Souslin tree, $\kappa$ is not weakly compact in the
intermediate extension $V[G][T]$. But by Theorem
\ref{th:strongramseyindes}, the strong Ramseyness of $\kappa$ is
resurrected in $V[G][T][B]$. Recall that forcing with a Souslin tree
is $<\kappa$-distributive. The Souslin tree forcing is also
$\kappa$-cc and hence stationary preserving. So by Lemma
\ref{lem:vrindes}, we conclude that $\kappa$ remains virtually
Ramsey in $V[G][T]$. Thus, in $V[G][T]$, the cardinal $\kappa$ is
virtually Ramsey but not weakly compact, and hence not Ramsey.
\end{proof}
 We showed starting from a strongly Ramsey cardinal that it is possible to have virtually Ramsey
cardinals that are not Ramsey, thus separating the two notions. The
following questions are still open.
\begin{question}
Can we separate Ramsey and virtually Ramsey cardinals starting with
just a Ramsey cardinal?
\end{question}
\begin{question}
Are virtually Ramsey cardinals strictly weaker than Ramsey
cardinals?
\end{question}
\begin{question}
Are virtually Ramsey cardinals downward absolute to $K$?
\end{question}
\bibliographystyle{alpha}
\bibliography{gitmanbib,logicbib}
\end{document}